\documentclass[a4paper, 11pt]{amsart} 
\usepackage{amssymb, latexsym, amsrefs, amsmath, amsthm, color, 
cancel}
\usepackage{a4wide}
\usepackage[mathscr]{euscript} \DeclareMathAlphabet{\euls}{U}{eus}{m}{n}

\newtheorem{theorem}[equation]{Theorem}

\newtheorem{lemma}[equation]{Lemma}

\newtheorem{corollary}[equation]{Corollary}

\theoremstyle{definition}
\newtheorem{definition}[equation]{Definition}

\newtheorem{question}[equation]{Question}
\newtheorem{questions}[equation]{Questions}

\theoremstyle{remark}
\newtheorem{remark}[equation]{Remark}
\newtheorem{remarks}[equation]{Remarks}
\newtheorem{notation}[equation]{Notation}

\numberwithin{equation}{section}

\newcommand{\e}{\varepsilon}
\newcommand{\g}{\mathfrak{g}}

\begin{document}
\title[$AS$-Gorenstein Hopf algebras]{Report on $AS$-Gorenstein Hopf algebras}

\author[K. A. Brown]{Ken A. Brown}
\address{School of Mathematics and Statistics\\
University of Glasgow\\ Glasgow G12 8QQ\\
Scotland}
\email{ken.brown@glasgow.ac.uk}


\date{\today}
 
\begin{abstract} This is a review of progress on the question whether noetherian Hopf algebras always have finite injective dimension and related good homological properties. As well as discussing in detail the main results giving positive answers for particular classes of Hopf algebras, some consequences of such positive answers are also described. Full definitions and references are included, also sketches of some proofs.  A considerable number of open questions are listed, additional to the original question, which itself remains open after 30 years. 
 \end{abstract}
 \maketitle
\tableofcontents

\bigskip

\section{Introduction}\label{intro}

The primary purpose of this paper is to report on the current status of the following question, which is a more precise version of one first asked in \cite[$\S$3.1, Question A and remark following]{Br97} and \cite[$\S$1.14]{BG97}\footnote{This question is sometimes now referred to as the ``Brown-Goodearl conjecture", although it is simply stated as a question in both these papers}. The terms used in the question are explained in Definition \ref{xxdefn2.1}.

\begin{question}\label{xxqn1.1} Let $\mathbb{K}$ be a field $H$ be a noetherian Hopf $\mathbb{K}$-algebra. Is $H$ Auslander-Gorenstein and $AS$-Gorenstein?
\end{question}

 A related question, also discussed below, is: when is  $H$ $GK$-Cohen Macaulay? As well as reviewing current answers to these questions for various classes of Hopf algebras, we also give an account of the consequences flowing from positive answers. A number of open questions are listed throughout.

The layout is as follows. $\S$\ref{xxsec2} contains the basic definitions required and some relations between them. The heart of $\S$\ref{xxsec3} is Theorem \ref{xxthm3.3}, providing a list of classes of Hopf algebras for which positive answers to Question \ref{xxqn1.1} have been obtained, with references to the original papers and indications of some of the proofs. Then Theorem \ref{xxthm3.9} focusses on the class of affine noetherian Hopf algebras satisfying a polynomial identity, describing structural consequences which flow from the homological properties. As well as being significant for PI Hopf algebras themselves, these consequences may suggest possible features possessed by, but not yet proven for, other classes of noetherian Hopf algebras. In $\S$\ref{subsectlow} a similar review is given for affine noetherian Hopf algebras of Gelfand-Kirillov dimension at most 2. Finally, $\S$\ref{xxsec4} surveys some applications of the positive homological results which have so far been obtained.

\bigskip

\begin{notation}\label{xxnotate} Throughout, $\mathbb{K}$ denotes a field; further conditions will be imposed on $\mathbb{K}$ as needed. Given a $\mathbb{K}$-algebra $A$ and a left or right $A$-module $M$ the \emph{Gelfand-Kirillov dimension} of $M$ is denoted by $\mathrm{GKdim} (M)$; our basic reference for GK-dimension is \cite{KL00}. The projective dimension of a (right or left) $A$-module $M$ is denoted  $\mathrm{prdim}(M)$, and the global [resp. injective] dimension of an algebra $A$ is denoted by $\mathrm{gldim}(A)$ [resp. $\mathrm{injdim}(A)]$. A Hopf $\mathbb{K}$-algebra $H$ is assigned the coproduct $\Delta$, antipode $S$ and counit $\varepsilon$, with $\Delta (h)$ denoted by $\sum h_1 \otimes h_2$ for $h \in H$. The symbols ${}_{\e}\mathbb{K}$ and $\mathbb{K}_{\e}$ denote (respectively) the trivial left and right $H$-modules. The unadorned tensor product symbol $\otimes$ always means $\otimes_{\mathbb{K}}$.
\end{notation}

\bigskip

\section{Basic definitions and their interplay}\label{xxsec2} 

We'll focus thoughout on noetherian Hopf $\mathbb{K}$-algebras. It will frequently be necessary to assume that such an algebra is also \emph{affine}, meaning finitely generated as a $\mathbb{K}$-algebra, but it is a moot point whether this additional hypothesis is necessary. That is, it is currently not known whether the answer to the following question is ``yes''.

\begin{question}\label{affineqn} Is every noetherian Hopf $\mathbb{K}$-algebra affine?
\end{question} 

The answer is ``yes'' for a Hopf algebra which is pointed, commutative or cocommutative, thanks to Molnar \cite{Mo75} for the last 2 cases and to Goodearl and Zhang \cite{GZ17} for the first\footnote{In 
\cite{GZ17} there is a global assumption that the ground field is algebraically closed of characteristic 0, but neither hypothesis is needed for their proof of this result - see \cite[Theorem 6.4(3)]{GZ17}.}.

\begin{definition}\label{xxdefn2.1} Let $A$ be a noetherian $\mathbb{K}$-algebra.
\begin{enumerate}
\item[(i)]  If the injective dimensions of ${}_A A$ and $A_A$ are finite then $A$ is said to have \emph{finite injective dimension}. In this case these integers are equal by \cite{Za69}, and are denoted by $\mathrm{injdim}(A)$. 
\item[(ii)] $A$ is \emph{regular} if it has finite global dimension; that is, $\mathrm{gldim}(A) < \infty$. Right global dimension always equals left global dimension \cite[Exercise
4.1.1]{We94}; and when $\mathrm{gldim}(A)$ is finite, $\mathrm{gldim}(A) = \mathrm{injdim}(A)$.
\item[(iii)] $A$ is \emph{Artin–Schelter Gorenstein}, usually abbreviated to $AS$–\emph{Gorenstein}, if
\begin{itemize}
\item[(AS1)] $\mathrm{injdim}({}_A A) = d < \infty$;
\item[(AS2)] for every simple left $A$-module $V$ with $\mathrm{dim}_{\mathbb{K}}(V) < \infty$, $\mathrm{Ext}^d_A (V, {}_A A)$ is a non-zero finite dimensional simple (right) $A$-module, and $\mathrm{Ext}_A^i(V, {}_A A) = 0$ for all $i \neq d$;
\item[(AS3)] the right $A$-module versions of (AS1) and (AS2) hold.
\end{itemize}
\item[(iv)]  If $A$ is $AS$-Gorenstein and $\mathrm{gldim}(A) < \infty$, then $A$ is called \emph{Artin–Schelter regular}, usually shortened to $AS$-\emph{regular}.
\item[(v)] The \emph{grade} of a left $A$-module $M$ is
$$ j_A(M) \; := \; \mathrm{inf}\{n\, : \mathrm{Ext}_A^n (M,{}_A A) \neq 0 \} \; \in \; \mathbb{N} \cup \infty.$$
An analogous definition applies to right $A$-modules.
\item[(vi)] $A$ is \emph{GK-Cohen Macaulay} if $\mathrm{GKdim}(A) < \infty$ and for each non-zero finitely generated left or right $A$-module $M$
\begin{equation}\label{xxeqn2.2} \mathrm{GKdim}(M) \, + \, j_A (M) \; = \; \mathrm{GKdim}(A). 
\end{equation}
\item[(vii)] A left or right $A$-module $M$ satisfies the \emph{Auslander condition} if for all $i \geq 0$ and all non-zero submodules $N$ of $\mathrm{Ext}^i_A (M, A)$, 
$$j_A(N) \; \geq \; i.$$
\item[(viii)] $A$ is \emph{Auslander Gorenstein} if $\mathrm{injdim}(A) < \infty$ and all its finitely generated modules $M$ satisfy the Auslander condition. It is \emph{Auslander regular} if in addition $\mathrm{gldim}(A) < \infty$.
\end{enumerate}
\end{definition}

\medskip

\begin{remarks}\label{xxrems2.3}(i) The terminology in Definition \ref{xxdefn2.1}(iii), (iv) was introduced in the setting of a connected $\mathbb{N}$-graded $\mathbb{K}$-algebra $A$ by Levasseur in \cite[$\S$6.1]{Lev92}, motivated by Artin and Schelter, who had used in the same setting the term \emph{regular} in \cite{AS87} to describe what, following Levasseur's lead, was subsequently called a (graded) $AS$-regular algebra. In the $\mathbb{N}$-graded setting only the unique graded simple module $\mathbb{K}$ features in condition (AS2) of (iii), and in its right-hand version. The present more general form of these definitions is modelled on the usage in \cite[Definition 3.1]{WZ02}.

\medskip 

\noindent(ii) The introduction of the Auslander condition in a noncommutative noetherian setting originates with Bjork and Ekstrom \cite{Bj87, Bj89, BjEk90}.

\medskip

\noindent(iii) For an affine $\mathbb{K}$-algebra $R$ which is a commutative domain,
$$\mathrm{injdim}(R) < \infty \; \Longleftrightarrow \; R \textit{ is Auslander Gorenstein } \Longleftrightarrow\; R \textit{ is } AS\textit{-Gorenstein } $$
by \cite{Ba63}. Parallel equivalences hold when $\mathrm{gldim}(R)$ is finite. Moreover, such an algebra $R$ is Cohen Macaulay in the sense of the familiar commutative definition \cite[$\S$18.2]{Ei95} if and only if  (\ref{xxeqn2.2}) is satisfied by $R$ and its finitely generated modules.

\medskip
\noindent(iv) If $M$ is a non-zero finitely generated $A$-module then $j_A (M) \leq \mathrm{injdim}(A)$ by \cite[Remark 2.2(1)]{Lev92}.

\medskip

\noindent(v) Assume that the noetherian $\mathbb{K}$-algebra $A$ is Auslander Gorenstein with $\mathrm{injdim}(A) = d$ and let $\mathcal{M}$ denote the category of finitely generated left $A$-modules. Then Levasseur proved \cite[Proposition 4.5]{Lev92} that the map 
$$ \rho : \mathcal{M} \longrightarrow  \{0, 1, \ldots , d\} \cup \{-\infty\}  :  M \mapsto d - j_A (M)$$
is an exact partitive\footnote{A dimension function $\rho$ on $\mathcal{M}$ is \emph{exact} if $\rho (M) = \mathrm{max}\{\rho (B),\, \rho (C) \}$ for all short exact sequences $0 \longrightarrow B \longrightarrow M \longrightarrow C \longrightarrow 0$ in $\mathcal{M}$. And $\rho$ is \emph{partitive} if, whenever $M = M_0 \supseteq M_1 \supseteq \cdots M_i \supseteq M_{i+1}\supseteq \cdots $ is a descending chain in $\mathcal{M}$ with $\rho (M) = t$, then $\rho (M_i/M_{i+1}) <t$ for all $i \gg 0$.} 
dimension function. Thus the possession of the $GK$-Cohen Macaulay property by an Auslander Gorenstein algebra $A$ simply means that $\rho$ coincides with $GK$-dimension on finitely generated $A$-modules. Note that it is not at present known whether $GK$-dimension is always exact for noetherian algebras, so the known exactness of $\rho$ ensures that this is the case for an Auslander Gorenstein $GK$-Cohen Macaulay algebra. And conversely, the facts \cite[Corollary 5.4, Proposition 5.6]{KL00} that $GK$-dimension is symmetric and ideal invariant\footnote{A dimension function $\delta$ is \emph{symmetric} if its value is the same on the right or left of any $A$-bimodule which is finitely generated on both sides; and it is \emph{ideal invariant} if $\delta(M) \leq \delta(M \otimes_A I)$ for all finitely generated $A-modules$ and ideals $I$.} imply that these properties are shared by $\rho$ when $A$ is Auslander Gorenstein and $GK$-Cohen Macaulay.

\medskip

\noindent(vi)  One can also define the Krull-Cohen Macaulay condition for $A$ by replacing the Gelfand-Kirillov dimension by the Krull dimension throughout Definition \ref{xxdefn2.1}(vi), and the obvious version of (v) applies. See \cite[Chapter 6]{McCR87} for background on Krull dimension.

\end{remarks} 

\bigskip

When applied to a Hopf $\mathbb{K}$-algebra $H$ the above definitions enjoy some important simplifications and inter-relationships, which we now summarise.

\begin{lemma}\label{xxlem2.4} Let $H$ be a noetherian Hopf $\mathbb{K}$-algebra. In parts (ii) and (iii) assume that $\mathrm{injdim}(H) = d < \infty$.
\begin{enumerate}
\item[(i)] $\mathrm{prdim}({}_{\varepsilon}\mathbb{K}) = \mathrm{gldim}(H)$, and if the global dimension is finite, then $\mathrm{prdim}(V) = \mathrm{gldim}(H)$ for every non-zero finite dimensional $H$-module $V$.
\item[(ii)] If $ \mathrm{dim}_{\mathbb{K}}(\mathrm{Ext}_H^i({}_{\e}\mathbb{K}, {}_H H)) < \infty$ and $\mathrm{dim}_{\mathbb{K}}(\mathrm{Ext}_H^i(\mathbb{K}_{\e},  H_H)) < \infty $
for all $i$, $\qquad \qquad$and $\mathrm{Ext}_H^d({}_{\e}\mathbb{K}, {}_H H) \neq 0$, then $H$ is $AS$-Gorenstein.

\item[(iii)] If $H$ is $AS$-Gorenstein then
$$  \mathrm{dim}_{\mathbb{K}}(\mathrm{Ext}_H^d({}_{\e}\mathbb{K}, {}_H H)) \; = \; \mathrm{dim}_{\mathbb{K}}(\mathrm{Ext}_H^d(\mathbb{K}_{\e},  H_H)) \; = \; 1.$$

\item[(iv)] If the antipode $S$ of $H$ is bijective then the requirements of Definition \ref{xxdefn2.1}(i),(iii),(vi), (vii),(viii) only need to be imposed on the left or the right, not on both sides. 

\item[(v)] If $H$ is Auslander Gorenstein (resp. Auslander
regular) and $GK$-Cohen Macaulay, then it is $AS$–Gorenstein (resp. $AS$-regular).

\end{enumerate}
\end{lemma}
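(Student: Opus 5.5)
The whole plan rests on one standard device: for a left $H$-module $M$ and a finite dimensional left $H$-module $V$, the diagonal action on $V\otimes M$ makes $H\otimes V$ (with $H$ acting diagonally) isomorphic to the free module $H^{\dim V}$. The isomorphism is $h\otimes v\mapsto \sum h_1\otimes h_2 v$, whose inverse uses the antipode. The same device gives $\mathrm{Hom}_H(V\otimes M,N)\cong \mathrm{Hom}_H(M,V^*\otimes N)$, and the companion identification $V\otimes M\cong M\otimes V$ up to twisting.

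\textbf{Part (i).} Take a projective resolution $P_\bullet\to {}_{\e}\mathbb{K}$ and tensor it over $\mathbb{K}$ with an arbitrary module $M$. Since $P\otimes M$ is projective whenever $P$ is (by the device above, applied with $P$ free), we get $\mathrm{prdim}(M)\le\mathrm{prdim}({}_{\e}\mathbb{K})$. Hence $\mathrm{gldim}(H)=\mathrm{prdim}({}_{\e}\mathbb{K})$.

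For a nonzero finite dimensional $V$, note that ${}_{\e}\mathbb{K}$ is a direct summand of $V^*\otimes V$ via the coevaluation and evaluation maps. Up to the left/right duality conventions, this gives $\mathrm{prdim}({}_{\e}\mathbb{K})\le \mathrm{prdim}(V^*\otimes V)\le\mathrm{prdim}(V)$. The reverse inequality is the first part of (i).

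\textbf{Parts (ii) and (iii).} For finite dimensional $V$, the adjunction gives
\[\mathrm{Ext}^i_H(V,H)\cong \mathrm{Ext}^i_H({}_{\e}\mathbb{K}, V^*\otimes H)\cong \mathrm{Ext}^i_H({}_{\e}\mathbb{K},H)^{\oplus \dim V}.\]
The second isomorphism holds because $V^*\otimes H\cong H^{\dim V}$. So finite dimensionality and the vanishing for $i\neq d$ transfer from ${}_{\e}\mathbb{K}$ to every finite dimensional $V$.

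To get (AS2) I will first show that $E:=\mathrm{Ext}^d_H({}_{\e}\mathbb{K},H)$ is one dimensional. The plan is to use the spectral-sequence/duality argument for modules with finite dimensional Ext over a ring of finite injective dimension, in the spirit of \cite{WZ02}. Dualizing a projective resolution of ${}_{\e}\mathbb{K}$ yields a complex whose only cohomology is $E$, in degree $d$. Dualizing again via $\mathrm{Hom}_H(-,H)$ recovers ${}_{\e}\mathbb{K}$. Hence $\mathrm{Ext}^d_{H}(E,H_H)\cong{}_{\e}\mathbb{K}$, and the same argument on the right shows $E$ is the dual of a one-dimensional module. I expect this double-duality step to be the main obstacle; it needs $d=\mathrm{injdim}$ and the finite dimensionality on both sides.

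Once $E$ is known to be one-dimensional, $\mathrm{Ext}^d(V,H)$ is a twist of the dual $V^*$. It is therefore simple exactly when $V$ is, which gives (AS2). Part (iii) follows from the same double-duality computation, since the AS-Gorenstein hypothesis supplies the vanishing and finiteness.

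\textbf{Parts (iv) and (v).} For (iv), a bijective antipode $S$ gives an isomorphism $H\cong H^{op}$ of algebras via $S$. Transporting the left conditions along it yields the right ones, since each condition is defined intrinsically in terms of the ring.

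For (v), Cohen–Macaulayness forces $j(V)=\mathrm{GKdim}(H)$ for every finite dimensional $V$, because $\mathrm{GKdim}(V)=0$. By Remark~\ref{xxrems2.3}(iv) this gives $j(V)\le d$. The Auslander condition, via the usual argument of Levasseur, shows that $\mathrm{Ext}^i(V,H)$ vanishes for $i>j(V)$ when $\mathrm{GKdim}(V)=0$. Hence $\mathrm{Ext}$ is concentrated in degree $d$ and is finite dimensional. Part (ii), together with (i) in the regular case, then completes the proof.
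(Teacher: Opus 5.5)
\textbf{Verdict.} The first half of (i), part (iv), and the overall shape of (v) are fine; the first half of (i) is the paper's own tensoring argument. There are, however, two genuine gaps, in the second half of (i) and in (ii).

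\textbf{Second half of (i).} The module ${}_{\varepsilon}\mathbb{K}$ is in general \emph{not} a direct summand of $V^*\otimes V$. Any composite of module maps ${}_{\varepsilon}\mathbb{K}\to V^*\otimes V\to{}_{\varepsilon}\mathbb{K}$ is a (twisted) trace, and it can vanish. For cocommutative $H$ over an algebraically closed field of characteristic $p$, and $V$ simple with $p\mid\dim V$, every such composite is a scalar multiple of $\dim V=0$. An example is the $p$-dimensional Steinberg module of $U(\mathfrak{sl}_2)$, an algebra of global dimension $3$. Notice that your argument never uses $\mathrm{gldim}(H)<\infty$, yet Remark~\ref{xxrems2.5}(i) shows the conclusion is false without it: for a simple $\mathbb{K}D$-module $V$ of dimension $2$, your chain would give $\infty=\mathrm{prdim}({}_{\varepsilon}\mathbb{K})\le\mathrm{prdim}(V)=1$. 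The repair uses finiteness at exactly this point. Coevaluation embeds ${}_{\varepsilon}\mathbb{K}$ in $V\otimes V^*$. Since $\mathrm{Ext}^{d+1}_H(-,N)=0$ for $d=\mathrm{gldim}(H)$, the restriction map $\mathrm{Ext}^d_H(V\otimes V^*,N)\to\mathrm{Ext}^d_H({}_{\varepsilon}\mathbb{K},N)$ is surjective. Choosing $N$ with non-zero target gives $\mathrm{prdim}(V\otimes V^*)=d$, and $\mathrm{prdim}(V\otimes V^*)\le\mathrm{prdim}(V)$ by your tensoring device.

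\textbf{Part (ii).} The hypotheses give only finite dimensionality of every $\mathrm{Ext}^i$ and non-vanishing in degree $d$. The vanishing for $i\neq d$ is the heart of the matter, but you take it as given (``the vanishing for $i\neq d$ transfer'', ``a complex whose only cohomology is $E$, in degree $d$''). What is missing is a corner argument in the double-Ext spectral sequence $E_2^{p,q}=\mathrm{Ext}^p_{H^{op}}(\mathrm{Ext}^q_H({}_{\varepsilon}\mathbb{K},H),H)\Rightarrow{}_{\varepsilon}\mathbb{K}$, with abutment in total degree $p-q=0$. This spectral sequence is available because $H$ is noetherian of finite injective dimension on both sides.
\begin{itemize}
\item The right-hand version of your adjunction gives $\dim\mathrm{Ext}^p_{H^{op}}(W,H)=\dim W\cdot\dim\mathrm{Ext}^p_{H^{op}}(\mathbb{K}_{\varepsilon},H)$ for finite dimensional $W$.
\item Let $r$ be least with $\mathrm{Ext}^r_{H^{op}}(\mathbb{K}_{\varepsilon},H)\neq0$; it exists because the abutment is non-zero. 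Then $E_2^{r,d}\neq0$.
\item Since $E_2^{p,q}=0$ unless $p\ge r$ and $q\le d$, no non-zero differential enters or leaves $E_2^{r,d}$. So it survives, forcing $r=d$, and the right-hand Ext is concentrated in degree $d$.
\item The mirror spectral sequence then collapses. This forces the left-hand Ext into degree $d$ as well, and gives $\dim\mathrm{Ext}^d_H({}_{\varepsilon}\mathbb{K},H)\cdot\dim\mathrm{Ext}^d_{H^{op}}(\mathbb{K}_{\varepsilon},H)=1$, which is what actually proves (iii).
\end{itemize}

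\textbf{Two smaller repairs.}
\begin{itemize}
\item \emph{Simplicity in (AS2).} The twist of $V^*$ that arises involves $S^2$: it is by $S^2\circ\tau^{\ell}_{\chi}$, compare Theorem~\ref{dualisingthm}(a). Bijectivity of $S$ is only known a posteriori (Theorem~\ref{xxthmbijantipode}), so ``simple exactly when $V$ is'' is not yet justified. Derive simplicity instead from exactness: $\mathrm{Ext}^d(-,H)$ is then an exact duality on finite dimensional modules, so a proper non-zero submodule $U$ of $\mathrm{Ext}^d(V,H)$ would give the proper non-zero quotient $\mathrm{Ext}^d(U,H)$ of $V$.
\item \emph{Part (v).} You only obtain concentration in degree $j(V)=\mathrm{GKdim}(H)\le d$, but you need equality. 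If $N=\mathrm{Ext}^i(M,H)\neq0$, the Auslander and Cohen--Macaulay conditions give $i\le j(N)=\mathrm{GKdim}(H)-\mathrm{GKdim}(N)$. This yields $d\le\mathrm{GKdim}(H)$. Taking $i=d$ and $M=V$, it also yields $\mathrm{GKdim}(\mathrm{Ext}^d(V,H))=0$, which is the finite dimensionality you asserted without proof.
\end{itemize}
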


\begin{proof}\noindent(i) As already noted, the right and left global dimensions of $H$ are equal, \cite[Exercise 4.1.1]{We94}. The first equality was noted in \cite[$\S$2.4]{LL95}, a consequence of the fact that if ${}_H M$ is any $H$-module then $M\otimes_{\mathbb{K}} H$ is a free $H$-module by the Fundamental Theorem of Hopf Modules, \cite[Theorem 1.9.4]{Mo93}\footnote{Alternatively and more directly: $\mathrm{Ext}_H^i(M \otimes H, U) \cong \mathrm{Ext}_H^i(M, \mathrm{Hom}_{\mathbb{K}}(H,U))$ for all $i \geq 0$ and all $H$-modules $U$ by \cite[Proposition 1.3]{BG97}, and $\mathrm{Hom}_{\mathbb{K}}(H,U)$ is an injective $H$-module because the functor $\mathrm{Hom}_{\mathbb{K}}(H,-)$, being the right adjoint of the (exact) forgetful functor, preserves the injectivity of ${}_{\mathbb{K}}U$.} , so one can tensor a projective resolution of ${}_{\varepsilon}\mathbb{K}$ by $M$ to get a resolution of $M$. For the second equality see \cite[Proposition 1.6]{BG97}.

\medskip

\noindent(ii),(iii) These follow from \cite[Lemma 3.2]{BZ08} and its proof.
\medskip

\noindent(iv) This is an easy exercise.
\medskip

\noindent(v) This is \cite[Lemma 6.1]{BZ08}.
\end{proof}

\medskip
It has become apparent that the one-dimensional $H$-bimodules highlighted in Lemma \ref{xxlem2.4}(iii) are fundamental to the structure and behaviour of $AS$-Gorenstein Hopf algebras.  Keeping the notation and hypotheses of the lemma and following \cite[Definition 1.1]{LWZ07}, we write
\begin{equation}\label{integraleqn}  \int_H^{\ell} \, := \, \mathrm{Ext}_H^d({}_{\e}\mathbb{K}, {}_H H) \, \textit{ and } \, \int_H^r \, := \,  \mathrm{Ext}_H^d(\mathbb{K}_{\e},  H_H),
\end{equation}
and call these bimodules respectively the \emph{left integral} and \emph{right integral} of $H$. This generalises the long-standing usage when $H$ is a finite dimensional Hopf algebra, as in \cite[Definition 2.1.1]{Mo93}.

\medskip

\begin{remarks}\label{xxrems2.5}(i) The second conclusion in Lemma \ref{xxlem2.4}(i) is in general false if $\mathrm{gldim}(H) = \infty$. Consider, for example, the group algebra $\mathbb{K}D$, where $\mathbb{K}$  has characteristic 2 and $D = \langle a, b\, : \, b^2 = 1, bab = a^{-1} \rangle$, the infinite dihedral group. Then $\mathrm{gldim}(\mathbb{K}D) = \infty$, but every simple $\mathbb{K}D$-module $V$ has $\mathrm{prdim}(V) = 1$, apart from the 4 with $\mathbb{K}$-dimension 1.
\medskip

\noindent(ii) Lemma \ref{xxlem2.4}(iv) immediately suggests the 

\begin{question}\label{xxqn2.6} Is the antipode of every noetherian Hopf algebra bijective?
\end{question}

\noindent A positive answer to this question was conjectured by Skryabin \cite[page 623]{Sk06}. We'll review progress on this question in $\S$\ref{xxsubsec4.1}.

\medskip
\noindent(iii) The proof of Lemma \ref{xxlem2.4}(v) given in \cite{BZ08} would work also in the case where $H$ is Krull-Cohen Macaulay rather than $GK$-Cohen Macaulay if it could be shown in that case that the non-zero artinian right $H$-module $\mathrm{Ext}_H^d({}_{\e} \mathbb{K}, {}_H H)$ has finite $\mathbb{K}$-dimension. 

\end{remarks}

\bigskip

\section{Classes of Gorenstein and $GK$-Macaulay Hopf algebras}\label{xxsec3}

After a couple of preliminary lemmas, we record in $\S$\ref{subsecclasses} the progress which has been made to date towards settling Question \ref{xxqn1.1}. Then, in $\S$\ref{subsectPI} we focus on a particular class of examples, namely the affine noetherian Hopf $\mathbb{K}$-algebras which satisfy a PI, since the progress achieved in that setting may serve as an imperfect template for more general future results.

\subsection{Filtered-graded equipment}\label{xxsubsec3.1}

Initiated by Bjork and Ekstrom \cite{Bj87, Bj89, Ek89, BjEk90}), the analysis of relations between the possession by a filtered $\mathbb{K}$-algebra $A$ of the various homological properties listed in Definition \ref{xxdefn2.1} and by its associated graded algebra $\mathrm{gr}(A)$ appeared in a number of papers from the mid-1980s onwards - most notably, see \cite{Lev92}.  The following two lemmas build on this earlier work.

\begin{lemma}\label{xxlem3.1} (\cite[Lemma 4.4]{StZh94}) Let $A = \cup_{i \geq 0} \Gamma_i(A)$ be an $\mathbb{N}$-filtered $\mathbb{K}$-algebra with $\qquad$
$\mathrm{dim}_{\mathbb{K}}(\Gamma_0 (A))< \infty$. Suppose that $\mathrm{gr}_{\Gamma}(A)$ is noetherian, Auslander Gorenstein and graded $GK$-Cohen Macaulay (meaning that (\ref{xxeqn2.2}) is satisfied by all graded $\mathrm{gr}_{\Gamma}(A)$-modules $M$). Then $A$ is noetherian, Auslander Gorenstein and $GK$-Cohen Macaulay.
\end{lemma}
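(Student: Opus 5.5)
The plan is to transfer each property from $\mathrm{gr}_{\Gamma}(A)$ to $A$ with the standard filtered-graded techniques of Bj\"ork, Ekstr\"om and Levasseur. The key tool is the theory of good filtrations on finitely generated modules. Every finitely generated $A$-module $M$ admits a good filtration $\{\Gamma_i M\}$ with $\mathrm{gr}(M)$ finitely generated over $\mathrm{gr}_\Gamma(A)$. The filtration on $A$ is exhaustive and $\mathbb{N}$-indexed, with $\Gamma_{-1}(A) = 0$, so it is discrete and hence Zariskian. This lets us lift homological information from graded modules to filtered ones.

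Noetherianity comes first and is classical. A strictly ascending chain of left ideals $L_1 \subsetneq L_2 \subsetneq \cdots$ of $A$, with induced filtrations, yields a strictly ascending chain $\mathrm{gr}(L_1) \subsetneq \mathrm{gr}(L_2) \subsetneq \cdots$ of graded left ideals of $\mathrm{gr}_\Gamma(A)$. The inclusions stay strict because the filtration is exhaustive and bounded below. Hence $A$ is left noetherian, and symmetrically right noetherian.

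Next I would treat injective dimension and the Auslander condition. Take a filtered free resolution of $A$-modules that lifts a graded free resolution of the associated graded modules. Filtering $\mathrm{Hom}_A(-,A)$ then gives a spectral sequence, or more simply the standard inequality: $\mathrm{Ext}^i_A(M,A)$ carries a good filtration whose associated graded module is a subquotient of $\mathrm{Ext}^i_{\mathrm{gr}A}(\mathrm{gr} M, \mathrm{gr} A)$. Two consequences follow. First, $\mathrm{Ext}^i_{\mathrm{gr}A}(\mathrm{gr}M,\mathrm{gr}A)=0$ implies $\mathrm{Ext}^i_A(M,A)=0$, so $\mathrm{injdim}(A) \le \mathrm{injdim}(\mathrm{gr}A)$ and $j_A(M) \ge j_{\mathrm{gr}A}(\mathrm{gr}M)$. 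Second, for a submodule $N \subseteq \mathrm{Ext}^i_A(M,A)$ with induced filtration, $\mathrm{gr}N$ is a subquotient of the graded Ext. The graded Auslander condition for $\mathrm{gr}A$ gives $j(\mathrm{gr}N)\ge i$, and hence $j_A(N) \ge i$. To make this rigorous one also needs $j_A(N) = j_{\mathrm{gr}A}(\mathrm{gr}N)$, which is the standard equality of grades for Zariskian filtrations (Bj\"ork). I would invoke that, or reprove it via the Rees ring.

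For the $GK$-Cohen Macaulay property, the hypothesis $\dim_{\mathbb{K}}\Gamma_0(A)<\infty$ is used. Since $\mathrm{gr}_\Gamma(A)$ is finitely generated (being noetherian, $\mathbb{N}$-graded and with finite-dimensional degree-zero part), and $\Gamma_0$ is finite-dimensional, $A$ is affine. Moreover, for a good filtration one has $\mathrm{GKdim}(M) = \mathrm{GKdim}(\mathrm{gr}M)$ by the standard growth comparison of \cite{KL00}. This rests on $\dim \Gamma_n M$ being the partial sums of the Hilbert function of $\mathrm{gr} M$, together with the fact that $\Gamma_n(A)$ exhausts a finite-dimensional generating frame. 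Combining this with $j_A(M)=j(\mathrm{gr}M)$ and $\mathrm{GKdim}(A)=\mathrm{GKdim}(\mathrm{gr}A)$ turns the graded identity (\ref{xxeqn2.2}) into the ungraded one.

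I expect the main obstacle to be the grade equality $j_A(M) = j_{\mathrm{gr}A}(\mathrm{gr}M)$. The easy direction is only the inequality $\ge$; the reverse requires showing that the bottom nonvanishing graded Ext is not killed in the spectral sequence. I would try to handle this via the Rees algebra $\widetilde{A}=\bigoplus \Gamma_i(A)t^i$. Its quotients are $\widetilde A/t\widetilde A\cong \mathrm{gr}A$ and $\widetilde A/(t-1)\widetilde A\cong A$. I would show $\widetilde A$ is Auslander Gorenstein by lifting along the central regular element $t$ (Rees' lemma and its Auslander analogue). Localization at $t$ then transfers the property to $A[t^{\pm1}]$, and hence to $A$.
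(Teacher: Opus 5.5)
Your outline is the standard filtered--graded argument behind \cite[Lemma 4.4]{StZh94}; the paper gives no proof beyond that citation. Your steps for noetherianity, $\mathrm{injdim}(A)\le\mathrm{injdim}(\mathrm{gr}A)$ and $\mathrm{GKdim}(M)=\mathrm{GKdim}(\mathrm{gr}M)$ are fine, and if you simply quote Bjork's equality $j_A(M)=j_{\mathrm{gr}A}(\mathrm{gr}M)$ the proof is complete.

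Your proposed self-contained proof of that equality, however, has a gap. Proving that the Rees ring $\widetilde A$ is Auslander Gorenstein and then inverting $t$ only re-proves that $A$ is Auslander Gorenstein. It says nothing about the grades of individual modules. To extract the equality from $\widetilde A$, take the $t$-torsionfree Rees module $\widetilde M$ and set $g=j_{\widetilde A}(\widetilde M)$. You would still have to show that $j_{\widetilde A}(\widetilde M/t\widetilde M)=g+1$:
the bound $\ge$ comes from partitivity of $\mathrm{injdim}(\widetilde A)-j_{\widetilde A}$ (Remark \ref{xxrems2.3}(v)) along the chain $\widetilde M\supseteq t\widetilde M\supseteq t^2\widetilde M\supseteq\cdots$, whose factors are all shifts of $\widetilde M/t\widetilde M$; the bound $\le$ comes from graded Nakayama applied to $\mathrm{Ext}^g_{\widetilde A}(\widetilde M,\widetilde A)$.

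The long exact sequence for multiplication by $t$ then shows that $t$ acts regularly on $\mathrm{Ext}^g_{\widetilde A}(\widetilde M,\widetilde A)$. So this Ext survives inverting $t$, giving $j_A(M)=g$, and Rees' lemma gives $j_{\mathrm{gr}A}(\mathrm{gr}M)=g$. None of this is in your sketch.

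Incidentally, you do not need the equality for the Auslander condition. The bound $j_A(N)\ge j_{\mathrm{gr}A}(\mathrm{gr}N)\ge i$ already follows from your easy inequality, together with exactness of grade over $\mathrm{gr}A$, which handles the passage to subquotients. The equality matters only for the Cohen--Macaulay identity.

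A shorter way to close the gap uses the hypotheses you already have. Put $n=\mathrm{GKdim}(\mathrm{gr}A)=\mathrm{GKdim}(A)$.
\begin{itemize}
\item The easy inequality and graded $GK$-Cohen Macaulayness give $j_A(M)\ge j_{\mathrm{gr}A}(\mathrm{gr}M)=n-\mathrm{GKdim}(M)$.
\item Conversely, $A$ is now known to be Auslander Gorenstein, so the double-Ext spectral sequence gives a finite filtration of $M$. Its factors are subquotients of $\mathrm{Ext}^p_A(\mathrm{Ext}^p_A(M,A),A)$ with $p\ge j_A(M)$.
\item By your filtered comparison, $\mathrm{gr}\,\mathrm{Ext}^p_A(X,A)$ is a subquotient of $\mathrm{Ext}^p_{\mathrm{gr}A}(\mathrm{gr}X,\mathrm{gr}A)$. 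Hence it has grade $\ge p$, and so $GK$-dimension $\le n-p$. Thus $\mathrm{GKdim}\,\mathrm{Ext}^p_A(X,A)\le n-p$, and the same bound holds for its subquotients.
\item $GK$-dimension is exact on finitely generated $A$-modules here, because the Hilbert functions of associated graded modules add along strict exact sequences. Therefore $\mathrm{GKdim}(M)\le n-j_A(M)$.
\end{itemize}
Combining the two inequalities gives (\ref{xxeqn2.2}) for $A$, with the grade equality as a by-product.
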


\medskip

\begin{lemma}\label{xxlem3.2} Let $A$ be a noetherian  Auslander Gorenstein $\mathbb{K}$-algebra. Let $B$ be either  $ A[X; \sigma, \delta]$ or $ A[X^{\pm 1}; \sigma]$, respectively an Ore extension or a skew Laurent extension of $A$, with $\sigma$ a $\mathbb{K}$-algebra automorphism of $A$ and $\delta$ a $\sigma$-derivation of $A$. 
\begin{enumerate}
\item[(i)] $B$ is Auslander Gorenstein, with 
$$ \mathrm{injdim}(A) \; \leq \; \mathrm{injdim}(B) \; \leq \; \mathrm{injdim}(A) + 1. $$
\item[(ii)] If $A$ is Auslander regular then so is $B$, with 
$$\mathrm{gldim}(A)  \; \leq \; \mathrm{gldim}(B) \; \leq \; \mathrm{gldim}(A) + 1.$$
\item[(iii)] If $B$ is a Hopf algebra with Auslander regular Hopf subalgebra $A$, and $A$ is semiprime, then 
$$ \mathrm{gldim}(B) \; = \; \mathrm{gldim}(A) + 1.$$
\item[(iv)] Suppose that $A = \oplus_{i \geq 0} A_i$ is a connected $\mathbb{N}$-graded $\mathbb{K}$-algebra (i.e. $A_0 = \mathbb{K}$) with $\sigma (A_i) \subseteq A_i$ for all $i$), and that $A$ is noetherian Auslander Gorenstein and $GK$-Cohen Macaulay. Then $B$ is $GK$-Cohen Macaulay.
\end{enumerate}
\end{lemma}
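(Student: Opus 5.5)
The plan for Lemma \ref{xxlem3.2} is to handle all four parts through filtered--graded comparison, using Lemma \ref{xxlem3.1} and the classical Bjork--Ekstrom/Levasseur machinery.

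For part (i), filter $B$ by degree in $X$ (for the skew Laurent case, localise at the Ore set $\{X^n\}$). For $B=A[X;\sigma,\delta]$ the associated graded ring is $A[X;\sigma]$. The Auslander--Gorenstein property lifts from a Zariskian-filtered ring's associated graded ring, by Bjork and Ekstrom \cite{Bj89, BjEk90}; the filtration by $X$-degree is Zariskian since it is positive and the Rees ring is noetherian. So it suffices to treat $A[X;\sigma]$, and for this there is the standard change-of-rings argument. Since $B$ is flat over $A$ on both sides, one has
\[\mathrm{Ext}^i_B(M\otimes_A B,B)\cong \mathrm{Ext}^i_A(M,A)\otimes_A B .\]
For a general finitely generated $B$-module $N$ there is the short exact sequence
\[0\to N\otimes_A B\to N\otimes_A B\to N\to 0\]
(twisted by $\sigma$). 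Together these give $\mathrm{injdim}(B)\le \mathrm{injdim}(A)+1$ and transfer the Auslander condition, via the usual grade shift $j_B(N)=j_A(N)+1$ for modules killed appropriately. The lower bound comes from $\mathrm{Ext}^i_B(M\otimes_A B,B)\neq 0$ when $\mathrm{Ext}^i_A(M,A)\ne0$, by faithful flatness. The Laurent case follows by central-like localisation at the Ore set $\{X^n\}$, which preserves Auslander--Gorenstein and does not increase injective dimension. Part (ii) is the same argument with global dimension in place of injective dimension; the corresponding bounds are classical (McConnell--Robson \cite{McCR87}, Chapter 7).

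For part (iii), the key input is Lemma \ref{xxlem2.4}(i): $\mathrm{gldim}(B)=\mathrm{prdim}_B({}_\e\mathbb{K})$, and the same holds for $A$. I would show that $\mathrm{Ext}^{n+1}_B({}_\e\mathbb{K},B)\ne 0$, where $n=\mathrm{gldim}(A)$. The approach is to resolve ${}_\e\mathbb{K}$ over $B$ through $B\otimes_A {}_\e\mathbb{K}$ (that is, $B/B A^+$) followed by the map onto $\mathbb{K}$. Since $A$ is a Hopf subalgebra, $B/BA^+$ is a quotient Hopf module, and its dimension is infinite because $X$ acts essentially like a polynomial variable. The Ext long exact sequence then gives $\mathrm{Ext}^{n+1}_B(\mathbb{K},B)$ as a cokernel of an endomorphism of $\mathrm{Ext}^n_A(\mathbb{K},A)\otimes_A B$ induced by $X-\lambda$. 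The task is to prove this cokernel is nonzero. Semiprimeness of $A$ should enter through Auslander regularity: by Levasseur, $A$ semiprime Auslander regular implies $A$ is a domain-like ring in which $\mathrm{Ext}^n_A(\mathbb{K},A)\neq 0$ is a nonzero module of minimal GK-type on which $X-\lambda$ cannot be surjective. \emph{This is the step I expect to be the main obstacle}. I would first try reducing via a faithfully flat argument to showing that the $A$-module $\mathrm{Ext}^n_A(\mathbb{K},A)$ is finite dimensional and nonzero, whence $X-\lambda$ acts with nontrivial cokernel on its induced module.

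For part (iv), with $A$ connected graded, filter $B$ by total degree, giving $X$ degree $1$. After a standard regrading trick (as in Levasseur), $\mathrm{gr}(B)$ is $A[X;\sigma]$ or $A[X^{\pm1};\sigma]$. For the Ore case $\Gamma_0$ is finite dimensional, and $A[X;\sigma]$ is connected graded, noetherian, Auslander--Gorenstein by (i), and graded GK-Cohen--Macaulay. The last property follows from the grade-shift formula $j=j_A+1$ together with $\mathrm{GKdim}$ increasing by exactly $1$ under induction $-\otimes_A A[X;\sigma]$, which holds since $\sigma$ preserves the locally finite grading. Lemma \ref{xxlem3.1} then gives the result. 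The Laurent case reduces to this by localisation, noting that GK-dimension and grade are both preserved under localisation at $\{X^n\}$ for modules that are $X$-torsionfree.
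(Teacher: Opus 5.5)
\textbf{The main gap is in (iii).} Your setup is correct. Lemma \ref{xxlem2.4}(i) reduces (iii) to showing $\mathrm{prdim}_B({}_\e\mathbb{K})=n+1$. The sequence $0\to B\otimes_A\mathbb{K}'\to B\otimes_A\mathbb{K}\to\mathbb{K}\to0$, with $\mathbb{K}'$ the $\sigma$-twist, embeds the cokernel of the connecting map $\phi:\mathrm{Ext}^n_A(\mathbb{K},B)\to\mathrm{Ext}^n_A(\mathbb{K}',B)$ into $\mathrm{Ext}^{n+1}_B(\mathbb{K},B)$. However, you never show that $\mathrm{coker}(\phi)\neq 0$, and the route you propose for it cannot work:
\begin{enumerate}
\item[(a)] Finite-dimensionality of $\mathrm{Ext}^n_A({}_\e\mathbb{K},A)$ is part of the $AS$-Gorenstein property of $A$. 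That is not among the hypotheses, and Lemma \ref{xxlem2.4}(v) supplies it only under the extra $GK$-Cohen Macaulay assumption.
\item[(b)] No result makes a semiprime Auslander regular algebra ``domain-like''. For instance, in characteristic $0$, $\mathbb{K}(\mathbb{Z}\times C_2)\cong\mathbb{K}[x^{\pm1}]\times\mathbb{K}[x^{\pm1}]$ is a semiprime Auslander regular Hopf algebra that is not even prime.
\end{enumerate}

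In fact far less is needed. Since $\mathbb{K}'$ is finitely generated over the noetherian ring $A$ with $\mathrm{prdim}_A(\mathbb{K}')=n$, and $\mathrm{Ext}^n_A(\mathbb{K}',-)$ is right exact and commutes with direct sums, we get $\mathrm{Ext}^n_A(\mathbb{K}',A)\neq0$. Because ${}_AB=\bigoplus_i AX^i$ and $\mathbb{K}$ has a resolution by finitely generated free $A$-modules, $\mathrm{Ext}^n_A(\mathbb{K},B)=\bigoplus_i \mathrm{Ext}^n_A(\mathbb{K},AX^i)$, and likewise for $\mathbb{K}'$. Lift the action of $X$ to a $(\sigma,\delta)$-semilinear endomorphism of that resolution. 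Then $\phi$ is an isomorphism $\mathrm{Ext}^n_A(\mathbb{K},AX^i)\to\mathrm{Ext}^n_A(\mathbb{K}',AX^{i+1})$ plus a map preserving $X$-degree.
\begin{enumerate}
\item[(a)] In the Ore case, every nonzero element of $\mathrm{im}(\phi)$ therefore has a nonzero component in positive degree.
\item[(b)] In the Laurent case the degree-preserving part is also invertible, since it lifts multiplication by $\varepsilon(X)\neq0$. So a nonzero element of $\mathrm{im}(\phi)$ has nonzero components in two distinct degrees.
\end{enumerate}
Either way $\mathrm{im}(\phi)$ misses the nonzero degree-zero summand $\mathrm{Ext}^n_A(\mathbb{K}',A)$, so $\mathrm{coker}(\phi)\neq0$. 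The paper instead obtains $\mathrm{prdim}_B({}_\e\mathbb{K})=n+1$ at once from \cite[Corollary 7.9.18]{McCR87}. The leading-term argument uses neither semiprimeness nor the Auslander condition, consistent with Remark \ref{xxremsgldim}(ii).

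\textbf{Part (iv) also has a hole.} For $C=A[X;\sigma]$ you check the Cohen--Macaulay equality only on induced modules, and the bookkeeping is off.
\begin{enumerate}
\item[(a)] For $C\otimes_A M$, your own base-change formula shows the grade is \emph{unchanged}, while $\mathrm{GKdim}$ rises by one.
\item[(b)] For modules killed by the normal element $X$, $\mathrm{GKdim}$ is unchanged and the grade rises by one (Rees' lemma).
\end{enumerate}
A general graded $C$-module is neither of these. It needs a d\'evissage through its $X$-torsion submodule and $N/XN$, using exactness of $\mathrm{GKdim}$ on graded modules and of $\mathrm{GKdim}(C)-j_C$ (Remark \ref{xxrems2.3}(v)). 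This is the content of \cite[Lemma]{LevSt93}, which is all the paper invokes. Two further points in (iv):
\begin{enumerate}
\item[(a)] Giving $X$ degree $1$ yields a filtration only if $\delta$ raises degree by at most one; in general you must choose $\deg X$ larger.
\item[(b)] In the Laurent case, localisation formally gives only $j_B(M)\ge j_C(M_0)$. Equality, and invariance of $\mathrm{GKdim}$, require an argument.
\end{enumerate}

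In (i), your sketch asserts the Auslander condition for $A[X;\sigma]$ rather than deriving it from the change-of-rings outline. That step is exactly Ekstr\"om's theorem \cite[Theorem 4.2]{Ek89}, which the paper simply cites, together with \cite[Proposition 2.1]{ASZ99} and \cite[Proposition 1.9]{Y97}. For (ii) the paper cites \cite[Theorem 7.5.3]{McCR87}.
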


\begin{proof} (i) For $B = A[X; \sigma, \delta]$, the fact that $B$ is Auslander Gorenstein is \cite[Theorem 4.2]{Ek89}. For $B = A[X^{\pm 1} ; \sigma]$, note first that the subalgebra $B_0 := A[X; \sigma]$ is Auslander Gorenstein by the first case, and then, setting $\mathcal{S} := \{X^i : i \geq 0 \}$, the localisation $B = B_0\mathcal{S}^{-1}$ remains Auslander Gorenstein by \cite[Proposition 2.1]{ASZ99}. The displayed inequalities are proved in \cite[Proposition 1.9]{Y97} for $B = A[X; \sigma, \delta]$; for $B = A[X^{\pm 1}; \sigma]$, the inequalities for $B_0 := A[X; \sigma]$ are a special case of the first part, and it is easy to check that they follow from this for the localisation $B = B_0\langle X^{\pm 1}\rangle$.
\medskip

\noindent(ii) Suppose that $\mathrm{gldim}(A) < \infty$. Then the inequalities are given for the two options for $B$ respectively by parts (i) and (ii) of \cite[Theorem 7.5.3]{McCR87}.
\medskip

\noindent(iii) Let $\mathrm{gldim}(A) = n < \infty$. In view of Lemma \ref{xxlem2.4}(i) we only need to show that $\mathrm{prdim}_B({}_{\varepsilon}\mathbb{K}) = n + 1$. For both choices of $B$ this follows from \cite[Corollary 7.9.18]{McCR87}.

\medskip
\noindent(iv) This is \cite[Lemma]{LevSt93}\footnote{It is assumed in \cite{LevSt93} that $A$ is Auslander regular, but the same proof works for $A$ Auslander Gorenstein.}.
\end{proof}

\medskip

\begin{remarks}\label{xxremsgldim}(i) Presumably the analogous statement to Lemma \ref{xxlem3.2}(iii) for $A$ Auslander Gorenstein and with injective dimension replacing global dimension should also be true; but such a result does not seem to be in the literature. 

\medskip

\noindent(ii) The hypothesis in Lemma \ref{xxlem3.2}(iii) that $A$ is semiprime may well be unnecessary, both for the global dimension statement and for the putative variant for  injective dimension. In the case of global dimension, the semiprime assumption may even be redundant - see below, Theorem \ref{gldimthm}(ii) and Remark \ref{gldimthmremks}(ii). 
\end{remarks} 
\bigskip

\subsection{Classes of $AS$-Gorenstein Hopf algebras}\label{subsecclasses}
 
Notes on the proofs and attributions of the parts of the following portmanteau theorem are given after its statement. First we supply some brief details regarding definitions and notation. 

For part (v), note that a Hopf algebra is \emph{connected} if its coradical is the base field $\mathbb{K}$ - in other words, $H$ is pointed with trivial group of group-likes $G(H)$. 

The quantised function algebra $\mathcal{O}_q(G)$ of part (vi) is defined as follows. Let $\mathbb{K}$ be an arbitrary field containing a non-zero element $q$ which is \emph{not} a root of unity, let $\mathfrak{g}$ be a (finite dimensional) simple Lie algebra defined over $\mathbb{K}$, and let $U_q(\mathfrak{g})$ be its quantised enveloping algebra as in part (iii). Then define $\mathcal{O}_q(G)$ to be the Hopf subalgebra of the restricted dual $U_q(\mathfrak{g})^{\circ}$ consisting of the matrix coefficients of all the finite dimensional simple type 1 representations of $U_q (\mathfrak{g})$. For further details see \cite[page 2]{Yak14} or Joseph's book \cite{Jo95}. 

In part (ix), $H(\g)$ and $\widehat{H}(\g)$ respectively denote bosonisations of $U(\g)$  with respect to the groups $C_2$ and $C_{\infty}$; for the definitions of these Hopf algebras, see \cite[Definition 2.43]{ABS26}. 

As for the noetherian cocommutative Hopf algebras, in referring to those ``of known type'' in part (x) we mean those whose group of grouplikes is polycyclic-by-finite and whose Lie algebra of primitive elements is finite dimensional.

\begin{theorem}\label{xxthm3.3} Except where otherwise stated below, $\mathbb{K}$ is an arbitrary field.  
\begin{enumerate}
\item[(i)] Let $H$ be  affine noetherian Hopf $\mathbb{K}$-algebra satisfying a polynomial identity. Then $H$ is Auslander Gorenstein, $AS$-Gorenstein and $GK$-Cohen Macaulay.
\item[(ii)] Let $\mathfrak{g}$ be a finite dimensional Lie algebra over $\mathbb{K}$. Then the enveloping algebra $U(\mathfrak{g})$ is Auslander regular, $AS$-regular and $GK$-Cohen Macaulay, with 
$$\mathrm{gldim}(U(\mathfrak{g})) \; = \ \; \mathrm{GKdim}(U(\mathfrak{g})) \; = \; \mathrm{dim}_{\mathbb{K}}(\mathfrak{g}).$$
\item[(iii)] Let $q \in \mathbb{C}^{\ast}$ and let $\mathfrak{g}$ be a complex semisimple Lie algebra. Then the quantised enveloping algebra $U_q (\mathfrak{g})$ is Auslander regular, $GK$-Cohen Macaulay and $AS$-regular, with 
$$\mathrm{gldim}(U_q(\mathfrak{g}))\; = \; \mathrm{GKdim}(U_q(\mathfrak{g})) \; = \; \mathrm{dim}_{\mathbb{C}}(\mathfrak{g}).$$
\item[(iv)] Let $G$ be a polycyclic-by-finite group with Hirsch number\footnote{The Hirsch number of a polycyclic-by-finite group $G$ is the number of infinite cyclic subfactors in a finite series of subnormal subgroups from $\{1\}$ to $G$ with successive factors either finite or cyclic.} $h(G)$ and let $\mathbb{K}G$ be the group algebra.
\begin{itemize}
\item[(a)] $\mathbb{K}G$ is Auslander Gorenstein and $AS$-Gorenstein, with $\mathrm{injdim}(\mathbb{K}G) = h(G)$.
\item[(b)] $\mathbb{K}G$ is Auslander regular and $AS$-regular $\Longleftrightarrow$ $\mathbb{K}$ has characteristic 0, or $\mathbb{K}$ has characteristic $p > 0$ and $G$ has no element of order $p$.
\item[(c)] $\mathrm{GKdim}(\mathbb{K}G)$ is finite if and only if $G$ is nilpotent-by-finite. 
\item[(d)] Suppose that $G$ is nilpotent-by-finite and let $N$ be any nilpotent subgroup of $G$ with $|G\, : \, N| < \infty$. Let $N = Z_0 \supset Z _1 \supset \cdots \supset Z_c = \{1 \}$ be the lower central series of $N,$ with $d_i := h(Z_{i-1}/Z_{i})$ for $i = 1, \ldots , c$. Then 
\begin{equation}\label{xxeqn3.4} \mathrm{GKdim}(\mathbb{K}G) \; = \; \sum_{i = 1}^c   id_i .      
\end{equation}
\item[(e)] $\mathbb{K}G$ is $GK$-Cohen Macaulay if and only if $G$ is abelian-by-finite.
\end{itemize}

\item[(v)] Let $H$ be a noetherian Hopf $\mathbb{K}$-algebra satisfying hypothesis $\mathbf{\left(\mathcal{F}\right)}$:
\begin{align*} \mathbf{\left(\mathcal{F}\right)} \quad H& \textit{ has an ascending filtration }  H = \cup_{i \geq 0} H_i \textit{ with } H_0 = \mathbb{K}  \textit{ such that } \mathrm{gr}H \textit{ is connected}\\
&\textit{graded noetherian and every non-simple prime graded factor of } \mathrm{gr}H \\
&\textit{ contains a homogeneous normal element of positive degree.} 
\end{align*}
Then $H$ is Auslander Gorenstein, $GK$-Cohen Macaulay and $AS$-Gorenstein.

\item[(vi)]  Let $\mathcal{O}_q(G)$ be a quantised function algebra, where $q \in \mathbb{K}$ is non-zero and not a root of unity, and let $I$ be a Hopf ideal of $\mathcal{O}_q (G)$. Then $\mathcal{O}_q (G)/I$ is Auslander Gorenstein, $GK$-Cohen Macaulay and $AS$-Gorenstein. Moreover regularity holds for $\mathcal{O}_q(G)$, with 
$$\mathrm{gldim}(\mathcal{O}_q (G)) \; = \; \mathrm{GKdim}(\mathcal{O}_q (G)) \; = \; \mathrm{dim}(G).$$

\item[(vii)]  Assume that $\mathbb{K}$ is algebraically closed of characteristic 0, and let $H$ be a connected Hopf $\mathbb{K}$-algebra with $\mathrm{GKdim}(H) = \ell < \infty$. Then:
\begin{itemize}
\item[(a)] $\ell \in \mathbb{Z}_{\geq 0}$ and $H$ is a noetherian domain with $\mathrm{gldim}(H) = \ell$ and $\mathrm{Kdim}(H) \leq \ell$;
\item[(b)] $H$ is Auslander regular and $GK$-Cohen Macaulay;
\item[(c)] $H$ is $AS$-regular.
\end{itemize}

\item[(viii)]   Assume that $\mathbb{K}$ is algebraically closed of characteristic 0, and let $H$ be a Hopf $\mathbb{K}$-algebra which is connected graded as an algebra, $H = \oplus_{i \geq 0}H(i)$ with $\mathrm{dim}_{\mathbb{K}}(H(i)) < \infty$ for all $i$.  Suppose that $\mathrm{GKdim}(H) = d < \infty$. Then $d \in \mathbb{Z}_{\geq 0}$ and $H$ is a noetherian domain which is Auslander-regular, $GK$-Cohen–Macaulay and $AS$-regular with $\mathrm{gldim}(H) = d$.

\item[(ix)] Let $\g = \g_0 \oplus \g_1$ be a finite dimensional Lie superalgebra with $\mathrm{dim}_{\mathbb{K}}(\g_0) = m$.
\begin{itemize}
\item[(a)] The algebras  $U(\g)$, $H(\g)$ and $\widehat{H}(\g)$  are all Auslander-Gorenstein, AS-Gorenstein and GK-Cohen Macaulay, having injective and GK-dimensions respectively $m$, $m$ and $m + 1$.
\item[(b)] Moreover $$ \qquad \begin{aligned} \mathrm{gldim}(U(\g)) < \infty &\Longleftrightarrow \mathrm{gldim}(H(\g)) < \infty 
\Longleftrightarrow \mathrm{gldim}(\widehat{H}(\g)) < \infty \\ & \Longleftrightarrow [x,x] \neq 0 \;\; \forall\, 0 \neq x \in \g_1\\
& \Longleftrightarrow U(\g) \textit{ is a domain}.
\end{aligned}$$
When finite, the values of the global dimensions coincide with the corresponding injective dimensions.
\end{itemize}

\item[(x)] Assume that $\mathbb{K}$ is algebraically closed of characteristic 0, and let $H$ be a cocommutative noetherian Hopf algebra of known type (in the terminology explained before the theorem), so that $H = U(\mathfrak{g})\#T$, where $\mathfrak{g}$ is the finite dimensional Lie algebra of primitive elements and $T$ is the polycyclic-by-finite group of grouplikes. Then $H$ is Auslander regular and AS-regular, with 
$$\mathrm{gldim}(H) \; = \; \mathrm{dim}_{\mathbb{K}}(\mathfrak{g}) + h(T),$$
where $h(T)$ denotes the Hirsch length of $T$.
\end{enumerate}
\end{theorem}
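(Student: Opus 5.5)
The theorem is a portmanteau, so I will prove it part by part. The common engine is this: exhibit each $H$ as an iterated Ore or skew Laurent extension, or as a filtered algebra whose associated graded ring is Auslander Gorenstein and $GK$-Cohen Macaulay. Lemmas \ref{xxlem3.1} and \ref{xxlem3.2} then give the Auslander Gorenstein (or Auslander regular) and $GK$-Cohen Macaulay properties. Lemma \ref{xxlem2.4}(v) upgrades these to $AS$-Gorenstein (or $AS$-regular). Finally, Lemma \ref{xxlem2.4}(i) and Lemma \ref{xxlem3.2}(iii) compute global dimensions via $\mathrm{prdim}({}_{\e}\mathbb{K})$.

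For (ii), I filter $U(\g)$ by PBW degree. Then $\mathrm{gr}\, U(\g)$ is a polynomial ring in $\dim\g$ variables, which is commutative, regular and Cohen Macaulay. Lemma \ref{xxlem3.1} applies, and Lemma \ref{xxlem2.4}(v) gives $AS$-regularity. The global dimension equals $\dim \g$, for example via the Chevalley–Eilenberg resolution of ${}_{\e}\mathbb{K}$ together with Lemma \ref{xxlem2.4}(i).

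Parts (iii), (vi), (vii), (viii) and (x) follow the same pattern, with a filtration supplied by the relevant literature. For $U_q(\g)$ I use the De Concini–Kac filtration, whose associated graded ring is a skew-polynomial/Laurent algebra; Lemma \ref{xxlem3.2}(i),(iv) handles that algebra. For (vii) I use the coradical filtration: in characteristic 0 the associated graded of a connected $H$ of finite GK-dimension is a polynomial ring (Zhuang). For (x) I write $H = U(\g)\#T$ and build it as iterated skew Laurent extensions over $U(\g)$ along the polycyclic series of $T$, followed by a finite extension. The counts $\dim\g + h(T)$ come from Lemma \ref{xxlem3.2}(iii).

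Part (v) is the filtered-graded argument again. Hypothesis $(\mathcal{F})$ is exactly what is needed, via the normal-element technique of Zhang and Levasseur, to make $\mathrm{gr}H$ Auslander Gorenstein and graded Cohen Macaulay, after which Lemma \ref{xxlem3.1} applies. For (ix), I filter by the $\mathbb{Z}_2$-grading data. Then $\mathrm{gr}\,U(\g)$ is $S(\g_0)\otimes\Lambda(\g_1)$, a finite module over a polynomial ring. The $C_2$ and $C_\infty$ bosonisations are handled by a skew group ring argument and by Lemma \ref{xxlem3.2}(i) respectively, which accounts for the extra $+1$. For (iv)(a),(b) I induct on a polycyclic series, using Lemma \ref{xxlem3.2} for infinite cyclic factors and crossed-product arguments for finite ones. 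Serre's theorem accounts for $p$-torsion. Parts (c) and (d) are the Bass–Guivarc'h formula, and (e) is a separate known result.

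The main obstacle is (i), the affine PI case, since no filtration is available there. My plan is to follow Wu–Zhang. First, use the PI theory of Hopf algebras to show that $H$ is finite over its centre, or at least has an artinian quotient ring. Second, localise at maximal ideals of the centre and use homological transparency through the trivial module: translating by $H$-module tensoring shows every simple module has the same injective-dimension behaviour as ${}_{\e}\mathbb{K}$. Third, use the Stafford–Zhang criterion linking Auslander-Gorenstein to finite injective dimension in the PI setting, and then apply Lemma \ref{xxlem2.4}(ii). I expect proving $\mathrm{injdim}(H)<\infty$ itself to be the hard step. I would first attempt it via a dualising complex (Van den Bergh existence for PI algebras) and the rigidity of the trivial module.
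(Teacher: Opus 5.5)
Your uniform mechanism fails in two places. First, the upgrade to $AS$-Gorenstein or $AS$-regular through Lemma \ref{xxlem2.4}(v) needs the $GK$-Cohen Macaulay property, and that is unavailable in (iv) and (x). By (iv)(e) itself, $\mathbb{K}G$ is $GK$-Cohen Macaulay only when $G$ is abelian-by-finite, and $\mathrm{GKdim}(\mathbb{K}G)=\infty$ unless $G$ is nilpotent-by-finite; the same holds for $U(\g)\#T$. Your induction along a polycyclic series therefore gives Auslander Gorenstein. In (x) it also gives Auslander regularity and the count $\dim_{\mathbb{K}}\g+h(T)$ via Lemma \ref{xxlem3.2}(ii),(iii), which is fine in characteristic $0$. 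But it does not give (AS2), nor the exact value $\mathrm{injdim}(\mathbb{K}G)=h(G)$, since Lemma \ref{xxlem3.2}(i) only gives bounds. New input is needed here. For (iv)(a) this is group cohomology, as in \cite[Theorem 6.7]{BZ08}. For example, $\mathrm{Ext}^i_{\mathbb{K}G}({}_{\e}\mathbb{K},\mathbb{K}G)=H^i(G,\mathbb{K}G)$ reduces by Shapiro's lemma to a normal poly-infinite-cyclic subgroup of finite index. That subgroup is a Poincar\'e duality group of dimension $h(G)$, so the Ext is one-dimensional in degree $h(G)$ and zero elsewhere, and Lemma \ref{xxlem2.4}(ii) applies. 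For (x) the paper uses Le Meur's theorem that smash products preserve the twisted Calabi--Yau property \cite[Proposition 7.3.1]{LM19}, which is equivalent to $AS$-regularity by \cite[Lemma 1.3]{RRZ14}.

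This also affects (iv)(e), which you set aside as `known'. The paper derives it from (a), (d) and (i): $AS$-Gorenstein gives $j({}_{\e}\mathbb{K})=h(G)=\sum d_i$, while Cohen Macaulay gives $j({}_{\e}\mathbb{K})=\mathrm{GKdim}(\mathbb{K}G)=\sum i d_i$, forcing $c=1$; the converse is the PI case.

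Second, in (v) hypothesis $(\mathcal{F})$ does \emph{not} make $\mathrm{gr}H$ Gorenstein. In a commutative connected graded algebra every element is normal, and non-Gorenstein examples arise from Hopf algebras. Filter $H=\mathbb{K}[x]$, with $x$ primitive, by giving $x^2,x^3$ weight $1$ and $x$ weight $2$. Then $\mathrm{gr}H$ is commutative noetherian of Krull dimension $1$, and the symbol $\bar{x}$ is killed by $(\mathrm{gr}H)_{+}$. So $\mathrm{gr}H$ has depth $0$ and is not even Cohen--Macaulay. The normal-element results of Levasseur and Zhang either presuppose Gorenstein hypotheses (on a factor $A/zA$, or on $A$ itself) or yield only an Auslander balanced dualising complex. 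The Hopf structure of $H$ must therefore be used. This is why \cite{LWZ09} argue via rigid dualising complexes: the Hopf structure is what forces the dualising complex to be a shifted invertible bimodule. Your treatment of (vi) by `the same pattern' inherits this gap.

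There are further problems.
\begin{itemize}
\item In (iii), Lemma \ref{xxlem3.1} does not apply to the De Concini--Kac filtration. That filtration is indexed by $\mathbb{Z}_{\geq 0}^{2N+1}$ ($N$ the number of positive roots) rather than $\mathbb{N}$, and its degree-zero part contains the Laurent polynomial algebra in the $K_i$, so it is infinite dimensional. Nor does Lemma \ref{xxlem3.2}(iv) iterate: once one $K_i^{\pm 1}$ is adjoined, the coefficient ring is no longer connected graded. This route can give Auslander regularity (cf.\ \cite{BG97}) but not the $GK$-Cohen Macaulay property, for which the paper needs the finer arguments of \cite{GTL04}.
\item In (i), your first step is false in general, since affine noetherian PI Hopf algebras need not be finite over their centres (Remark \ref{xxrem3.7}(i)). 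An artinian quotient ring is a \emph{consequence} of Auslander Gorenstein plus $GK$-Cohen Macaulay (Theorem \ref{qringthm}), not an available input. The decisive step, $\mathrm{injdim}(H)<\infty$, is left open; the paper simply cites \cite{WZ02}.
\item For (viii), `a filtration supplied by the literature' is a placeholder, not an argument.
\item Part (ix)(b) is not addressed at all; it rests on B\o gvad's results \cite{Bo84}. Also, Lemma \ref{xxlem3.2}(i) only places $\mathrm{injdim}(\widehat{H}(\g))$ between $m$ and $m+1$, so it does not give the exact value $m+1$.
\end{itemize}
Your arguments for (ii) and (vii) are sound, and are essentially those of the paper and of Zhuang respectively.
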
 

\begin{proof} (i) This is \cite[Theorems 0.1 and 0.2]{WZ02}.

\medskip
\noindent(ii) If $\mathbb{K}$ has positive characteristic the result is a special case of part (i), so assume that $\mathbb{K}$ has characteristic 0. With respect to the standard filtration on $U(\mathfrak{g})$, $\mathrm{gr}U(\mathfrak{g})$ is a commutative polynomial $\mathbb{K}$-algebra on $\mathrm{dim}_{\mathbb{K}}(\mathfrak{g})$ variables, so it is immediate from Lemma \ref{xxlem3.1} that $U(\mathfrak{g})$ is Auslander Gorenstein and $GK$-Cohen Macaulay. So $U(\mathfrak{g})$ is $AS$-Gorenstein by Lemma \ref{xxlem2.4}(iv). Then $\mathrm{gldim}(U(\mathfrak{g})) = \mathrm{dim}_{\mathbb{K}}(\mathfrak{g})$ by \cite[Theorem XIII.8.2]{CE56}, so the proof is complete, with the value of $\mathrm{GKdim}(U(\mathfrak{g}))$ following either from the Cohen Macaulay property, or directly from \cite[Example 6.9]{KL00}.

\medskip

\noindent(iii) If $q$ is a root of unity then the result is a special case of part (i), but was proved earlier in \cite[Proposition 2.2, Theorem 2.3]{BG97}. Assume now that $q$ is generic. Then Auslander regularity was proved in \cite[Proposition 2.2]{BG97}. But the fact that $U_q (\mathfrak{g})$ is $GK$-Cohen Macaulay needs more delicate filtered graded arguments, which are provided in \cite{GTL04}. From this the $AS$-regularity of $U_q(\mathfrak{g})$ follows by Lemma \ref{xxlem2.4}(iv). That $\mathrm{gldim}(U_q(\mathfrak{g})) = \mathrm{dim}_{\mathbb{K}}(\mathfrak{g})$ is then given by \cite[Proposition 3.2.1]{Ch04} together with the $AS$-regularity.  Finally, the $GK$-Cohen Macaulay property forces $\mathrm{GKdim}(U_q(\mathfrak{g})) = \mathrm{gldim}(U(\mathfrak{g}))$.

\medskip

\noindent(iv)(a),(b) This is \cite[Theorem 6.7]{BZ08}.
\medskip

\noindent(c) This is Gromov's theorem, \cite{Gr81}. See also \cite[Chapter 11]{KL00}.
\medskip

\noindent(d)  This is due to Bass \cite{Ba72} and Guivarc'h \cite{Gu73}; see also \cite[Theorem 11.14]{KL00}.
\medskip

\noindent(e)$\Longrightarrow$ Suppose that $\mathbb{K}G$ is $GK$-Cohen Macaulay. Then $\mathrm{GKdim}(\mathbb{K}G) < \infty$, so $G$ is nilpotent-by-finite by (d). Since $\mathbb{K}G$ is $AS$-Gorenstein with $\mathrm{injdim}(\mathbb{K}G) = h(G)$ by (a), condition (AS2) of Definition \ref{xxdefn2.1}(iii) applied to ${}_{\varepsilon}\mathbb{K}$ yields
\begin{equation}\label{xxeqn3.5}    j({}_{\varepsilon}\mathbb{K}) \; = \; h(G). 
\end{equation}
On the other hand the $GK$-Cohen Macaulay property implies that
\begin{equation}\label{xxeqn3.6}  j({}_{\varepsilon}\mathbb{K}) \; = \; \mathrm{GKdim}(\mathbb{K}G).
\end{equation}
From (\ref{xxeqn3.5}) and (\ref{xxeqn3.6}) we deduce that $\mathrm{GKdim}(\mathbb{K}G) = h(G)$. From this together with (\ref{xxeqn3.4}) it follows that
$$ d \; = \; \sum_{i=1}^c d_i \; = \; \sum_{i=1}^c id_i, $$
which is only possible if $c = 1$, so that $G$ is abelian-by-finite, as claimed.

\noindent$\Longleftarrow$  Suppose that $G$ is abelian-by-finite, and recall that $G$ must be finitely generated, since by hypothesis it is polycyclic-by-finite. Therefore $\mathbb{K}G$ is affine noetherian and satisfies a polynomial identity by \cite[Theorem 5.2.14]{Pa77}, so $\mathbb{K}G$ is $GK$-Cohen Macaulay by part (i).
\medskip

\noindent(v) This is (part of) \cite[Theorem 0.4]{LWZ09}, proved using basic properties of rigid dualising complexes.
\medskip

\noindent(vi) This is \cite[Theorem 9.4 ]{Yak14}. The displayed equalities follow from the known value of the $GK$-dimension and the $GK$-Cohen Macaulay property.
\medskip

\noindent(vii) Parts (a) and (b) are results of Zhuang \cite[Corollary 6.10]{Zhu13}. Part (c) follows from (a), (b) and Lemma \ref{xxlem2.4}(iv).

\medskip
\noindent(viii) First, $d$ is a non-negative integer by \cite[Theorem 2.4]{BGZ17}. The remaining claims are proved in \cite[Theorem 2.2]{BGZ17}.

\medskip

\noindent(ix) The key points here, namely that $\mathrm{gldim}(U(\g))$ is finite and that $U(\g)$ is a domain when $[x,x] \neq 0$ for all $0 \neq x \in \g_1$,  are due to \cite{Bo84}; see also \cite{AL85}, and \cite[Chapter 17]{Mu12} for a detailed account. The consequences for $H(\mathfrak{g})$ and $\widehat{H}(\mathfrak{g})$ are deduced in \cite[Proposition 4.36]{ABS26}.

\medskip

\noindent(x) The structure of $H$ is given by the Cartier-Kostant-Gabriel theorem, \cite[Corollary 5.6.4, Theorem 5.6.5]{Mo93}.  Note first that both $U(\mathfrak{g})$ and $\mathbb{K}T$ are $AS$-regular, respectively of global dimensions $\mathrm{dim}_{\mathbb{K}}(\mathfrak{g})$ and $h(T)$, by parts (ii) and (iv). Hence the smash product $H$ is $AS$-regular with $\mathrm{gldim}(H) = \mathrm{dim}_{\mathbb{K}}(\mathfrak{g}) + h(T)$, by \cite[Proposition 7.3.1]{LM19}.
\end{proof}

\medskip

\begin{remarks}\label{xxrem3.7} (i) The hypothesis in part (i) that $H$ is affine may be redundant, since it is currently an open question whether every noetherian Hopf algebra is affine, even for PI Hopf algebras. In any case the hypothesis that $H$ is affine can be weakened to the assumption that
 $$(\ast) \qquad \textit{  every simple } H\textit{-module has finite } \mathbb{K}\textit{-dimension},$$
as was proved in \cite[Theorem 3.5]{WZ02}. Every affine PI $K$-algebra possesses property $(\ast)$ by \cite[Theorem 13.10.3(i)]{McCR87}. Some special cases of part (i) where $H$ is a finite module over its centre  were proved earlier; \cite[Theorem 1.14]{BG97}, \cite[Proposition 2.3]{Br97}.  But we note that a multitude of affine noetherian Hopf algebras have now been found which satisfy a PI but are \emph{not} finite over their centres - see \cite{ABS26}.

\medskip
\noindent(ii) Part (ii) of the theorem is a special case of part (vii), but of course was proved much earlier. In \cite[Theorem 5.6]{BGZ17} a connected Hopf $\mathbb{C}$-algebra of $GK$-dimension 5 is exhibited which is not isomorphic as an algebra to an enveloping algebra of a Lie algebra. Thus part (vii) strictly generalises part (ii). It should be noted that in \cite{HW25} a large family of connected noetherian Hopf algebras have been discovered which are not iterated Hopf Ore extensions of the enveloping algebras of their Lie algebras of primitive elements.

\medskip

\noindent(iii) Yakimov's result (vi) for $\mathcal{O}_q (G)$ was the culmination of a series of earlier special cases. Thus Lu, Wu and Zhang had shown \cite[Theorem 0.5]{LWZ09} that, for all $G$, $\mathcal{O}_q (G)$ satisfies $\mathbf{(\mathcal{F})}$ when $\mathbb{K} = \mathbb{C}(q)$, and earlier special cases had been obtained in \cite{LevSt93, BG97, GZ07}, with Yakimov's proof strategy building on that of \cite{GZ07}.\footnote{{\color{red} It's not clear to me that - for arbitrary $q$ - every detail here is covered in the literature. For example, for $G \neq GL(n,\mathbb{K})$ or $SLK(n.\mathbb{K})$, a proof of the exact value of $\mathrm{GKdim}(\mathcal{O}_q(G))$, needed for the displayed formula in (vi), seems hard to locate. And similar concerns apply to the finiteness of $\mathrm{gldim}(\mathcal{O}_q(G))$ for these groups $G$ - the authors of \cite{BG02} write in rather cavalier style on page 218 that "the general case is similar" [to the case $q$ transcendental].}}

\medskip

\noindent(iii) Regarding part (vii), it is not known whether every noetherian connected Hopf algebra in characteristic 0 has finite $GK$-dimension. Notice that the existence of an infinite dimensional Lie algebra with noetherian enveloping algebra would provide a counterexample, in view of \cite[Lemma 6.5]{KL00}. 

\medskip
\noindent (iv) It is natural to consider also the Krull-Cohen Macaulay condition. Of course by Theorem \ref{xxthm3.3}(i) this is satisfied by affine noetherian Hopf algebras $H$ satisfying a PI, since the Krull and GK-dimensions coincide in that setting by \cite[Corollary 10.16]{KL00}. But perhaps  Krull-Cohen Macaulay \emph{always} fails when the affine noetherian Hopf algebra $H$ is not PI? For example, let $H = U(\mathfrak{g})$ for a finite dimensional complex Lie algebra $\mathfrak{g}$. Suppose first that $\mathfrak{g}$ is semisimple, and suppose that $U(\mathfrak{g})$ is Krull-Cohen Macaulay, so providing the third equality in (\ref{xxeqn3.8}) below. By \cite{Le02}, $\mathrm{Kdim}(U(\mathfrak{g})) = \mathrm{dim}_{\mathbb{C}}(\mathfrak{b})$, where $\mathfrak{b}$ is a Borel subalgebra of $\mathfrak{g}$, yielding the fourth equality.  The first equality is Theorem \ref{xxthm3.3}(ii) and the second follows from Lemma \ref{xxlem2.4}(v). This yields the contradiction
\begin{equation}\label{xxeqn3.8}\mathrm{dim}_{\mathbb{C}}(\mathfrak{g}) \; = \; \mathrm{gldim}(U(\mathfrak{g})) \; = \; \mathrm{Kdim}({}_{\varepsilon}\mathbb{C}) +  j({}_{\varepsilon}\mathbb{C})  \; = \; \mathrm{Kdim}(U(\mathfrak{g})) \; = \; \mathrm{dim}_{\mathbb{C}}(\mathfrak{b}) . 
\end{equation}
On the other hand let $\mathfrak{g}$ be the 2-dimensional complex solvable non-abelian Lie algebra, spanned by $x$ and $y$ with $[y,x] = x$. Take $V = U(\mathfrak{g})/U(\mathfrak{g})(x - 1)$, easily shown to be a simple module with projective dimension 1. Since $\mathfrak{g}$ is soluble, $\mathrm{Kdim}(U(\mathfrak{g})) = \mathrm{dim}_{\mathbb{C}}(\mathfrak{g})$ by \cite[Theorem 6.6.2]{McCR87}, so the Krull version of (\ref{xxeqn2.2}) yields the contradiction 
$$    1 \; = \; 1 + 0 \; = \; j(V) + \mathrm{Kdim}(V) \; = \; \mathrm{Kdim}(U(\mathfrak{g})) \; = \; 2.$$
A similar calculation shows that the enveloping algebra of the 3-dimensional Heisenberg Lie algebra over $\mathbb{C}$ is not Krull-Cohen Macaulay.

\medskip

\noindent(v) Le Meur's smash product result used in the proof of part (x) is stated in terms of the skew Calabi-Yau property, but this is equivalent to $AS$-regularity for noetherian Hopf $\mathbb{K}$-algebras thanks to \cite[Lemma 1.3]{RRZ14}. We'll review this equivalence in detail in $\S$\ref{xxsubsec4.2}.

\medskip

\noindent (vi) In \cite{RWZ21} it is shown that a weak Hopf algebra $H$ which is a finitely generated module over its affine centre is Auslander-Gorenstein, $GK$-Macaulay and $AS$-Gorenstein.  Then, in \cite{RWZ21} and in its sequel \cite{RWZ26}, it is proved that some of the structural properties described in Theorem \ref{xxthm3.9} below, for affine noetherian Hopf $\mathbb{K}$-algebras satisfying a polynomial identity, are also possessed (in a suitably generalised form) by weak Hopf algebras which are finite modules over their centres. Clearly one should cosider whether some of these conclusions may remain true for other classes of noetherian weak Hopf algebras. 

\end{remarks}

\bigskip

\subsection{Noetherian Hopf algebras satisfying a polynomial identity}\label{subsectPI} In this subsection we give further details about the PI case covered by Theorem \ref{xxthm3.3}(i).  It is perhaps worth noting from the signposts given below to the proof of Theorem \ref{xxthm3.9} that the structure theory for an affine noetherian PI Hopf algebra of finite \emph{global} dimension was known before the appearance of the seminal paper \cite{WZ02}, which extended our knowledge to the non-regular case. 

Recall also that a Frobenius algebra of finite global dimension is semisimple, so that part (iii) of the theorem below is a generalisation of the famous result of Larson and Radford \cite{LR88} that, in characteristic 0, finite dimensional involutory Hopf algebras are semisimple.

\begin{theorem}\label{xxthm3.9} Let $\mathbb{K}$ be an arbitrary field, and let $H$ be an affine noetherian Hopf $\mathbb{K}$-algebra satisfying a polynomial identity. Denote the centre of $H$ by $Z$.
\begin{enumerate}
\item[(i)] Suppose that $\mathrm{gldim}(H)$ is finite, say $\mathrm{gldim}(H) = d$. 
\begin{itemize}
\item[$(a)$] $H$ is a finite direct sum of prime algebras, $H = \oplus_{i=1}^t H/P_i$. 
\end{itemize}
For $i = 1, \ldots, n$, denote by $H_i$ the (uniquely determined) summand of $H$ such that $H = H_i \oplus P_i$. Set $Z_i := Z \cap H_i$, so $Z_i = Z(H_i)$ and $Z = \oplus_{i = 1}^n Z_i$.
\begin{itemize}
\item[$(b)$] For each $i = 1, \ldots , n$, $H_i$ is a finitely generated Cohen Macaulay module over the integrally closed affine domain $Z_i$ and so $H$ is a finitely generated $Z$-module.
\item[$(c)$] For each $i = 1, \ldots , n$, $H_i$ is an Auslander regular, $AS$-regular and $GK$-Cohen Macaulay prime affine noetherian PI $\mathbb{K}$-algebra with
$$ \mathrm{gldim}(H_i) \; = \;  \mathrm{GKdim}(H_i) \; = \; d. $$
The same conclusions therefore apply to $H$.
\item[(d)] Every maximal ideal of $H$ or of  $H_i$ has height $d$.
\item[(e)] If $C$ is a commutative regular subalgebra of $H$ [resp. of $H_i$] over which $H$ [resp $H_i$] is a finitely generated module, then $H$ [resp.$H_i$] is a projective $C$-module.
\end{itemize}
\item[(ii)] Suppose that $\mathrm{gldim}(H) = \infty$. 
\begin{itemize}
\item[(a)] Even if $\mathbb{K}$ has characteristic 0, $H$ may not be semiprime. Even if $H$ is prime, it may not be a finite $Z$-module.
\item[(b)] Consider the three statements:
\begin{itemize}
\item[$(A)$] $Z$ is noetherian;
\item[$(B)$] $Z$ is an affine $\mathbb{K}$-algebra;
\item[$(C)$] $H$ is a finitely generated $Z$-module.
\end{itemize}
Then $(C)\Longrightarrow (B) \Longrightarrow (A)$. If $H$ is semiprime then $(A), \, (B)$ and $(C)$ are equivalent. 
\end{itemize}

\item[(iii)] Assume that $\mathbb{K}$ has charactersitic 0, and suppose that $H$ is involutory (that is, $S^2 = \mathrm{id}_H$). Then the following are equivalent:
\begin{itemize}
\item[(A)] $\mathrm{gldim}(H) < \infty$;
\item[(B)]  $H$  is a finitely generated $ Z$-module or $H$ is semiprime;
\item[(C)] $H$  is a finitely generated $ Z$-module and $H$ is semiprime.
\end{itemize}
\end{enumerate}
\end{theorem}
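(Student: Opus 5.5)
The plan is to start from Theorem \ref{xxthm3.3}(i): $H$ is Auslander Gorenstein, $AS$-Gorenstein and $GK$-Cohen Macaulay. Throughout I will combine this with the standard theory of affine noetherian PI algebras: $H$ is finitely generated over $Z$ in good cases, Krull and $GK$-dimensions agree, and prime ideals have the expected heights.

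For part (i) I would work in the following order.

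\textbf{Step (c) first, for $H$.} If $\mathrm{gldim}(H)=d<\infty$, then $H$ is Auslander regular. By Lemma \ref{xxlem2.4}(v) it is $AS$-regular, and by Lemma \ref{xxlem2.4}(i) every non-zero finite dimensional module has projective dimension $d$.

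\textbf{Step (d).} Every simple module $V$ is finite dimensional, by Kaplansky's theorem for affine PI algebras. The $GK$-Cohen Macaulay property then gives $j(V)=\mathrm{GKdim}(H)$. By (AS2), $j(V)=d$, so $\mathrm{GKdim}(H)=d$. Since Krull and $GK$-dimension coincide here, every maximal ideal has height $d$.

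\textbf{Step (a).} An Auslander regular noetherian PI algebra is a finite direct sum of prime rings; this is the Stafford--Zhang / Teo type theorem, which I would simply invoke. Each summand $H_i$ inherits Auslander regularity, the $GK$-Cohen Macaulay property and (d), because modules over $H_i$ are $H$-modules and $\mathrm{Ext}$ groups split along the block decomposition. This completes (c) for each $H_i$.

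\textbf{Step (b).} Each $H_i$ is a prime Auslander regular, Cohen Macaulay affine PI algebra. Results of Stafford--Zhang show such an algebra is a maximal order, with centre $Z_i$ integrally closed and $H_i$ finite over $Z_i$. The main obstacle is establishing finiteness over the centre; here I expect to quote \cite{StZh94} together with Brown--Goodearl. Once finiteness holds, the Cohen Macaulay property of $H_i$ as a $Z_i$-module follows from the $GK$-Cohen Macaulay property by a local Ext/grade comparison.

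\textbf{Step (e).} If $C$ is commutative regular and $H$ is finite over $C$, then $H$ is a maximal Cohen Macaulay $C$-module: depth equals dimension at each maximal ideal by (d) and the Cohen Macaulay property. Auslander--Buchsbaum over the regular local rings $C_{\mathfrak m}$ then gives that $H$ is projective over $C$.

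For part (ii)(a) I would give examples rather than a proof. Non-semiprime examples come from group algebras $\mathbb{K}G$ with $G$ finite in positive characteristic, or, in characteristic $0$, from finite dimensional non-semisimple Hopf algebras such as Taft algebras. A prime PI Hopf algebra not finite over its centre comes from the examples of \cite{ABS26}.

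For (ii)(b), the implication (C)$\Rightarrow$(B) is the Artin--Tate lemma, and (B)$\Rightarrow$(A) is Hilbert's basis theorem. In the semiprime case, (A)$\Rightarrow$(C) would use the fact that a semiprime noetherian PI ring with noetherian centre, which is Auslander Gorenstein of finite injective dimension, has its central localisation with finite $Z$-module structure. I would argue this via the trace ring being a finite module over a central subring and then use noetherianity of $Z$ to descend.

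For (iii) the key input is the case $\dim H<\infty$, namely Larson--Radford. The general statement should follow from a ``Larson--Radford'' theorem for $AS$-Gorenstein Hopf algebras: if $S^2=\mathrm{id}$ in characteristic $0$, the integral $\int^\ell_H$ relates to the Nakayama automorphism. This argument yields regularity once $H$ is semiprime or finite over $Z$. From there:
\begin{itemize}
\item (A)$\Rightarrow$(C) is part (i);
\item (C)$\Rightarrow$(B) is trivial;
\item (B)$\Rightarrow$(A) is the hardest direction, where I would reduce to a finite dimensional Hopf quotient at the augmentation ideal and apply Larson--Radford there to deduce semisimplicity, hence finite global dimension via Lemma \ref{xxlem2.4}(i).
\end{itemize}
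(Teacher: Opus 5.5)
Most of your outline tracks the paper: Theorem~\ref{xxthm3.3}(i) with Lemma~\ref{xxlem2.4} for regularity, Stafford--Zhang for the prime decomposition and finiteness over the centre, Artin--Tate and a grade/depth comparison for (b) and (e), Sweedler/Taft and \cite{ABS26} for (ii)(a), Artin--Tate, Hilbert and the trace-ring argument for (ii)(b), and part (i) for (A)$\Rightarrow$(C) in (iii). Two steps in (i) need repair.

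\textbf{Part (i)(a).} The structure theorem, as you state it, is false. The algebra of upper triangular $2\times 2$ matrices over $\mathbb{K}$ is noetherian, PI, hereditary and Auslander regular, but it is not a direct sum of prime rings. Stafford--Zhang need an extra hypothesis: Cohen--Macaulay, or the homogeneity supplied by Lemma~\ref{xxlem2.4}(i). You do have this, so say so.

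\textbf{Part (i)(d).} $\mathrm{Kdim}(H)=d$ only bounds heights from above; it does not stop a maximal ideal having smaller height. After (a) you need three things. First, every summand has $\mathrm{GKdim}(H_i)=d$: since $H_i$ is a projective $H$-module, $j_H(H_i)=0$, and the GK-Cohen Macaulay property applies. Second, Schelter's theorem that every maximal ideal of an affine prime PI algebra of GK-dimension $d$ has height $d$. Third, the minimal primes are comaximal, so each maximal ideal of $H$ contains exactly one $P_i$.

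\textbf{Part (iii), (B)$\Rightarrow$(A): this is the serious gap.} Nothing in (B) supplies the ``finite dimensional Hopf quotient at the augmentation ideal'' you want to feed to Larson--Radford. $Z$ is not in general a Hopf subalgebra, so $(\ker\varepsilon\cap Z)H$ need not be a Hopf ideal. For example, take the infinite dihedral group $D=\langle a,b\rangle$ with $\mathrm{char}\,\mathbb{K}=0$. Then $\mathbb{K}D$ is involutory, regular and free of rank $4$ over $Z=\mathbb{K}[a+a^{-1}]$. Yet $\mathbb{K}D/(a+a^{-1}-2)\mathbb{K}D$ is a $4$-dimensional algebra that is not semisimple and is not a Hopf quotient.

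Even granting a Hopf ideal $I$ of finite codimension with $H/I$ semisimple, you would only learn that ${}_{\varepsilon}\mathbb{K}$ is projective over $H/I$. To conclude $\mathrm{prdim}_H({}_{\varepsilon}\mathbb{K})<\infty$ you need $\mathrm{prdim}_H(H/I)<\infty$, for instance $I=\mathfrak{m}_CH$ with $H$ flat over a regular Hopf subalgebra $C$. Such flatness is normally a consequence of regularity (compare (i)(e)), not an input.

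Your reduction also does not touch the semiprime branch of (B), where finiteness over $Z$ is not yet known. The remark about $\int^{\ell}_H$ and the Nakayama automorphism is not an argument. The paper obtains (B)$\Rightarrow$(A) by citing Wu and Zhang \cite[Theorems 0.1, 0.2]{WZ02a}; that theorem, or a genuine replacement for it, is the missing ingredient.
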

\begin{proof}(i) For a noetherian PI Hopf algebra $H$ with $\mathrm{gldim}(H) < \infty$, Auslander regularity and the $GK$-Cohen Macaulay property were known before the more general results of Theorem \ref{xxthm3.3}(i). Namely these conclusions were assured  by \cite[Corollary 1.6, page 246]{BG97}, which is an immediate consequence of the homogeneity given by Lemma \ref{xxlem2.4}(i) combined with  \cite[Proposition 5.2 and Theorem 5.4(i)]{StZh94}. The $AS$-regularity of $H$ then follows from Lemma \ref{xxlem2.4}(v). 
\medskip

\noindent(a) Given the above, $H$ is a finite direct sum of prime algebras $H_i$ by \cite[Theorem 5.4(i)]{StZh94}. 
\medskip

\noindent(b), (c) The prime summands $H_i$ all have the same $GK$-dimension as $H$ itself, namely $d$,  thanks to the $GK$-Cohen Macaulay property coupled with the fact that the $H_i$ are projective $H$-modules. It is an easy exercise to then check that the homological properties  of $H$ are inherited by the summands $H_i$. Since $H$ is assumed to be affine, so are its images $H_i$, so the finite generation of the $Z_i$-module $H_i$ is given by  \cite[Theorem 5.4(iii)]{StZh94}. The Artin-Tate Lemma \cite[Lemma 13.9.10]{McCR87} then yields the fact that $Z_i$ is affine for all $i$. By \cite[Theorem 5.4(iii)]{StZh94} $H_i$ is equal to its own trace ring and hence $Z_i$ is integrally closed. Finally, that $H_i$ is a Cohen Macaulay $Z_i$-module is a consequence of the GK-Cohen Macaulay property, by \cite[Theorem 4.8]{BrMcL17}.
\medskip

\noindent(d) For every $i$, since $H_i$ is $\mathbb{K}$-affine prime PI with $\mathrm{GKdim}(H_i) = d$, all its maximal ideals have height $d$ by Schelter's theorem \cite[Theorem 13.10.12]{McCR87}. Since distinct minimal primes of $H$ are comaximal, each maximal ideal $M$ of $H$ contains a unique minimal prime, so the height of $M$ is also $d$.
\medskip

\noindent(e) Since $H_i$ is a Cohen Macaulay $Z_i$-module and all its maximal ideals have the same height by (d), this follows from \cite[Theorem 3.7]{BrMcL17}. The same argument applies to $H$.

\medskip

\noindent(ii)(a) The first claim is clear - the 4-dimensional Sweedler $\mathbb{C}$-algebra is the smallest such example. For the second, see the example discussed in \cite[Theorem 5.3]{ABS26}.

\noindent(b) $(C)\Longrightarrow (B)$ is the Artin-Tate Lemma \cite[Lemma 13.9.10(ii)]{McCR87}, and $(B)\Longrightarrow (A)$ is a consequence of Hilbert's Basis Theorem. When $H$ is semiprime $(A)\Longrightarrow (C)$ is given by \cite[Corollary 13.6.14]{McCR87}.

\medskip

\noindent(iii) $(A) \Longrightarrow (C)$ is a special case of (i)(a) and (i)(b), and $(C)\Longrightarrow (B)$ is trivial. Finally, $(B)\Longrightarrow (A)$ follows from  \cite[Theorems 0.1, 0.2]{WZ02a}.
\end{proof}

\medskip.

Note that part (iii) of the theorem shows that, when $H$ is involutory, the converse to part (i)(b) is true. This is certainly not the case in general, but it suggests the following question, first stated in 2002: 

\begin{question}\label{nonfgregular}(Wu, Zhang, \cite[Question 0.3]{WZ02a}) If $\mathbb{K}$ has characteristic 0 and $H$ is an affine or noetherian involutory Hopf $\mathbb{K}$-algebra, is $H$ regular?
\end{question} 


\bigskip
\subsection{Hopf algebras of low $GK$-dimension}\label{subsectlow} Throughout this subsection, assume that
$$ (\mathcal{H}) \qquad \mathbb{K} \textit{ is an algebraically closed field of characteristic } 0. $$
Some, but not all, of the results here are valid without assuming $(\mathcal{H})$. 

Since finite dimensional Hopf algebras are Frobenius algebras \cite{LS69}, \cite[Theorem 12.5]{Lo18} of $GK$-dimension 0, they are trivially Auslander Gorenstein, $AS$-Gorenstein and $GK$-Cohen Macaulay, so it follows that  they are Auslander-regular if and only if they are semisimple. 
\medskip

\noindent{\bf $GK$-dimension one.} Let's now consider an affine noetherian Hopf $\mathbb{K}$-algebra $H$ of $GK$-dimension 1. By \cite{SSW85} every affine algebra of $GK$-dimension 1 satisfies a polynomial identity. Hence,  by Theorem \ref{xxthm3.3}(i), $H$ is Auslander Gorenstein, $AS$-Gorenstein and $GK$-Cohen Macaulay, and so in particular $\mathrm{injdim}(H) = 1$. If $H$ is in addition semiprime then it is a finite module over its centre by \cite{SSW85} (and the noetherian hypothesis can be omitted, as it follows here from $H$ being affine). Importantly, the semiprime setting includes the case where $H$ is $AS$-regular in view of Theorem \ref{xxthm3.9}(i)(a),(b).

The prime affine noetherian $AS$-regular Hopf $\mathbb{K}$-algebras of $GK$-dimension 1 have in fact been classified, by the combined efforts of \cite{LWZ07}, \cite{BZ10} and \cite{WLD16}; for a summary of this classification, see \cite[$\S$1.3]{BZ21}. 

However, there also exist prime affine noetherian Hopf $\mathbb{K}$-algebras of $GK$-dimension 1 which are \emph{not} regular - a first such example was given as \cite[Example 7.3]{BZ10}, and significant further progress towards a classification of the non-regular algebras was made in \cite{Li20}. Namely, Liu in \cite{Li20} introduces two further hypotheses which he imposes on a prime affine noetherian Hopf $\mathbb{K}$-algebra. Both of these hypotheses are implied by regularity, but not conversely. He then classifies the prime affine noetherian Hopf $\mathbb{K}$-algebras which satisfy these two additional conditions, finding in the process a number of new infinite families of examples, including non-pointed ones. At the time of writing it is not known whether yet more examples remain to be discovered. 

Returning to the regular case, if we omit the hypothesis that $H$ is prime then much less is currently known. Thus, let's consider a regular affine noetherian but \emph{not} prime Hopf $\mathbb{K}$-algebra $H$ with $\mathrm{GKdim}(H) = 1$. Then Theorem \ref{xxthm3.9}(i) implies that $H$ is hereditary, (that is $\mathrm{gldim}(H) = 1$), and that $H$ is semiprime, even a finite  direct sum of prime algebras,
\begin{equation}\label{Hsplit} H \; = \; \bigoplus_{i=0}^n H_i, \qquad H_i \; = \; H/P_i,
\end{equation}
where $P_0, \ldots , P_n$ are the minimal prime ideals of $H$. Each $H_i$ is an affine noetherian hereditary prime algebra of $GK$-dimension one, and is a finite module over its centre $Z_i$, which is an affine Dedekind domain. For $i = 0, \ldots , n$ we can denote by $e_i \in H_i$ the primitive central idempotents of $H$, so that $P_i = \sum_{j \neq i}He_j$ for all $i$. By considering the images of the $e_i$ in ${}_{\varepsilon}\mathbb{K} = H/\mathrm{ker}\varepsilon$ we see that there is precisely one of the $e_i$ not mapped to 0. Let's label this idempotent $e_0$; equivalently,
\begin{equation}\label{soloP}  P_0 \textit{ is the unique minimal prime of } H \textit { in }\mathrm{ker}\varepsilon.
\end{equation}
We now have the following result, parts (i) and (ii) of which are \cite[Theorem 6.5]{LWZ07}.

\begin{theorem}\label{GKoneregularthm} Let $H$ be a regular affine noetherian Hopf $\mathbb{K}$-algebra with $\mathrm{GKdim}(H) = 1$, and retain the notation introduced in the previous paragraph.
\begin{enumerate}
\item[(i)] $P_0$ is a Hopf ideal, so that $H_0$ is a Hopf algebra listed in the classification of prime regular affine noetherian Hopf $\mathbb{K}$-algebras of $GK$-dimension one.
\item[(ii)] The coinvariant subalgebra $H^{\mathrm{co}(H/P_0)}$ contains all finite dimensional normal Hopf subalgebras of $H$.
\item[(iii)] $H$ is a left and right flat $H^{\mathrm{co}(H/P_0)}$-module.
\end{enumerate}
Suppose henceforth that $\mathrm{dim}_{\mathbb{K}} (H^{\mathrm{co}(H/P_0)}) < \infty$.
\begin{enumerate}
\item[(iv)]  $H^{\mathrm{co}(H/P_0)}$ is a Frobenius algebra, and $H$ is a free left and right $H^{\mathrm{co}(H/P_0)}$-module.
\item[(v)] The trivial module ${}_{\varepsilon} \mathbb{K}$ is a (left and right) projective $H^{\mathrm{co}(H/P_0)}$-module.
\item[(vi)] If $H^{\mathrm{co}(H/P_0)}$ is a Hopf algebra, then it is semisimple artinian.
\end{enumerate}
\end{theorem}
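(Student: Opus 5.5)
Parts (i) and (ii) are \cite[Theorem 6.5]{LWZ07}, so I would cite them and concentrate on (iii)–(vi). Write $\pi : H \to H_0 = H/P_0$ for the Hopf quotient map given by (i), and set $A := H^{\mathrm{co}(H/P_0)} = \{h \in H : \sum h_1 \otimes \pi(h_2) = h \otimes 1\}$, a left coideal subalgebra of $H$.

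For (iii) I would use the faithful (co)flatness theory of Takeuchi and Masuoka--Wigner. The point is that $H_0$ is a direct summand of $H$ as an $H$-bimodule, since $H = H_0 \oplus P_0$ by (\ref{Hsplit}) and $H_0 = He_0$. Hence $H$ is (faithfully) coflat as an $H_0$-comodule: the map $h \mapsto \pi(h)e_0$-style splitting gives a comodule retraction. Concretely, I would check that $e_0$ produces an $H_0$-colinear projection, so that $H$ is injective as a right $H_0$-comodule. Takeuchi's correspondence then gives that $H$ is faithfully flat over $A$ on the left. The right-hand version would follow from the bijectivity of $S$; for affine noetherian PI Hopf algebras I would take this as available from \cite{Sk06}, or else deduce it directly from the regular structure. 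This coflatness check is the step I expect to be the main obstacle. The subtle issue is whether the idempotent splitting is compatible with the coaction. If this fails, I would instead use that $H_0$ is a prime regular Hopf algebra of GK-dimension one, hence cosemisimple modulo finite parts in the classification; alternatively, I would invoke Skryabin's result \cite{Sk06} that a noetherian Hopf algebra is flat over its coideal subalgebras under suitable hypotheses.

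Now suppose $\dim_{\mathbb{K}} A < \infty$. For (iv), Skryabin's theorem that finite-dimensional coideal subalgebras of Hopf algebras are Frobenius, together with freeness of $H$ over them, gives the first claim directly. Freeness can also be obtained from (iii): a finitely generated flat module over a finite-dimensional algebra is projective, and the faithfully flat Hopf--Galois extension $A \subseteq H$ makes $H$ a generator. Combining this with the fact that $A$ is Frobenius, and with a Krull--Schmidt count of indecomposable projectives via $H \otimes_A \mathbb{K} \cong H_0$, yields freeness.

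For (v), I would note that $A \cap P_0$ is the kernel of the counit restricted suitably, and that $A e_0 = \mathbb{K}e_0$: indeed $\pi(a) = \varepsilon(a)\pi(1)$ for $a \in A$. Thus $e_0 \in A$, since $\pi(e_0)=1$ and $e_0$ is coinvariant because $\Delta(e_0)$ maps into $1\otimes 1$ modulo the decomposition. Then ${}_{\varepsilon}\mathbb{K} \cong Ae_0$ is a direct summand of $A$, hence projective, on both sides. For (vi), if $A$ is a finite-dimensional Hopf algebra whose trivial module is projective, then Maschke's theorem for Hopf algebras (Larson--Sweedler; existence of an integral with $\varepsilon \neq 0$) gives that $A$ is semisimple artinian.
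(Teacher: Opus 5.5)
The genuine gap is in (iii). For (i), (ii) and (vi) you argue essentially as the paper does. For (iv) the citation \cite[Theorem 6.1]{Sk07} is what does the work; it applies because noetherian $H$ is weakly finite. Your alternative via ``finitely generated flat'' modules does not apply, since $H$ is not finitely generated over a finite-dimensional $A$.

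\textbf{The gap in (iii).} The decomposition $H = He_0 \oplus P_0$ is a splitting of the \emph{algebra}. By itself it says nothing about the coaction $\rho = (\mathrm{id}\otimes\pi)\Delta$. The most the idempotent gives is the map $\phi : H_0 \to H$, $\pi(h)\mapsto he_0$, which is $H_0$-colinear exactly when $e_0$ is coinvariant. Since $\pi\circ\phi = \mathrm{id}$, this makes $H_0$ a comodule direct summand of $H$. That is the wrong direction: $H_0$ is trivially an injective comodule over itself, so it tells you nothing. Coflatness of $H$ needs $H$ to be a summand of a cofree comodule $V\otimes H_0$. Equivalently, by Doi's criterion for comodule algebras, you need a colinear map $H_0\to H$ sending $1$ to $1$, whereas $\phi(1)=e_0\neq 1$ when $H$ is not prime. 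Nothing is said about $P_0$ as an $H_0$-comodule, which is where the difficulty lies.

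Your fallbacks do not close this. Cosemisimplicity of $H_0$ is not available: $H_0=\mathbb{K}[x]$ occurs, for $H=\mathbb{K}[x]\otimes\mathbb{K}F$ with $F$ a finite group. ``Skryabin under suitable hypotheses'' omits exactly the point. The paper's missing idea is this. $H$ is a finitely generated module over its semiprime affine noetherian centre by Theorem \ref{xxthm3.9}(i)(b), hence residually finite dimensional. For such Hopf algebras, flatness over $H^{\mathrm{co}(H/P_0)}$ is \cite[Theorem 0.1]{Sk21}.

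\textbf{Part (v): a different route with an unproved step.} The paper restricts the length-one projective $H$-resolution of ${}_{\varepsilon}\mathbb{K}$ to $A$ using the freeness in (iv), then splits it because $A$ is Frobenius. You instead note that $\pi(a)=\varepsilon(a)$ for $a\in A$. Via $He_0\cong H/P_0$ this gives $ae_0=e_0a=\varepsilon(a)e_0$, so $e_0$ is a normalised central integral of $A$. Then ${}_{\varepsilon}\mathbb{K}\cong Ae_0$ and $\mathbb{K}_{\varepsilon}\cong e_0A$ are direct summands of $A$. This is more elementary, does not need $\dim_{\mathbb{K}}A<\infty$, and gives (vi) directly by Maschke.

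However, it hinges on $e_0\in A$, and ``$\Delta(e_0)$ maps into $1\otimes 1$ modulo the decomposition'' is not an argument. The claim is true; here is one proof.
\begin{itemize}
\item Let $V$ be an $H$-module with $e_0V=0$, let $M$ be an $H_0$-module, and put $W:=e_0(V\otimes M)$.
\item Give $\mathrm{Hom}_{\mathbb{K}}(M,W)$ the $H$-action $(hf)(m)=\sum h_1f(S(h_2)m)$. There is an injection $\mathrm{Hom}_H(V\otimes M,W)\hookrightarrow\mathrm{Hom}_H(V,\mathrm{Hom}_{\mathbb{K}}(M,W))$.
\item The $H$-linear projection $V\otimes M\to W$ is sent to a map into $\mathrm{Hom}_{\mathbb{K}}(M,W)$. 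This module is annihilated by the Hopf ideal $P_0$, using $\Delta(P_0)\subseteq P_0\otimes H+H\otimes P_0$ and $S(P_0)\subseteq P_0$. So $e_0$ acts on it as $1$.
\item Any $H$-map from $V$ into such a module is zero, since $e_0V=0$. Hence the projection is zero and $W=0$.
\item Apply this with $V=H$ and $M=H_0$. Then $H(1-e_0)\otimes H_0$ is killed by $e_0$, and $He_0\otimes H_0$ is killed by $P_0$.
\item So $\rho(e_0)$ and $e_0\otimes 1$ act identically by left multiplication on $H\otimes H_0$. Evaluating at $1\otimes 1$ gives $\rho(e_0)=e_0\otimes 1$.
\end{itemize}
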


\begin{proof}(iii) Because $H$ is a finite module over its semiprime affine noetherian centre, it is easy to see that $H$ is residually finite dimensional. So (iii) is given by \cite[Theorem 0.1]{Sk21}.
\medskip

\noindent (iv) If $H^{\mathrm{co}(H/P_0)}$ is finite dimensional, then both claims in (iv) are given by \cite[Theorem 6.1]{Sk07}, noting that the hypotheses of that result are satisfied since $H$ is weakly finite because it is noetherian. 
\medskip

\noindent (v) The same proof applies on either side. Since $H$ is regular by hypothesis, $\mathrm{prdim}_H ({}_{\varepsilon} \mathbb{K}) = 1 = \mathrm{gldim} (H)$. Thanks to the second claim in (iv) we can restrict a projective $H$-resolution of ${}_{\varepsilon}\mathbb{K}$ to obtain an $H^{\mathrm{co}(H/P_0)}$-resolution, whence 
$$\mathrm{prdim}_{H^{\mathrm{co}(H/P_0)}}({}_{\varepsilon}\mathbb{K}) \; \leq \; 1.$$
But $H^{\mathrm{co}(H/P_0)}$ is a Frobenius algebra by the first part of (iv), so projective $H^{\mathrm{co}(H/P_0)}$-modules are injective. Hence the projective resolution of ${}_{\varepsilon} \mathbb{K}$ splits, forcing ${}_{\varepsilon}\mathbb{K}$ to be projective.

\medskip
\noindent (vi) This follows from (v) and Lemma \ref{xxlem2.4}(i).
\end{proof}

\begin{remarks}\label{GKoneregularrmks} (i) The proofs in \cite{LWZ07} of Theorem \ref{GKoneregularthm}(i),(ii)  make ingenious use of the left integral $\int^{\ell}_H$ and of the theory of links and cliques of prime ideals, \cite[Chapters 12, 14]{GW04}. It seems reasonable to hope that the theorem and its proof might be signposts towards a future better understanding of regular affine noetherian PI Hopf $\mathbb{K}$-algebras of arbitrary $GK$-dimension.

\medskip
\noindent (ii) As pointed out in \cite{LWZ07}, there are obvious parallels between Theorem \ref{GKoneregularthm} and the elementary theory of affine algebraic groups over $\mathbb{K}$. Thus if $H = \mathcal{O}(G)$ is the coordinate ring of an affine algebraic $\mathbb{K}$-group $G$ then the connected component of $G_{conn}$ of $G$ is a normal subgroup of finite index in $G$ with $\mathcal{O}(G_{conn})$ a domain, and there is an exact sequence of commutative Hopf algebras
$$ 0 \longrightarrow (\mathbb{K}(G/G_{conn}))^{\ast} \longrightarrow \mathcal{O}(G) \longrightarrow \mathcal{O}(G_{conn}) \longrightarrow 0. $$

There are similar parallels with (not necessarily PI) noetherian group algebras: let G be a group and $\mathbb{K}$ a field such that $\mathbb{K}G$ is noetherian. Then there is a unique largest finite normal subgroup of $G$, denoted $\Delta^+ (G)$, so that $G/\Delta^+ (G)$ has no non-trivial finite normal subgroups. Hence $\mathbb{K}(G/\Delta^+(G))$ is noetherian and prime, by Connel's theorem, \cite[Theorem 4.2.10]{Pa77}. Let $I$ denote the augmentation ideal of $\mathbb{K}\Delta ^+(G)$. Then $\mathbb{K}G/I\mathbb{K}G \cong \mathbb{K}(G/\Delta^+ (G))$, so that $I\mathbb{K}G$ is a minimal prime ideal and a Hopf ideal of $\mathbb{K}G$, and there is an exact sequence of cocommutative Hopf algebras
$$ 0 \longrightarrow \mathbb{K}\Delta^+(G) \longrightarrow \mathbb{K}G \longrightarrow \mathbb{K}(G/\Delta^+(G)) \longrightarrow 0. $$
I presume that with a little care and the application of the Cartier-Kostant-Gabriel structure theorem, a generalisation of this observation could be obtained for \emph{all} cocommutative noetherian Hopf $\mathbb{K}$-algebras, (recalling that in $\S$\ref{subsectlow} $\mathbb{K}$ has characteristic 0). 
\medskip

\noindent (iii) In the light of Theorem \ref{GKoneregularthm} and Remarks \ref{GKoneregularrmks}(i),(ii), we list here some questions specific to this setting, many of them already suggested in \cite[Remark 6.6]{LWZ07}. It seems to me very surprising that Question \ref{GKoneregqns}(ii) appears still to be unanswered, almost 20 years after the publication of \cite{LWZ07}.

\begin{questions}\label{GKoneregqns} Retain the notation of Theorem \ref{GKoneregularthm}
\begin{enumerate}
\item[(i)] Is  $H^{\mathrm{co}(H/P_0)}$ a Hopf subalgebra of $H$? Is it normal?
\item[(ii)] Is $\mathrm{dim}_{\mathbb{K}}(H^{\mathrm{co}(H/P_0)}) < \infty$?
\item[(iii)] Find interesting examples! In particular, are there examples of Hopf algebras $H$ fitting the template of Theorem \ref{GKoneregularthm} which are not simply group algebras, or tensor products of a finite dimensional semisimple Hopf algebra and a prime regular affine noetherian Hopf algebra of $GK$-dimension one?
\item[(iv)] To what extent is Theorem \ref{GKoneregularthm} a special case of results applying to all regular affine noetherian PI Hopf $\mathbb{K}$-algebras?
\end{enumerate}
\end{questions}
\end{remarks}

\bigskip
\noindent{\bf $GK$-dimension two.} The prime affine noetherian Hopf $\mathbb{K}$-algebras of $GK$-dimension 2 are currently far from being classified.  The first significant progress was made by Goodearl and Zhang \cite{GZ10} who classified noetherian Hopf $\mathbb{K}$-algebras $H$ satisfying the following hypotheses:
\begin{equation}\label{xxeqnGK2}  H \textit{ is a domain, } \mathrm{GKdim}(H) = 2, \; \; \mathrm{Ext}^1_{H}(\mathbb{K},\mathbb{K}) \neq 0. 
\end{equation}
In more geometric, more intuitive but less precise language, if $H$ satisfies $(\ref{xxeqnGK2})$ then it is a connected 2-dimensional quantum group with a classical subgroup of dimension at least 1. Recalling that in this subsection $\mathbb{K}$ is algebraically closed of characteristic 0, the main result of \cite{GZ10} can be summarised as follows:

\begin{theorem} \label{GKtwothm}{\rm(Goodearl-Zhang, \cite[Theorem 0.1]{GZ10})} Let $H$ be a Hopf $\mathbb{K}$-algebra satisfying $(\ref{xxeqnGK2})$. Then
\begin{enumerate}
\item[(i)] $H$ is noetherian if and only if it is affine.
\item[(ii)] If $H$ is noetherian it is isomorphic to one of the following:
\begin{itemize}
\item[(a)] the group algebras $\mathbb{K}(C_{\infty} \times C_{\infty})$ and $\mathbb{K}(C_{\infty}\rtimes C_{\infty})$;
\item[(b)] the enveloping algebras of the 2 Lie $\mathbb{K}$-algebras of dimension 2;
\item[(c)] a member of one of 3 infinite families:
\begin{itemize}
\item[$(\bullet)$] $A = A(n,q) := \mathbb{K}\langle x^{\pm 1},y \, : \, xy = qyx \rangle $, $n \in \mathbb{Z}, \, n \geq 1$, $q \in \mathbb{K} \setminus \{0\}$, $x$ group-like and $y$ $(x^n,1)$-skew primitive;
\item[$(\bullet)$] $B = B(n, p_0, \ldots , p_s, q)$, $n, p_0, \ldots , p_s$ positive integers with $s \geq 2$, and $1< p_1 < \cdots < p_s$, with $p_0 | n$ and $p_0, p_1, \ldots , p_s$ pairwise relatively prime, $q \in \mathbb{K}$ a primitive $\ell$th root of unity where $\ell = (n/p_0)p_1 \cdots p_s$;
\item[$(\bullet)$] $C = C(n) : = \mathbb{K}[y^{\pm 1}][x; (y^n - y)\partial/\partial y]$, with $y$ group-like and $x$ $(1, y^{n-1})$-skew primitive, $n \in \mathbb{Z},\, n \geq 2$. 
\end{itemize}
\end{itemize}
\end{enumerate}
\end{theorem}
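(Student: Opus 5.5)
The plan follows the strategy of Goodearl and Zhang. Throughout, $H$ is a domain with $\mathrm{GKdim}(H) = 2$ and $\mathrm{Ext}^1_H(\mathbb{K},\mathbb{K}) \neq 0$. The point of the Ext hypothesis is that it produces a classical subgroup. Since $\mathrm{Ext}^1_H({}_\varepsilon\mathbb{K},{}_\varepsilon\mathbb{K}) \cong (\mathfrak{m}/\mathfrak{m}^2)^{\ast}$ with $\mathfrak{m} = \ker\varepsilon$, the hypothesis says $\mathfrak{m}^2 \neq \mathfrak{m}$. I would use this to build a nonzero Hopf algebra map $H \to \mathbb{K}[t]$ or $H \to \mathbb{K}[t^{\pm 1}]$, that is, a one-dimensional classical subgroup $\mathbb{G}_a$ or $\mathbb{G}_m$. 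For this, take the commutative Hopf quotient $H/I$, where $I$ is the ideal generated by commutators. Its coordinate group has nontrivial Lie algebra because $\mathfrak{m}/\mathfrak{m}^2 \neq 0$, and it contains a one-dimensional connected closed subgroup.

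This gives a coaction of $\mathbb{K}[t]$ or $\mathbb{K}[t^{\pm1}]$ on $H$. In the $\mathbb{G}_m$ case it is a $\mathbb{Z}$-grading, and in the $\mathbb{G}_a$ case it is a locally nilpotent derivation $\partial$. Consider the coinvariants $H^{\mathrm{co}\,\pi}$ of the induced one-sided coaction. I would show that this is a domain of GK-dimension $1$, and commutative. The tool is the standard fact that a domain of GK-dimension at most $1$ over an algebraically closed field is commutative (Small, Stafford and Warfield, together with Tsen's theorem).

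The next step is to show that $H$ is generated over this coinvariant subalgebra by one extra element. This realises $H$ as an Ore extension $R[x;\sigma,\delta]$ or a skew Laurent extension $R[x^{\pm1};\sigma]$, where $R$ is a commutative Hopf-stable subalgebra of GK-dimension $1$. So $R$ is one of $\mathbb{K}[y]$ and $\mathbb{K}[y^{\pm1}]$, these being the only connected one-dimensional algebraic groups. After that comes a case analysis of the Hopf Ore extensions, that is, those Ore extensions whose coproduct extends that of $R$. Here $\Delta(x) = x\otimes a + b\otimes x + w$ with $a,b$ grouplike in $R$. Solving the compatibility conditions on $\sigma$, $\delta$ and $w$ and normalising by changes of variable produces the listed families:
\begin{itemize}
\item commutative or group-like cases give the group algebras in (a);
\item primitive generators with $R=\mathbb{K}[y]$ give the enveloping algebras in (b);
\item $R=\mathbb{K}[y^{\pm1}]$ with skew-primitive $x$ gives $A(n,q)$ and $C(n)$.
\end{itemize}
The family $B$ arises differently, when the coinvariant subalgebra is a non-Hopf subalgebra of a Laurent ring. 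Then one has to pass to a Hopf subalgebra of a skew Laurent extension generated by suitable monomials $x^{p_i}$-type elements. The arithmetic conditions on $n, p_0,\ldots,p_s$ and on the root of unity $q$ come from requiring closure under $\Delta$ and $S$.

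For (i), noetherian implies affine by this classification, since every algebra in the list is visibly affine. For the converse, I would show that an affine domain satisfying (\ref{xxeqnGK2}) still admits the structure above, so it is one of the listed algebras, each of which is noetherian as an iterated Ore extension or a finite module over one. Alternatively, one can show directly that affine forces the coinvariant ring to be a finitely generated commutative domain.

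I expect the main obstacle to be the middle step: showing that $H$ really is an Ore extension of its GK-dimension-one Hopf subalgebra, and in particular that this subalgebra is itself a Hopf subalgebra. The danger case is the one that produces the $B$ family, where the obvious candidate is not a Hopf subalgebra. For this step I would first try to control the grading or derivation using GK-dimension additivity for Ore extensions, forcing the coinvariants to have GK-dimension exactly $1$. Then I would exploit the valuation given by the $\mathbb{Z}$-degree to locate a generator of minimal positive degree.
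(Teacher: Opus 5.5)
\textbf{There is a genuine gap, at the step you flag as the main obstacle.} The paper gives no argument for this theorem beyond citing \cite[Theorem 0.1]{GZ10}, so your sketch has to carry the proof. The ring you extend over, $D=H^{\mathrm{co}\,\pi}$, is only a one-sided coideal subalgebra, not a Hopf subalgebra. So you cannot identify it with $\mathcal{O}(\mathbb{G}_a)$ or $\mathcal{O}(\mathbb{G}_m)$, nor run a Hopf Ore extension analysis over it.

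This already fails for $A(n,q)$, not only for $B$:
\begin{itemize}
\item For $q\neq 1$ the abelianisation is $\mathbb{K}[x^{\pm1}]$, so the only classical subgroup comes from $\pi(y)=0$.
\item The two one-sided coinvariant rings are then $\mathbb{K}[y]$ and $\mathbb{K}[x^{-n}y]$. Neither is a subcoalgebra, since the coproducts of $y$ and $x^{-n}y$ involve $x^{\pm n}$.
\item The Hopf subalgebra $\mathbb{K}[x^{\pm1}]$ over which $A(n,q)$ is a Hopf Ore extension is a \emph{section} of $\pi$, not its coinvariants. Nothing in your plan produces a group-like lift of $t$.
\end{itemize}

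For $B$, the coinvariant ring is a copy of the singular monomial curve $\mathbb{K}[y^{m_1},\dots,y^{m_s}]$, where $m_i=p_1\cdots p_s/p_i$. Since $\mathrm{gldim}(B)=\infty$, $B$ is not an Ore or skew Laurent extension of $\mathbb{K}[y]$ or $\mathbb{K}[y^{\pm1}]$ at all.

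Your description of $B$ is also off. The extra generators are the powers $y_i=y^{m_i}$ of the non-group-like variable, not monomials in $x$. Each $y_i$ is skew primitive with $x^{nm_i}$ in its coproduct, and $x^{nm_i}y_i=q^{nm_i^2}y_ix^{nm_i}$ with $q^{nm_i^2}$ a primitive $p_i$th root of unity. The $q$-binomial theorem then makes $\Delta(y_i)^{p_i}$ independent of $i$, as it must be, since $y_i^{p_i}=y^m$ for every $i$. This is what the coprimality conditions and the choice of $\ell$ are for.

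Finally, you only assert that $\mathrm{GKdim}(D)=1$ and that $H$ is generated over $D$ by one element. Getting these from GK-additivity for Ore extensions is circular, because the Ore structure is what you are trying to establish.

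\textbf{What is missing} is an argument that treats $D$ as a coideal subalgebra. One way, in the $\mathbb{G}_m$ case:
\begin{itemize}
\item The left and right coactions commute and give a $\mathbb{Z}^2$-grading of $H$, so $D$ is $\mathbb{Z}$-graded by the other-sided degree.
\item $D$ is commutative by your Small--Stafford--Warfield/Tsen argument. $\mathrm{GKdim}(D)\le 1$ holds because $t$ has a homogeneous preimage of degree one, and its powers are independent over $D$.
\item When the two-sided coinvariants are just $\mathbb{K}$, algebraic closure forces the graded pieces of $D$ to be at most one-dimensional. Hence $D\cong\mathbb{K}[S]$ for a submonoid $S\subseteq\mathbb{Z}$.
\item The real work is to decide which $S$ and which commutation scalars are compatible with $\Delta$. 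Singly generated $S$ leads to $A(n,q)$; numerical semigroups with $s\ge 2$ minimal generators lead to $B$.
\item The remaining configurations, namely GK-one two-sided coinvariants and the $\mathbb{G}_a$ case, need separate treatment.
\end{itemize}

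\textbf{Two smaller points.}
\begin{itemize}
\item Your construction of the classical subgroup needs the finiteness hypothesis to make $H/I$ affine, via Molnar in the noetherian case. For example, $\mathbb{K}[\mathbb{Q}\times\mathbb{Q}]$ satisfies (\ref{xxeqnGK2}) but has no Hopf quotient $\mathbb{K}[t]$ or $\mathbb{K}[t^{\pm1}]$.
\item A correct execution of your case analysis will produce $A(0,q)$, with $y$ primitive. It satisfies (\ref{xxeqnGK2}) and is isomorphic to nothing else on the list. So the range $n\ge1$ in the statement as transcribed should read $n\ge 0$.
\end{itemize}
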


The definition of $B(n, p_0, p_1, \ldots , p_s, q)$ is given at \cite[Construction 1.2]{GZ10} - it is a subalgebra $A[x^{\pm 1}; \sigma]$ of $\mathbb{K}[y][x^{\pm 1}; \sigma]$, where $\sigma \in \mathrm{Aut}(\mathbb{K}[y])$ is defined by $\sigma (y) = qy$.  The algebras listed in Theorem \ref{GKtwothm} are pairwise non-isomorphic, except that $A(n,q) \cong A(-n, q^{-1})$. One can check that these Hopf algebras satisfy the following properties:

\begin{corollary}\label{GKtwocor} Retain the hypotheses of Theorem \ref{GKtwothm}, with $H$ noetherian. Then
\begin{enumerate}
\item[(i)] $H$ is pointed;
\item[(ii)] $H$ is Auslander-Gorenstein, $AS$-Gorenstein and $GK$-Cohen Macaulay with $\mathrm{injdim}(H) = 2$;
\item[(iii)] $H$ is $AS$-regular and Auslander-regular if and only if $H$ is a member of class (a), (b), (c)(A) or (c)(C). Then $\mathrm{gldim}(H) = 2$. 
\item[(iv)] $H$ is PI if and only if $H$ is in (a), or is $U(\mathfrak{g})$ for $\mathfrak{g}$ the abelian Lie algebra in (b), or is $A(n,q)$ with $q$ a root of unity, or is from case (c)$B$.
\end{enumerate}
\end{corollary}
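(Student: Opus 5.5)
The plan is to verify each of (i)--(iv) case by case against the list in Theorem \ref{GKtwothm}(ii). In each case I will use an explicit presentation as an iterated Ore or skew Laurent extension of $\mathbb{K}$ or of a commutative Laurent polynomial ring, and then invoke Lemma \ref{xxlem3.2} together with Theorem \ref{xxthm3.3}.

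For (i), pointedness comes from the generators. Each algebra is generated by grouplikes and skew primitives: $x^{\pm1}$ and $y$ in class (a) and in $A(n,q)$; primitives in class (b); $y^{\pm 1}$ and $x$ in $C(n)$; and the analogous generators for $B$ from \cite[Construction 1.2]{GZ10}. I will cite the standard fact that a Hopf algebra generated as an algebra by grouplikes and skew primitives is pointed. The reason is that the coradical filtration is then exhaustive starting from $\mathbb{K}G(H)$, since by \cite[Lemma 5.5.1]{Mo93} the subcoalgebra generated by such elements has coradical inside $\mathbb{K}G(H)$.

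For (ii), I treat most classes directly and $B$ separately.
\begin{itemize}
\item Classes (a) and (b) follow from Theorem \ref{xxthm3.3}(iv)(a) and (ii). Here the Hirsch number is $2$, and $\mathbb{K}(C_\infty\rtimes C_\infty)$ is abelian-by-finite, hence GK-Cohen Macaulay by (iv)(e).
\item For $A(n,q)=\mathbb{K}[y][x^{\pm1};\sigma]$ and $C(n)=\mathbb{K}[y^{\pm1}][x;\delta]$, Lemma \ref{xxlem3.2}(i),(ii) gives Auslander regularity with global dimension $2$. For GK-Cohen Macaulay I will use that both algebras are $\mathbb{N}$-filtered with associated graded ring commutative or a quantum torus extension of a connected graded algebra. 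Concretely, $A(n,q)$ is a localisation of the connected graded quantum plane, to which Lemma \ref{xxlem3.2}(iv) applies; then localisation at the normal Ore set $\{x^i\}$ preserves the properties, as in the proof of Lemma \ref{xxlem3.2}(i). For $C(n)$ I will filter by degree in $x$, so that $\mathrm{gr}$ is the commutative Laurent-polynomial ring $\mathbb{K}[y^{\pm1},x]$. I will adapt Lemma \ref{xxlem3.1}, or alternatively use that GK-Cohen Macaulay passes through Ore extensions by derivations whose associated graded algebra is commutative regular.
\item $AS$-Gorenstein, with $\mathrm{injdim}=2$, then follows from Lemma \ref{xxlem2.4}(v).
\item For $B$ I will use that $B$ is PI, being finite over a central subalgebra since $q$ is a root of unity. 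Theorem \ref{xxthm3.3}(i) then gives all three properties, and $\mathrm{injdim}=\mathrm{GKdim}=2$ via the Cohen Macaulay property.
\end{itemize}

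For (iii), regularity of (a), (b), $A$ and $C$ was obtained above. Equality $\mathrm{gldim}=2$ follows from Lemma \ref{xxlem3.2}(iii), since these are Hopf Ore extensions of the semiprime regular Hopf subalgebras $\mathbb{K}[y^{\pm1}]$ or $\mathbb{K}[x^{\pm1}]$, or directly from Theorem \ref{xxthm3.3}. The main obstacle is showing $\mathrm{gldim}(B)=\infty$. My first attempt will use Theorem \ref{xxthm3.9}(i)(b): since $B$ is a prime PI domain, finite global dimension would force $B$ to be a Cohen Macaulay module over a normal affine centre, with all maximal ideals of height $2$. I expect to contradict this by exhibiting a finite-dimensional simple module, or ${}_\varepsilon\mathbb{K}$ itself, of infinite projective dimension, and then applying Lemma \ref{xxlem2.4}(i). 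The mechanism would be the subalgebra $\mathbb{K}[y^{p_1\cdots p_s}\text{-type generators}]$ of $B$ being singular at the augmentation, mirroring the singular curve ring inside the non-regular GK-one examples of \cite{BZ10}. Concretely, I will compute $\mathrm{Tor}^B_i(\mathbb{K}_\varepsilon,{}_\varepsilon\mathbb{K})$ via the Koszul-type resolution over $A[x^{\pm1};\sigma]$. This reduces to the Tor of the trivial module over the numerical-semigroup ring generated by $y$-monomials, which is nonzero in all degrees because that ring is not smooth when $s\ge2$.

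For (iv), PI holds exactly when the skew-commutation parameters are roots of unity and the Lie algebra is abelian. $\mathbb{K}(C_\infty\rtimes C_\infty)$ is abelian-by-finite, so it is PI. The non-abelian $U(\mathfrak{g})$ is not PI in characteristic $0$. $C(n)$ contains a copy of the first Weyl algebra localisation, since $\delta$ is a nonzero derivation of a characteristic-$0$ domain with no nonzero nilpotent action, so it is not PI. $A(n,q)$ is PI if and only if $q$ is a root of unity, by the standard result on quantum tori.
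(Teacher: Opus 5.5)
Your treatment of (i), (ii), (iv) and of the regular classes in (iii) is in line with the paper, which largely defers to Goodearl--Zhang. The gap is in the step you flag as the main obstacle, $\mathrm{gldim}(B)=\infty$. Specifically, it is the claim that $\mathrm{Tor}^B_i(\mathbb{K}_\varepsilon,{}_\varepsilon\mathbb{K})$ ``reduces to'' $\mathrm{Tor}^A_i(\mathbb{K},\mathbb{K})$, where $A=\mathbb{K}\langle y^{m_1},\ldots,y^{m_s}\rangle$ and $B=A[x^{\pm1};\sigma]$.

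Put $\mathfrak{m}_0=A\cap\ker\varepsilon$. The natural reduction uses two ingredients. The first is the exact sequence of left $B$-modules $0\to B/B\mathfrak{m}_0\to B/B\mathfrak{m}_0\to{}_\varepsilon\mathbb{K}\to 0$, whose first map is right multiplication by $x-1$. The second is flat base change, $\mathrm{Tor}^B_i(\mathbb{K}_\varepsilon,B/B\mathfrak{m}_0)\cong\mathrm{Tor}^A_i(\mathbb{K},\mathbb{K})$.

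In the resulting long exact sequence, $x$ acts on $\mathrm{Tor}^A_i(\mathbb{K},\mathbb{K})$ through $\sigma$, by the scalar $q^{-d}$ on the part of internal $y$-degree $d$. Since $q$ has order $\ell$, the map induced by $x-1$ is invertible on every component with $\ell\nmid d$. Hence, as vector spaces, $\mathrm{Tor}^B_i(\mathbb{K}_\varepsilon,{}_\varepsilon\mathbb{K})\cong T_i\oplus T_{i-1}$, where $T_j$ is the part of $\mathrm{Tor}^A_j(\mathbb{K},\mathbb{K})$ in internal degrees divisible by $\ell$. So singularity of $A$, that is $\mathrm{Tor}^A_i\neq 0$ for all $i$, does not by itself give you anything.

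This already bites in degree $1$. Let $\mathfrak{m}=\ker\varepsilon$. Since $q^{m_i}\neq1$ and
$$(q^{m_i}-1)y^{m_i}=(x-1)y^{m_i}-q^{m_i}y^{m_i}(x-1)\in\mathfrak{m}^2,$$
the space $\mathrm{Tor}^B_1(\mathbb{K},\mathbb{K})\cong\mathfrak{m}/\mathfrak{m}^2$ is spanned by the class of $x-1$. The $s$-dimensional space $\mathrm{Tor}^A_1(\mathbb{K},\mathbb{K})$ contributes nothing.

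Your route could only be rescued by proving $T_i\neq0$ for infinitely many $i$. One way is via the divided powers of the Tate variable attached to the degree-$m$ relation $Y_1^{p_1}=Y_2^{p_2}$. These live in bidegree $(2k,km)$, and $\ell\mid km$ whenever $(n/p_0)\mid k$. For $s\geq3$ this needs Tate--Gulliksen theory, which you do not supply.

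The simple fix is the paper's: do not pass to the trivial module. Since $B$ is free over $A$ on both sides, $\mathrm{gldim}(B)\geq\mathrm{gldim}(A)=\infty$. Concretely, $\mathrm{Tor}^B_i(\mathbb{K}_\varepsilon,B/B\mathfrak{m}_0)\cong\mathrm{Tor}^A_i(\mathbb{K},\mathbb{K})\neq0$ for all $i$, because $A$ has embedding dimension $s\geq2$ at $\mathfrak{m}_0$ and so is singular there. Lemma \ref{xxlem2.4}(i) then gives $\mathrm{prdim}({}_\varepsilon\mathbb{K})=\infty$ for free, and the detour through Theorem \ref{xxthm3.9} is unnecessary.

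Two smaller points in (ii):
\begin{enumerate}
\item[(a)] Lemma \ref{xxlem3.1} requires $\dim_{\mathbb{K}}\Gamma_0<\infty$. This fails for your $x$-degree filtration on $C(n)$, where $\Gamma_0=\mathbb{K}[y^{\pm1}]$. Giving $y^{\pm1}$ degree $1$ and $x$ degree $n$ repairs this: then $\Gamma_0=\mathbb{K}$ and $\mathrm{gr}\,C(n)\cong\mathbb{K}[a,b,c]/(ab)$, a commutative equidimensional graded hypersurface.
\item[(b)] For $A(n,q)$ no localisation step is needed, since Lemma \ref{xxlem3.2}(iv) applies directly to $\mathbb{K}[y][x^{\pm1};\sigma]$. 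Transferring the $GK$-Cohen Macaulay property through a localisation would need its own justification.
\end{enumerate}
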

\begin{proof} (i) The algebras listed are all generated by their group-like and skew-primitive elements, so they are pointed by \cite[Lemma 5.5.1]{Mo93}.
\medskip

\noindent(ii) It is shown in \cite[Proposition 0.2]{GZ10} that all the listed algebras are Auslander-Gorenstein  and $GK$-Cohen Macaulay; for (a) and (c)(B) this is thanks to the PI result, Theorem \ref{xxthm3.3}(i); for (b) it is of course a special case of Theorem \ref{xxthm3.3}(ii); for cases (A) and (C) of (c) this follows by standard arguments involving skew polynomial algebras, see \cite[proof of Prop. 0.2(b), page 3166]{GZ10}. In all cases the $AS$-Gorenstein property then follows by Lemma \ref{xxlem2.4}(v).

\medskip

\noindent (iii) That the algebras in classes (a) and (b) have global dimension 2 is of course well known (and these are special cases of Theorem \ref{xxthm3.3}). The global dimension of the three classes in (c) is discussed in \cite[Proofs of Prop. 1.6 and Prop. 0.2]{GZ10}. (In fact it is easy to see that the algebras $B$ have infinite global dimension, since they are skew Laurent polynomial algebras with coefficient algebra the commutative singular affine domain $A := \mathbb{K}\langle y^{m_1}, \ldots , y^{m_s}\rangle$, where $m_i := m/p_i$, where $m = \Pi_{i=1}^s p_i$.)

\medskip
\noindent (iv) The group algebras in (a) are easily seen to be finite modules over their centres. For (b) simply note that the enveloping algebra $U$  of the 2-dimensional non-abelian Lie algebra has $Z(U) = \mathbb{K}$. That the algebras from case (c) satisfying the conditions stated in (iv) are PI (and in fact finite over their centres) is confirmed in the course of the calculations in \cite[$\S$1]{GZ10}. 
\end{proof}

\medskip

\begin{remark}\label{gldimrem} It had been hoped - see \cite[Question K]{Br07} - that every affine noetherian Hopf $\mathbb{K}$-algebra domain might have finite global dimension. But the algebras $B(n, p_0, p_1, \ldots , p_s, q)$ put paid to that, since they are affine noetherian Hopf domains which nevertheless have infinite global dimension. It remains an interesting question whether there are reasonable structural conditions guaranteeing the finite global dimension of a noetherian Hopf $\mathbb{K}$-algebra. For a significant necessary condition, see Theorem \ref{gldimthm}.
\end{remark}

Naturally, Goodearl and Zhang asked \cite[Question 0.3]{GZ10} whether every affine noetherian Hopf $\mathbb{K}$-algebra of $GK$-dimension 2 satisfies $(\ref{xxeqnGK2})$. It did not take long to discover that the answer is ``no!'':

\begin{theorem}\label{GK2more} ({\rm Wang, Zhang, Zhuang}, \cite[Theorem 0.1, Corollary 0.2]{WZZ13})
\begin{enumerate}
\item[(i)] There exists an infinite family of affine noetherain Hopf $\mathbb{K}$-algebra domains, labelled $B(n, p_1, \ldots , p_s, q, \alpha_1, \ldots , \alpha_s)$ with $\alpha_1, \ldots , \alpha_s$ distinct integers, which are pairwise distinct affine noetherian Hopf domains of $GK$-dimension 2, variants of the algebras (c)$(B)$ listed in Theorem \ref{GKtwothm}, but not isomorphic to those listed there.
\item[(ii)] $B := B(n, p_1, \ldots , p_s, q, \alpha_1, \ldots , \alpha_s)$ is generated by group-likes and skew primitives, hence is pointed.
\item[(iii)] For all the algebras $B$, $\mathrm{Ext}_B^1(\mathbb{K}, \mathbb{K})  = 0$.
\item[(iv)] Suppose that $H$ is an affine noetherian Hopf $\mathbb{K}$-algebra domain with $\mathrm{GKdim}(H) = 2$ which is generated by its group-likes and skew primitive elements, and which contains no Hopf subalgebra isomorphic to $A(1,q)$ for $q$ a primitive fifth or seventh root of unity in $\mathbb{K}$. Then $H$ is isomorphic to one of the algebras of Theorem \ref{GKtwothm} or Theorem \ref{GK2more}(i).
\end{enumerate}
\end{theorem}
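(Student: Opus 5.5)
This is a survey statement citing \cite{WZZ13}, so the plan is to reconstruct the argument of that paper in outline rather than to find a new proof.

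\textbf{Construction, parts (i) and (ii).} For (i) I would mimic \cite[Construction 1.2]{GZ10}. Start from $\mathbb{K}[y][x^{\pm 1};\sigma]$ with $\sigma(y)=qy$, and consider the elements $y_i := y^{m_i}$, where $m_i = m/p_i$ and $m=\prod p_i$. Instead of taking $x$ group-like and each $y_i$ skew primitive with respect to the ``standard'' twist, I would deform the coproduct by setting
\[
\Delta(y_i) \;=\; y_i\otimes x^{\alpha_i m_i} + 1\otimes y_i \qquad (i=1,\ldots,s).
\]
The integers $\alpha_i$ are chosen so that this remains an algebra map; that is, the $q$-commutation relations among the $y_i$ and $x$ must be preserved. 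One then checks the following in turn:
\begin{itemize}
\item the subalgebra $B$ generated by $x^{\pm1}$ and the $y_i$ is a Hopf subalgebra, the antipode being forced by the skew-primitive formulas;
\item $B$ is a skew Laurent extension $A[x^{\pm 1};\sigma]$ over the commutative affine domain $A=\mathbb{K}[y_1,\ldots,y_s]$, so it is an affine noetherian domain with $\mathrm{GKdim}(B)=2$.
\end{itemize}
Part (ii) then follows immediately from the construction, with pointedness given by \cite[Lemma 5.5.1]{Mo93} exactly as in Corollary \ref{GKtwocor}(i).

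\textbf{Parts (iii) and the non-isomorphism claims in (i).} I would compute $\mathrm{Ext}^1_B(\mathbb{K},\mathbb{K})$ as $(\mathfrak{m}/\mathfrak{m}^2)^*$ with $\mathfrak{m}=\ker\varepsilon$. Here $\mathfrak{m}$ is generated by $x-1$ and the $y_i$. The argument has two steps:
\begin{itemize}
\item the relations $xy_i = q^{m_i}y_ix$, with $q^{m_i}\neq 1$, force $y_i\in\mathfrak{m}^2$;
\item the distinctness of the $\alpha_i$ (the new feature compared with (c)(B)) forces $x-1\in\mathfrak{m}^2$ as well, via the relations among the $y_i$ and group-likes.
\end{itemize}
Hence $\mathrm{Ext}^1=0$. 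By contrast, the algebras in Theorem \ref{GKtwothm} satisfy (\ref{xxeqnGK2}), so they have $\mathrm{Ext}^1\neq0$, and therefore $B$ is not isomorphic to any of them. To show that the new algebras are pairwise distinct, I would compare invariants: the group of group-likes $\langle x\rangle$, the spaces of $(1,g)$-skew primitives for each group-like $g$, and the resulting multiset of exponents $\alpha_i m_i$. These are isomorphism invariants of pointed Hopf algebras and recover the parameters.

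\textbf{Part (iv), the main obstacle.} Let $H$ be as in (iv). Set $G=G(H)$ and let $H'$ be the Hopf subalgebra generated by $G$ and by the skew primitives. Since $H$ is a domain of GK-dimension 2, $G$ is torsion-free of rank at most 2. I would split into cases:
\begin{itemize}
\item If $\mathrm{Ext}^1_H(\mathbb{K},\mathbb{K})\neq0$, then Theorem \ref{GKtwothm} applies directly.
\item Otherwise, I would show that $G\cong\mathbb{Z}=\langle x\rangle$ and that every nontrivial skew primitive $y$ is $(1,x^a)$-primitive with $xy=\xi yx$ for a root of unity $\xi$.
\end{itemize}
The hard step is then to classify the subalgebras generated by such $y$'s. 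Pairs of skew primitives generate quantum-plane-like or Nichols-algebra-like Hopf subalgebras, and the GK-dimension bound forces their finite-dimensional braided parts to be trivial. The excluded $A(1,q)$, with $q$ a primitive fifth or seventh root of unity, corresponds precisely to rank-two Nichols-type configurations (the rank-two root systems where the braiding might give extra relations) that could not be ruled out. My first attempt would be to use the GK-dimension count and the domain property to show that every skew primitive is a scalar multiple of $y^{k}$ times a group-like, and then to read off the $p_i$ and $\alpha_i$.
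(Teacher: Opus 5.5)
Since the survey gives no argument beyond citing \cite{WZZ13}, the comparison has to be with that paper. Against it, your reconstruction of (i)--(iii) fails at its central step. I read \cite{WZZ13} through the survey's description of these algebras as variants of the (c)$(B)$ family, so what follows is a reconstruction rather than a quotation.

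First, take your setting: $B=A[x^{\pm1};\sigma]$ with $A=\mathbb{K}[y_1,\ldots,y_s]\subseteq\mathbb{K}[y]$ and $y_i=y^{m_i}$. Your coproduct is not an algebra map when the $\alpha_i$ are distinct. In $A$ we have $y_i^{p_i}=y^m=y_j^{p_j}$. The component of $\Delta(y_i)^{p_i}$ of $y$-degree $m$ in the first tensor factor is $y^m\otimes x^{\alpha_i m}$. Comparing with $\Delta(y_j)^{p_j}$ forces $\alpha_i=\alpha_j$.

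Second, and more fundamentally, no coproduct making the $y_i$ skew primitive on an algebra of this shape can satisfy (iii). The map $y\mapsto 0$, $x\mapsto x$ restricts to an algebra surjection $B\to\mathbb{K}[x^{\pm1}]$ that kills every $y_i$. Because the $y_i$ are skew primitive, this is a Hopf map. Hence $x-1\notin\mathfrak{m}^2$ and $\mathrm{Ext}^1_B(\mathbb{K},\mathbb{K})\neq 0$. So your second bullet (distinctness of the $\alpha_i$ forces $x-1\in\mathfrak{m}^2$) cannot be carried out: the relations of $A$ are binomial and never involve the group-likes.

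What is missing is that the \emph{algebra relations}, not the coproduct, must be deformed. Your first bullet correctly puts each $y_i$ in $\mathfrak{m}^2$. Let $I$ be the Hopf ideal generated by the $y_i$. Then $\mathfrak{m}/\mathfrak{m}^2$ coincides with the corresponding space for $B/I$, which is the group algebra of a cyclic quotient of $\langle x\rangle$. In characteristic $0$ this gives $\mathrm{Ext}^1_B(\mathbb{K},\mathbb{K})=0$ if and only if $x^N-1\in I$ for some $N\geq 1$. In other words, there must be an inhomogeneous relation expressing a trivial skew primitive $1-x^N$ through the $y_i$.

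That is the role of the parameters $\alpha_i$. Schematically, the binomial relations $y_i^{p_i}=y_j^{p_j}$ of the algebras (c)$(B)$ are replaced by $y_i^{p_i}-y_j^{p_j}=(\alpha_i-\alpha_j)(1-x^N)$. Here $N$ is chosen so that both sides are $(1,x^N)$-skew primitive, which makes the relations generate a Hopf ideal. Then $x^N-1\in\mathfrak{m}^2$, so $N(x-1)\in\mathfrak{m}^2$ and $\mathfrak{m}=\mathfrak{m}^2$. The noetherian, domain and GK-dimension claims must then be proved for this deformed algebra. They cannot be read off from a subalgebra of $\mathbb{K}[y][x^{\pm1};\sigma]$.

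The same obstruction affects your plan for (iv). Suppose all nontrivial skew primitives of $H$ were of the form $\lambda y^k g$ inside some $\mathbb{K}[y][x^{\pm1};\sigma]$. Then $y\mapsto 0$ would again give a Hopf surjection onto $\mathbb{K}[x^{\pm1}]$. So your target is unattainable exactly in the case $\mathrm{Ext}^1_H(\mathbb{K},\mathbb{K})=0$ that (iv) has to handle. As it stands (iv) is unproved, and your explanation of the excluded fifth and seventh roots of unity is conjecture.
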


Wang, Zhang and Zhuang showed that their algebras have the following properties:

\begin{theorem}(\cite[Theorem 0.3]{WZZ13})\label{GK2moreprops} Let $B$ be an algebra as defined in Theorem \ref{GK2more}.
\begin{enumerate}
\item[(i)] $B$ satisfies a polynomial identity, in fact is a finite module over its affine centre. 
\item[(ii)] $B$ is Auslander-Gorenstein, $AS$-Gorenstein and $GK$-Cohen Macaulay, with $\mathrm{injdim}(B) = 2$.
\item[(iii)] $\mathrm{gldim}(B) < \infty \Longleftrightarrow \mathrm{gldim}(B) = 2 \Longleftrightarrow s = 2 \textit{ and } \alpha_1 \neq \alpha_2.$
\end{enumerate}
\end{theorem}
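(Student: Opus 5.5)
We first establish (i), and then obtain (ii) from it by quoting Theorem \ref{xxthm3.3}(i). So the plan is to show that $B$ is a finite module over an affine central subalgebra. Recall how $B = B(n,p_1,\ldots,p_s,q,\alpha_1,\ldots,\alpha_s)$ is built, as for the algebras of Theorem \ref{GKtwothm}(c)(B). It is a skew Laurent extension $A[x^{\pm 1};\sigma]$ of the commutative affine domain $A = \mathbb{K}\langle y_1,\ldots,y_s\rangle \subseteq \mathbb{K}[y]$, with $y_i = y^{m_i}$. The automorphism $\sigma$ rescales $y$ by a root of unity, and the twisting integers $\alpha_i$ enter only through the coproducts of the $y_i$.

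\textbf{Step 1: (i).} Since $q$ is a root of unity of order $\ell$, we have $\sigma^{\ell} = \mathrm{id}$. Hence $x^{\pm \ell}$ is central. Moreover $y^{\ell}$, and therefore the subalgebra $A^{\sigma}$ of $\sigma$-invariants, commutes with $x$. Thus $Z_0 := A^{\sigma}[x^{\pm \ell}]$ is central. By Noether's finiteness theorem applied to the finite group $\langle \sigma \rangle$, the ring $A^{\sigma}$ is affine and $A$ is a finite $A^{\sigma}$-module. It follows that $B$ is generated as a $Z_0$-module by the finitely many elements $a_j x^k$, where the $a_j$ generate $A$ over $A^\sigma$ and $0 \le k < \ell$. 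So $B$ is finite over the affine central subalgebra $Z_0$, and hence is PI. The full centre $Z$ is then affine by the Artin--Tate Lemma, giving the stated form of (i).

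\textbf{Step 2: (ii).} $B$ is an affine noetherian Hopf algebra satisfying a PI, since a finite module over a noetherian commutative ring is noetherian. Theorem \ref{xxthm3.3}(i) therefore gives that $B$ is Auslander-Gorenstein, $AS$-Gorenstein and $GK$-Cohen Macaulay. Applying the $GK$-Cohen Macaulay property to ${}_{\varepsilon}\mathbb{K}$ gives $j({}_{\varepsilon}\mathbb{K}) = \mathrm{GKdim}(B) = 2$. Combined with (AS2), this yields $\mathrm{injdim}(B) = 2$. Alternatively, Lemma \ref{xxlem3.2}(i) with the commutative Gorenstein ring $A$ of Krull dimension $1$ gives $\mathrm{injdim}(B) \le 2$.

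\textbf{Step 3: (iii).} By Lemma \ref{xxlem3.2}(ii),(iii), applied with $A$ a commutative domain (hence semiprime), $B$ has finite global dimension exactly when $A$ does, and then $\mathrm{gldim}(B) = \mathrm{gldim}(A) + 1 = 2$. This gives the first equivalence. For the converse direction of Lemma \ref{xxlem3.2}: if $\mathrm{gldim}(B) < \infty$, then $B$ is free over $A$, so $\mathrm{gldim}(A) < \infty$ by restricting resolutions. Here one needs to note that $A$ is a Hopf subalgebra, or else argue via Theorem \ref{xxthm3.9}(i)(e) that $B$ is projective over a regular central subalgebra.

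The second equivalence is the main obstacle. Since $A$ is a one-dimensional affine domain, finite global dimension means that the curve $\mathrm{Spec}(A)$ is nonsingular. The affine monomial curve $\mathbb{K}[y^{m_1},\ldots,y^{m_s}]$ is singular at the origin when $s \ge 2$, so the regular case must come from the precise definition of $A$ in the $\alpha$-twisted construction. There the generators are presumably of the form $y^{m_i}$ multiplied by powers of $x$ governed by the $\alpha_i$, or the coefficient ring is otherwise altered.

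The plan for this step is as follows. I would write down the explicit generators and relations of the coefficient ring from \cite{WZZ13}, and compute the local ring at the unique candidate singular point, which is the image of the augmentation ideal. Counting the generators of $\mathfrak{m}/\mathfrak{m}^2$ there should show that it is $1$-dimensional exactly when $s=2$ and $\alpha_1 \neq \alpha_2$. Alternatively, and perhaps more cleanly, I would use Lemma \ref{xxlem2.4}(i), by which $\mathrm{gldim}(B) = \mathrm{prdim}({}_{\varepsilon}\mathbb{K})$. I would compute $\mathrm{Tor}^B_i(\mathbb{K}_{\varepsilon},{}_{\varepsilon}\mathbb{K})$ via the Ore-extension spectral sequence, reducing the question to the finiteness of $\mathrm{Tor}$ over the coefficient ring at the augmentation point. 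Verifying the exact combinatorial condition is the delicate part.
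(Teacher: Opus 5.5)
There is a genuine gap, and it starts with your description of $B$. The model you use, $B=A[x^{\pm1};\sigma]$ with $A=\mathbb{K}[y^{m_1},\ldots,y^{m_s}]\subseteq\mathbb{K}[y]$ and the $\alpha_i$ entering only through $\Delta$, is ruled out by the statement itself, for two reasons.

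First, global dimension depends only on the algebra structure. If the $\alpha_i$ affected only the coproduct, (iii) could not distinguish $\alpha_1\neq\alpha_2$ from $\alpha_1=\alpha_2$.

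Second, for any skew Laurent extension of this form, $B$ is $A$-free and every $A$-module $M$ is an $A$-direct summand of $B\otimes_A M$. Hence $\mathrm{gldim}(A)\le\mathrm{gldim}(B)$. The monomial curve is singular at the origin once $s\ge2$, so every algebra of your form has infinite global dimension. You noticed this tension but did not resolve it.

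In \cite{WZZ13} the $\alpha_i$ enter the defining relations. Roughly, the $p_i$-th powers of the skew primitive generators $y_i$ no longer coincide; instead they differ by $\alpha$-dependent scalar multiples of $1-x^{N}$, for a suitable central grouplike $x^{N}$. Terms of this kind are what kill all but finitely many of the characters $x\mapsto\lambda$, $y_i\mapsto 0$, and so give $\mathrm{Ext}^1_B(\mathbb{K},\mathbb{K})=0$ as in Theorem \ref{GK2more}(iii). Consequently Step 1 and the first half of Step 3 are about the wrong algebra. The idea of Step 1 does transfer: powers of $x$ and of the $y_i$ are central, and $B$ is spanned by monomials over them. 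But it has to be redone with the real relations.

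The heart of (iii), the criterion ``$s=2$ and $\alpha_1\neq\alpha_2$'', is therefore not proved. You describe a computation on a one-dimensional ring you have not identified, whereas the relevant commutative object is two-dimensional. With the correct relations, one route is as follows.
\begin{enumerate}
\item[(1)] The commutative subalgebra $C$ generated by the $y_i$ and $x^{\pm N}$ is an affine domain of $GK$-dimension $2$.
\item[(2)] $B=\bigoplus_{k=0}^{N-1}Cx^k$ is a crossed product of $C$ by $\mathbb{Z}/N\mathbb{Z}$. Since $\mathrm{char}\,\mathbb{K}=0$, $\mathrm{prdim}_B(M)=\mathrm{prdim}_C(M)$ for every $B$-module $M$.
\item[(3)] By Lemma \ref{xxlem2.4}(i), $\mathrm{gldim}(B)<\infty$ if and only if $C$ is regular at the augmentation point $y_i=0$, $x^N=1$. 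So your instinct to look only at that point is right.
\item[(4)] Since every $p_i\ge2$, the only linear terms of the relations at that point are the $\alpha$-multiples of $1-x^N$. Granting that these relations present $C$, the Zariski tangent space there has dimension $s+1$ if all $\alpha_i$ are equal, and $s$ otherwise. This equals $2$ exactly when $s=2$ and $\alpha_1\neq\alpha_2$.
\end{enumerate}

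The rest of your proposal is sound. Given (i) and $\mathrm{GKdim}(B)=2$, Step 2 is the same $j({}_{\varepsilon}\mathbb{K})$ argument the paper uses for Theorem \ref{xxthm3.3}(iv)(e); for the statement at hand the paper itself gives no argument beyond citing \cite[Theorem 0.3]{WZZ13}. The first equivalence in (iii) needs no appeal to Lemma \ref{xxlem3.2}(iii), which is in any case inapplicable because $A$ is not a Hopf subalgebra ($\Delta(y_i)$ involves $x$). It follows directly from (ii) and Definition \ref{xxdefn2.1}(ii): a finite global dimension equals $\mathrm{injdim}(B)=2$.
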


While the classification results for Hopf $\mathbb{K}$-algebras of $GK$-dimension at most 2 are deep and impressive, many open questions still remain, even for these low dimensions. We list some of these questions here, and add afterwards some brief remarks.

\begin{questions}\label{GKlowqns} (i) Are there prime noetherian Hopf $\mathbb{K}$-algebras of $GK$-dimesnsion 1 in addition to those described above?
\medskip

\noindent(ii) Is every affine noetherian Hopf $\mathbb{K}$-algebra domain of $GK$-dimension 2 generated by its group-likes and skew-primitives?
\medskip

\noindent(iii) Let $H$ be an affine noetherian Hopf $\mathbb{K}$-algebra domain of $GK$-dimension 2 which is generated by group-likes and skew-primitives. Is $H$ listed in Theorems \ref{GKtwothm} or \ref{GK2more}?
\medskip 

\noindent(iv) What are the prime affine noetherian Hopf $\mathbb{K}$-algebras of $GK$-dimension two? What are the prime regular ones?
\medskip

\noindent (v) Can we describe the semiprime affine noetherian Hopf $\mathbb{K}$-algebras of $GK$-dimension at most 2? Even the regular ones? Even those of $GK$-dimension one?
\medskip

\noindent (vi) Are there no Hopf $\mathbb{K}$-algebras $H$ with $\mathrm{GKdim}(H) = \alpha$, with $2 < \alpha < 3$?
\medskip

\noindent (vii) What can be said regarding the above questions for base fields which are not algebraically closed of characteristic 0?
\end{questions} 

\medskip

\begin{remark}\label{GKlowrmk} 

Regarding Questions \ref{GKlowqns}(v) and (vi), note that Bergman's Gap Theorem \cite[Theorem 2.5]{KL00} guarantees that there is no $\mathbb{K}$-algebra $R$ with $1 < \mathrm{GKdim}(R) < 2$. But by \cite[Theorem 2.9]{KL00} there exists a $\mathbb{K}$-algebra $S$ with $\mathrm{GKdim}(S) = \beta$ for every real number $\beta > 2$.


\end{remark}

\bigskip
\section{Properties of noetherian $AS$-Gorenstein Hopf algebras}\label{xxsec4} In this section we review a number of consequences flowing from the possession by a noetherian Hopf algebra of the homological properties discussed above.

\subsection{The antipode (I)}\label{xxsubsec4.1}
Takeuchi \cite{T71} showed that in general the antipode $S$ of  a Hopf algebra $H$ is not bijective. Nevertheless it has been conjectured, for example by Skryabin \cite[p.623]{Sk06}, that the antipode of a right noetherian Hopf algebra is bijective; he proved in \cite[Corollaries 1,2]{Sk06} that $S$ is bijective when $H$ is semiprime right noetherian or $H$ is affine noetherian PI. In fact the $AS$-Gorenstein condition is also sufficient to ensure that $S$ is bijective:

\begin{theorem}\label{xxthmbijantipode}({\rm Lu, Oh, Wang, Yu}, \cite[Corollary 0.3]{LOWY18}) If $H$ is a noetherian $AS$-Gorenstein Hopf algebra then its antipode $S$ is bijective.
\end{theorem}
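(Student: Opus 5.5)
The plan is to show that $S$ is injective and surjective by exploiting the rigidity of the one-dimensional integral bimodules $\int^{\ell}_H$ and $\int^r_H$ of Lemma \ref{xxlem2.4}(iii), together with the noetherian hypothesis. Injectivity comes almost for free: the images $S^n(H)$ form a chain of Hopf subalgebras, and the kernels $\ker S^n$ form an ascending chain of ideals, since $S$ is an algebra anti-endomorphism and $\ker S^n$ is a two-sided ideal. So $\ker S^n = \ker S^{n+1}$ for some $n$ because $H$ is noetherian. If we can show $S$ surjective, then $S^n$ is surjective, and $S(\ker S) \subseteq \ker S^{n}$ combined with stabilisation forces $\ker S = 0$. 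Alternatively, Skryabin's argument that an injective-dimension or weak-finiteness condition gives injectivity can be adapted. So the real content is surjectivity.

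For surjectivity I would first set up a homological description of $S$. The key tool is the standard isomorphism, valid for Hopf algebras,
$$\mathrm{Ext}^i_H(M,{}_H H) \;\cong\; \mathrm{Ext}^i_H({}_{\e}\mathbb{K}, \mathrm{Hom}_{\mathbb{K}}(M,H))$$
for finite dimensional $M$, with the adjoint-type module structures defined using $\Delta$ and $S$. This is the mechanism behind \cite[Lemma 3.2]{BZ08} and Lemma \ref{xxlem2.4}(ii),(iii). Applying (AS2) to ${}_{\e}\mathbb{K}$, we have $\mathrm{Ext}^i_H({}_{\e}\mathbb{K},H)=0$ for $i\neq d$, and $\int^{\ell}_H = \mathbb{K}\xi$ is one-dimensional, with right $H$-action given by a character $\xi$. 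I would then compute $\mathrm{Ext}^d_H({}_{\e}\mathbb{K}, H\otimes H)$ in two ways, using the left $H$-module structure of $H\otimes H$ via $\Delta$. It is isomorphic to $\int^{\ell}_H\otimes H$ by the Hopf-module trick (the map $a\otimes b\mapsto \sum a_1\otimes a_2 b$ converts the diagonal structure into a free one). Tracking the remaining right structure then produces a ``Nakayama-type'' identification in which $H$ appears twisted by a map built from $S$ and the winding automorphism $\tau_\xi$. The aim is to show that $\mathrm{Ext}^d_H({}_{\e}\mathbb{K},H\otimes H)\cong H$ as an $H$-bimodule twisted by $S^2\tau_\xi$. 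Because this Ext group is also computed as a right $H$-module finitely generated of the same rank as $H$, this forces $S^2$ to have image whose module-theoretic ``size'' equals that of $H$.

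To conclude, I would show that $S^2(H)$ is a subalgebra over which $H$ becomes finitely generated with trivial cokernel on a local-duality level. Concretely, the step is to show that the natural map $H\to H$ induced on $\mathrm{Ext}^d$ by $S^2$ is an isomorphism of right $H$-modules, by comparing with the left-sided version using (AS3) and $\int^r_H$. Since $H$ is noetherian, a surjective endomorphism of $H_H$ is injective, so it is enough to get surjectivity up to this twist. That gives that $S^2$ is bijective, hence $S$ is bijective.

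The main obstacle is making this bimodule computation rigorous without already knowing that $S$ is bijective. Most textbook identifications of twisted bimodules $H^{S^2\tau}$ presuppose that $S^{-1}$ exists, for instance in order to move actions from one side to the other. I would first try to use only $S$, never $S^{-1}$, by working with the left-module version of the Hopf-module isomorphism and using $S$ solely through the right action on $\mathrm{Hom}_{\mathbb{K}}(M,H)$. If that fails, I would fall back on a rigid dualising complex argument, as in \cite{LWZ09}, but that requires extra hypotheses.
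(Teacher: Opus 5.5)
The paper gives no argument beyond the citation of \cite[Corollary 0.3]{LOWY18}, so your sketch has to stand on its own, and at the decisive point it does not. Your reduction of injectivity to surjectivity is correct: the ideals $\ker S^n$ stabilise at some $n$, and if $S$ is onto then any $x\in\ker S$ equals $S^n(y)$ with $y\in\ker S^{n+1}=\ker S^n$, so $x=0$. But surjectivity, which is the entire content, is never proved, and the object you choose cannot give it.

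Take $H\otimes H$ with the diagonal action. Untwisting onto the first factor via $a\otimes b\mapsto\sum a_1\otimes a_2b$ gives $\int^\ell_H\otimes H$. Under this, right multiplication by $x$ on the first factor becomes left multiplication by $S(\tau^\ell_\xi(x))$. So the twist is by the anti-endomorphism $S\circ\tau^\ell_\xi$, not by $S^2\circ\tau^\ell_\xi$. Untwisting onto the second factor, $a\otimes b\mapsto\sum b_1a\otimes b_2$, is bijective if and only if $S$ is; when it exists, its inverse is $c\otimes d\mapsto\sum S^{-1}(d_1)c\otimes d_2$. So you get only one description. Moreover, since this is an $\mathrm{Ext}$ out of ${}_{\varepsilon}\mathbb{K}$, it only ever involves $\int^\ell_H$, leaving nothing for (AS3) and $\int^r_H$ to be compared with.

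The later steps do not repair this. Knowing that $H$ twisted by an endomorphism $\sigma$ has ``the same rank'' as $H$ does not yield $\sigma(H)=H$ without a mechanism. There is no ``natural map on $\mathrm{Ext}^d$ induced by $S^2$'', since $S^2$ is not a module map. And injectivity of surjective endomorphisms of $H_H$ is about module maps, not about surjectivity of $S^2$; your sentence ``it is enough to get surjectivity up to this twist'' merely restates the problem.

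The missing idea is a single bimodule that can be computed from \emph{both} trivial modules using $S$ alone, plus a unit argument. Take $U=\mathrm{Ext}^d_{H^e}(H,H^e)$ with $H^e=H\otimes H^{op}$. Then $H\cong H^e\otimes_H{}_{\varepsilon}\mathbb{K}$ via the algebra map $h\mapsto\sum h_1\otimes S(h_2)$. Also $H\cong H^e\otimes_{H^{op}}\mathbb{K}_{\varepsilon}$ via the algebra map $H^{op}\to H^e$, $h\mapsto\sum S(h_1)\otimes h_2$. Every untwisting needed has an inverse built from $S^2$ rather than $S^{-1}$; this covers freeness for Eckmann--Shapiro and extracting the free factor from $\mathrm{Ext}$. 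For instance, $a\otimes b\mapsto\sum a_1\otimes bS(a_2)$ has inverse $a\otimes b\mapsto\sum a_1\otimes bS^2(a_2)$, because $\sum S^2(h_2)S(h_1)=\varepsilon(h)=\sum S(h_2)S^2(h_1)$.

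Now use finitely generated free resolutions of the trivial modules and $\dim\int^\ell_H=\dim\int^r_H=1$ (Lemma \ref{xxlem2.4}(iii)). The first computation shows that $U$, under the action of $H\otimes 1$, is $H$ with $b\cdot x=b\,\nu(x)$, where $\nu=S^2\circ\tau^\ell_\xi$. The second computation shows that it is the regular module $H_H$. A right module isomorphism $f:H_H\to H^{\nu}$ has the form $f(x)=u\,\nu(x)$ with $u=f(1)$. Surjectivity gives $u\,\nu(x_0)=1$, so $u$ is a unit because noetherian rings are directly finite. Then $\nu(x)=u^{-1}f(x)$ shows $\nu$ is bijective. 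Since $\tau^\ell_\xi$ is an automorphism with inverse $\tau^\ell_{\xi\circ S}$, $S^2$ and hence $S$ are bijective, and your kernel argument is not even needed.
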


The case of Theorem \ref{xxthmbijantipode} where $H$ is $AS$-regular was obtained independently by Le Meur \cite[Proposition 3.4.2]{LM19}. Since the bijectivity of the antipode is not assumed in the proof \cite{WZ02} of Theorem \ref{xxthm3.3}(i), the latter result combined with Theorem \ref{xxthmbijantipode} gives another proof of Skryabin's result that the antipode of an affine noetherian PI Hopf algebra is bijective.

\subsection{The dualising complex}\label{xxsubsec4.2}
We recall some standard terminology; for details see \cite{BZ08, LWZ09, LOWY18, LM19}. Given a $\mathbb{K}$-algebra $A$, its enveloping algebra $A \otimes A^{op}$ is denoted by $A^e$. Recall that the noetherian Hopf algebra $H$ is said to satisfy the \emph{Van den Bergh condition} if $\mathrm{injdim}(H) = d < \infty$ and $\mathrm{Ext}^i_{H^e}(H, H^e) = \delta_{id}U$ for an invertible $H$-bimodule $U$. In this case $U$ is called the \emph{Van den Bergh dualising module} for $H$. Then $H$ is said to satisfy \emph{Van den Bergh duality} if it satisfies the Van den Bergh condition and the $H^e$-module $H$ has a finite resolution by finitely generated projective $H^e$-modules. In this case $H$ satisfies a form of (twisted) Poincar$\acute{e}$ duality by \cite[Theorem 1 and Erratum]{VdB98}.

Finally, $H$ is \emph{twisted Calabi–Yau} if it satisfies Van den Bergh duality with the dualising module being ${}^{\nu}H^1$ for some algebra automorphism $\nu$ of $H$, and is \emph{Calabi–Yau} if $\nu$ is an inner automorphism. In this case $\nu$ is called the \emph{Nakayama automorphism} of $H$ (and is determined only up to an inner automorphism). 

For the description of $\nu$ in the following theorem, recall that, given a character $\chi:H \longrightarrow \mathbb{K}$ of $H$,  the \emph{left winding automorphism} $\tau^{\ell}_{\chi}$ of $H$ is defined by $\tau^{\ell}_{\chi}(h) = \sum \chi(h_1)h_2$ for $h \in H$. The notation used below in defining the complex $R$ indicates that it is concentrated in degree $d$.  The following result originates with \cite[Proposition 4.5, Corollary 5.2]{BZ08}, but incorporates improvements obtained in \cite[Lemma 1.3]{RRZ14} for (b) and in \cite[Theorems 3.3,3.4]{LOWY18} for (a), which permit the dropping of the bijectivity hypothesis on $S$ (thanks to Theorem \ref{xxthmbijantipode} above), and which respectively show also the necessity of the $AS$-regular and $AS$-Gorenstein hypotheses. The fact that hypothesis (b)(iii) implies (c) is a consequence of the main result of \cite{VdB98}. Recall here the notation $\int^{\ell}_H$ from (\ref{integraleqn}), and let $HH^{\ast}(H, -)$ and $HH_{\ast}(H, -)$ denote Hichschild (co)homology.

\begin{theorem}\label{dualisingthm}(\cite{BZ08, LM19, LOWY18}) Let $H$  be a noetherian Hopf $\mathbb{K}$--algebra with antipode $S$. 
\begin{itemize}
\item[(a)] The following are equivalent:
\begin{enumerate}
\item[(i)] $H$ is $AS$-Gorenstein with $\mathrm{injdim}(H) = d$;
\item[(ii)] $H$ satisfies the Van den Bergh condition;
\item[(iii)] $H$ has a rigid dualising complex $R = U[s]$, where $U$ is an invertible $H$-bimodule and $s \in \mathbb{Z}$. 
\end{enumerate}
In this case  $s = d$ and $U = {}^{\nu}H^1$, where the Nakayama automorphism $\nu = S^2 \circ \tau^{\ell}_{\chi}$, where $\chi$ is the character of the right $H$-action on $\int^{\ell}_H$.
\item[(b)] The following are equivalent:
\begin{enumerate}
\item[(i)] $H$ is $AS$-regular;
\item[(ii)] $H$ is twisted Calabi-Yau;
\item[(iii)] $H$ satisfies Van den Bergh duality, with dualising module $U$ as in (a).
\end{enumerate}
If these equivalent conditions hold then $H$ is Calabi-Yau if and only if $\chi = \varepsilon$ and $S^2$ is inner.

\item[(c)] Assume that $H$ satisfies the equivalent conditions listed in (b), with $\mathrm{gldim}(H) = d$. Then for every $H$-bimodule $M$ and for all  $i = 0, \ldots , d$,
$$ HH^i (H,\,M) \; = \; HH_{d-i}(H, \, {}^{\nu^{-1}}M), $$
where $\nu$ is the Nakayama automorphism as specified in (a).

\end{itemize}
\end{theorem}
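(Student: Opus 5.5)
The plan is to prove the three parts in the order (a), (b), (c), using the Hopf structure to reduce every bimodule computation to a one-sided computation with the trivial module. Write $H^e = H \otimes H^{op}$. The key input is the standard isomorphism
$$\mathrm{Ext}^i_{H^e}(H, M) \;\cong\; \mathrm{Ext}^i_H({}_{\e}\mathbb{K}, M^{\mathrm{ad}}) \quad \text{for every } H\text{-bimodule } M,$$
where $M^{\mathrm{ad}}$ is $M$ with left action $h\cdot m = \sum h_1 m S(h_2)$. This holds because $H^e \otimes_H {}_{\e}\mathbb{K} \cong H$ as $H^e$-modules, so Ext over $H^e$ is obtained by inducing from the diagonal copy of $H$. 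It makes sense once $H$ is flat over that copy, which the free Hopf-module structure of Lemma \ref{xxlem2.4}(i) provides.

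For (a), (i)$\Rightarrow$(ii): take $M = H^e$. Then $(H^e)^{\mathrm{ad}}$ is, up to a twist by $S$, a free left $H$-module tensored with $H$. This gives
$$\mathrm{Ext}^i_{H^e}(H,H^e) \;\cong\; \mathrm{Ext}^i_H({}_{\e}\mathbb{K},{}_HH)\otimes H$$
with an explicitly twisted bimodule structure. By (AS2) and Lemma \ref{xxlem2.4}(iii), this vanishes for $i\neq d$ and, for $i=d$, equals $\int^\ell_H \otimes H$ with $\dim\int^\ell_H=1$. Tracking the actions shows the surviving term is ${}^{\nu}H^1$. Here the right character $\chi$ of $\int^\ell_H$ produces the winding automorphism $\tau^\ell_\chi$, and the passage from left to right through the adjoint action produces $S^2$, giving $\nu = S^2\circ\tau^\ell_\chi$.

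Invertibility of ${}^{\nu}H^1$ requires $S$ to be bijective. This is exactly where Theorem \ref{xxthmbijantipode} is used, so I would cite it rather than reprove it.

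(ii)$\Rightarrow$(iii): Yekutieli–Van den Bergh theory says that $U[d]$, with $U$ the dualising module, is a rigid dualising complex once $H$ has finite injective dimension and the Ext groups are noetherian on both sides. Conversely, for (iii)$\Rightarrow$(i) I would apply $\mathrm{RHom}_H({}_{\e}\mathbb{K},-)$ to the rigid complex. Rigidity together with the Hopf reduction forces $\mathrm{Ext}^i_H({}_{\e}\mathbb{K},H)$ to be $U\otimes_H\mathbb{K}$ concentrated in degree $s$. Since $U$ is invertible, this is one-dimensional, which gives (AS2) via Lemma \ref{xxlem2.4}(ii), and shows $s=d$. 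I expect this converse, in particular showing that $U$ is forced to be of the form ${}^\nu H^1$ without assuming bijectivity of $S$, to be the main obstacle; I would follow \cite{LOWY18}.

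For (b), AS-regularity together with Lemma \ref{xxlem2.4}(i) gives a finite resolution of ${}_{\e}\mathbb{K}$ by finitely generated projectives, and $H$ is noetherian. Applying the induction functor $H^e\otimes_H -$ turns this into a finite resolution of $H$ by finitely generated projective $H^e$-modules. Combined with (a), this yields Van den Bergh duality and the twisted Calabi–Yau property. Conversely, a finite projective $H^e$-resolution of $H$ restricts, via $-\otimes_H{}_{\e}\mathbb{K}$, to a finite resolution of ${}_{\e}\mathbb{K}$, so $\mathrm{gldim}(H)<\infty$ by Lemma \ref{xxlem2.4}(i).

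For the Calabi–Yau criterion, $\nu$ is inner if and only if $S^2\tau^\ell_\chi$ is inner. Evaluating $\e$ on the corresponding bimodule isomorphism forces $\chi=\e$, and then $S^2$ must be inner; the converse is immediate.

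Part (c) is then Van den Bergh's theorem \cite{VdB98} applied to the dualising module ${}^{\nu}H^1$. One uses
$$\mathrm{RHom}_{H^e}(H,M) \;\cong\; \mathrm{RHom}_{H^e}(H,H^e)\otimes^{L}_{H^e}M \;\cong\; {}^{\nu}H^1[-d]\otimes^L_{H^e}M,$$
and then rewrites ${}^{\nu}H^1\otimes_{H^e}M$ as $H\otimes_{H^e}{}^{\nu^{-1}}M$.
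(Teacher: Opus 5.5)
Your route is the one the paper points to: the \cite{BZ08} reduction $\mathrm{Ext}^i_{H^e}(H,M)\cong\mathrm{Ext}^i_H({}_{\e}\mathbb{K},M^{\mathrm{ad}})$, Theorem \ref{xxthmbijantipode} to remove the bijectivity hypothesis, induction along the diagonal copy of $H$ for (b) as in \cite{RRZ14}, and \cite{VdB98} for (c). But it contains a concrete error. You identify the Van den Bergh module $\mathrm{Ext}^d_{H^e}(H,H^e)$ with the module in the rigid dualising complex, and these two are mutually inverse.

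With the standard identification of right $H^e$-modules with bimodules, track the actions in your own computation. The inner left action on $\int^{\ell}_H\otimes H$ is left multiplication on the $H$-factor. The right action of $x$ is right multiplication by $\sum\chi(x_1)S^2(x_2)=\nu(x)$. So $\mathrm{Ext}^d_{H^e}(H,H^e)\cong{}^1H^{\nu}\cong{}^{\nu^{-1}}H^1$, not ${}^{\nu}H^1$.

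Now test rigidity. Suppose $\mathrm{RHom}_{H^e}(H,H^e)\cong U[-d]$ and $R=V[n]$ with $V$ invertible. Then $V\otimes V$ is a finitely generated projective $H^e$-module for the outer structure, so $\mathrm{RHom}_{H^e}(H,R\otimes R)\cong(V\otimes_H U\otimes_H V)[2n-d]$. Hence $R$ is rigid exactly when $n=d$ and $U\otimes_H V\cong H$, i.e.\ $V\cong U^{-1}$. Your step (ii)$\Rightarrow$(iii), ``$U[d]$ is a rigid dualising complex'', is therefore false unless $U\cong U^{-1}$. The rigid complex is $U^{-1}[d]={}^{\nu}H^1[d]$, which is what (a) asserts. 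The paper's wording ``dualising module $U$ as in (a)'' in (b)(iii) blurs the same distinction, but a proof cannot.

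In (c) the slip is made twice and cancels. For any automorphism $\sigma$ one has ${}^{\sigma}H^1\otimes_{H^e}M\cong H\otimes_{H^e}{}^{\sigma}M$, so ${}^{\nu}H^1\otimes_{H^e}M$ is \emph{not} $H\otimes_{H^e}{}^{\nu^{-1}}M$. The correct input $\mathrm{RHom}_{H^e}(H,H^e)\cong{}^{\nu^{-1}}H^1[-d]$ gives the stated formula directly.

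Two smaller points. First, in (iii)$\Rightarrow$(i), invertibility of a bimodule does not by itself make $U\otimes_H{}_{\e}\mathbb{K}$ one-dimensional. Over $\mathbb{K}\times M_2(\mathbb{K})$ there is an invertible bimodule exchanging the simple modules of dimensions $1$ and $2$. Use the Hopf structure instead: $\mathrm{Ext}^s_{H^e}(H,H^e)\cong E\otimes H$, with $E=\mathrm{Ext}^s_H({}_{\e}\mathbb{K},{}_HH)$, is a free left $H$-module of rank $\dim E$. Invertibility makes this rank finite and gives $H\cong M_{\dim E}(H)$, and then the counit $\e$ forces $\dim E=1$. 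The same left-hand computation needs no bijectivity of $S$, and it shows that such an invertible module is ${}^1H^{\nu}$. Invertibility of ${}^1H^{\nu}$ then forces $\nu$, hence $S$, to be bijective. So the identification you flag as the main obstacle falls out of the computation you already have.

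Second, your key isomorphism needs $H^e$ (not $H$) to be flat as a right module over the diagonal copy $\{\sum h_1\otimes S(h_2)\}$. This holds for every Hopf algebra: $b\otimes a\mapsto\sum a_1\otimes S(a_2)b$ is an isomorphism from a free module onto $H^e$, with inverse $x\otimes y\mapsto\sum x_2y\otimes x_1$.
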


\bigskip

\subsection{The antipode (II)}\label{subsectantipode}

Not only does the achievement of the $AS$-Gorenstein condition by a noetherian Hopf algebra $H$ force its antipode $S$ to be bijective (as we have seen in Theorem \ref{xxthmbijantipode}), but in this case $S^4$ also satisfies a (currently weaker) version of Radford's formula \cite{R76} for finite dimensional $H$:

\begin{theorem}\label{xxthmSformula}(\cite[Corollary 4.6]{BZ08},\cite[Theorem 3.1]{LOWY18}) Suppose that the noetherian Hopf $\mathbb{K}$-algebra is $AS$-Gorenstein. Then
$$ S^4 \; = \; \gamma \circ \tau^{\ell}_{\chi}\circ (\tau^r_{\chi})^{-1}$$
where $\gamma$ is an inner automorphism of $H$ and (as in Theorem \ref{dualisingthm}(a)) $\chi$ is the character of the left integral of $H$.
\end{theorem}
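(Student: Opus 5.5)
The plan is to compare two descriptions of the Nakayama automorphism $\nu$ of $H$. These come from the rigid dualising complex provided by Theorem \ref{dualisingthm}(a), computed once using left structures and once using right structures. Since $H$ is $AS$-Gorenstein, Theorem \ref{xxthmbijantipode} makes $S$ bijective. Theorem \ref{dualisingthm}(a) then gives a rigid dualising complex $R = {}^{\nu}H^1[d]$ with $\nu = S^2\circ\tau^{\ell}_{\chi}$, where $\chi$ is the character of the right $H$-action on $\int^{\ell}_H$.

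First I will run the same argument on the right, i.e. apply Theorem \ref{dualisingthm}(a) to the opposite or co-opposite Hopf algebra, which is again noetherian $AS$-Gorenstein with bijective antipode. This produces a second rigid dualising complex ${}^{\nu'}H^1[d]$. Here $\nu' = S^{-2}\circ\tau^{r}_{\chi'}$, where $\chi'$ is the character of the left action on $\int^r_H$. The key structural input is uniqueness of the rigid dualising complex up to isomorphism (Van den Bergh). Uniqueness forces ${}^{\nu}H^1 \cong {}^{\nu'}H^1$ as bimodules. Hence $\nu^{-1}\nu'$, or equivalently $\nu'\nu^{-1}$, is inner. This gives
$$S^2\tau^{\ell}_{\chi} \;=\; \gamma_0\circ S^{-2}\tau^r_{\chi'}$$
for some inner automorphism $\gamma_0$.

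The next step is to identify $\chi'$ with $\chi^{-1} = \chi\circ S$, or with $\chi$, depending on conventions. To do this I will use that $\int^r_H$ is obtained from $\int^{\ell}_H$ via the antipode. Applying $S$ to a projective resolution converts left modules to right modules. Under this conversion the one-dimensional module ${}_{\varepsilon}\mathbb{K}$ goes to $\mathbb{K}_{\varepsilon}$, and $\mathrm{Ext}^d_H({}_\varepsilon\mathbb{K},H)$ corresponds to $\mathrm{Ext}^d_H(\mathbb{K}_\varepsilon,H)$ twisted by $S$. Comparing the actions then determines $\chi'$ in terms of $\chi$. Alternatively, the relation can be read off from the proof of \cite[Lemma 3.2]{BZ08}.

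Finally I will rearrange, using three elementary identities:
\begin{itemize}
\item $S\circ\tau^{\ell}_{\chi} = \tau^{r}_{\chi\circ S}\circ S$, coming from $\Delta S = (S\otimes S)\Delta^{op}$;
\item $(\tau^{\ell}_{\chi})^{-1} = \tau^{\ell}_{\chi\circ S}$;
\item left and right winding automorphisms commute.
\end{itemize}
Moving all powers of $S$ to one side, and noting that conjugating an inner automorphism by $S^2$ keeps it inner, the displayed relation becomes
$$S^4 = \gamma\circ\tau^{\ell}_{\chi}\circ(\tau^r_{\chi})^{-1}$$
with $\gamma$ inner.

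I expect the main obstacle to be the sign and convention bookkeeping in the character identification. One must determine correctly whether $\chi'$ equals $\chi$ or $\chi\circ S$, and on which side each winding automorphism acts. An error there would produce $\tau^{\ell}_\chi\tau^r_\chi$ instead of $\tau^{\ell}_\chi(\tau^r_\chi)^{-1}$. I would check the final form against the finite dimensional case, where it must reduce to Radford's formula \cite{R76}.
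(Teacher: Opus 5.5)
Your strategy is the one the paper describes after the statement: compute the Nakayama automorphism a second time, for the co-opposite Hopf structure on the same algebra, and compare the two via uniqueness of the rigid dualising complex. But your formula for $\nu'$ is wrong, and the step you leave ``depending on conventions'' is exactly where the argument breaks.

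For $H'=(H,\Delta^{op},S^{-1},\varepsilon)$ the algebra \emph{and the counit} are unchanged. So $\int^{\ell}_{H'}$ is literally $\mathrm{Ext}^d_H({}_{\e}\mathbb{K},{}_HH)=\int^{\ell}_H$, with the same character $\chi$. Only $S\mapsto S^{-1}$ and $\tau^{\ell}\mapsto\tau^{r}$ change, so $\nu'=S^{-2}\circ\tau^{r}_{\chi}$, and no comparison of integrals via the antipode is needed.

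The character of $\int^r_H$ appears only if you use $H^{\mathrm{op}}$ or $H^{\mathrm{op},\mathrm{cop}}$. But then the algebra changes: its rigid dualising complex is that of $H$ with the sides swapped, so the comparison must be with $\nu^{-1}$, not $\nu$. For $H^{\mathrm{op}}$, whose coproduct is unchanged, the comparison is the tautology $S^{-2}\circ\tau^{\ell}_{\chi\circ S}=\nu^{-1}$ and says nothing about $S^4$.

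Your $\nu'=S^{-2}\circ\tau^r_{\chi'}$, with $\chi'$ the character of $\int^r_H$, mixes these two routes. Your own antipode argument forces $\chi'=\chi\circ S=\chi^{-1}$; this is not a convention choice. Substituting it gives $S^4\equiv(\tau^{\ell}_{\chi}\circ\tau^r_{\chi})^{-1}$ modulo inner automorphisms, which is precisely the wrong answer you anticipated.

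Now use the correct $\nu'$. Since $\chi\circ S^2=\chi$, $S^2$ commutes with both winding automorphisms. Hence $\nu\circ\nu'^{-1}=S^4\circ\tau^{\ell}_{\chi}\circ(\tau^r_{\chi})^{-1}$ is inner, that is, $S^4=\gamma\circ\tau^r_{\chi}\circ(\tau^{\ell}_{\chi})^{-1}$.

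This is the displayed formula with $\chi$ replaced by $\chi\circ S$, the character of $\int^r_H$. The $H^{\mathrm{op},\mathrm{cop}}$ route, compared correctly with $\nu^{-1}$, produces it in exactly that shape. It is also the form that specialises to Radford's $S^4(h)=g(\alpha\rightharpoonup h\leftharpoonup\alpha^{-1})g^{-1}$, where $\alpha$ is the character of the left integrals.

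The two readings differ by $(\tau^{\ell}_{\chi}\circ(\tau^r_{\chi})^{-1})^2$, which need not be inner. So your closing claim, that the rearrangement yields $\gamma\circ\tau^{\ell}_{\chi}\circ(\tau^r_{\chi})^{-1}$ with $\chi$ the character of $\int^{\ell}_H$, is asserted rather than derived. Which integral's character belongs in the statement should be settled by the Radford check you propose, not by choosing $\chi'$ to fit the target.
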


The proof of Theorem \ref{xxthmSformula} in \cite{BZ08} used the following amusing trick: observe that the Nakayama automorphism does \emph{not} involve the coalgebra structure of $H$, so we can evaluate the Nakayama automorphism $\nu'$ of $H' := (H, \Delta^{op}, S^{-1}, \varepsilon)$ using again Theorem \ref{dualisingthm}(a), then equate $\nu$ with $\nu'$. The disadvantage of this approach is that equality - that is, uniqueness - holds only up to an inner automorphism. 
Radford shows that when $H$ is finite dimensional, $\gamma$ is conjugation by the grouplike element of $H$ corresponding to the left integral of $H^{\ast}$. So we restate here the obvious

\begin{question}(\cite[Question 4.6 ]{BZ08}) In Theorem \ref{xxthmSformula}, what is $\gamma$?
\end{question}

\medskip

\subsection{Goldie quotient rings, homogeneity, finite-dimensional Hopf subalgebras}\label{Goldie} Recall that if $R$ is a ring and $\delta$ is a dimension function such that $\delta (M) \in \mathbb{R}_{\geq 0}$ for all finitely generated $R$-modules $M$, then a finitely generated $R$-module $M$ is $\delta$-\emph{homogeneous} if $\delta (N) = \delta (M)$ for every non-zero submodule $N$ of $M$. 

\begin{theorem}\label{qringthm} Let $H$ be an affine noetherian Auslander Gorenstein and $GK$-Cohen Macaulay Hopf $\mathbb{K}$-algebra.
\begin{enumerate}
\item[(i)] $H$ is (left and right) $GK$-homogeneous.
\item[(ii)] $H$ has a quasi-Frobenius Goldie quotient ring.
\end{enumerate}
\end{theorem}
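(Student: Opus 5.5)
Part (i) rests on the fact, recalled in Remarks \ref{xxrems2.3}(v), that for an Auslander Gorenstein algebra the function $\rho(M) = d - j_H(M)$ is an exact dimension function, where $d = \mathrm{injdim}(H)$. The $GK$-Cohen Macaulay hypothesis identifies $\rho$ with $\mathrm{GKdim}$ on finitely generated modules. So it suffices to show that every non-zero left (or right) ideal $L$ of $H$ has $j_H(L) = j_H(H) = 0$. That would give $\mathrm{GKdim}(L) = \mathrm{GKdim}(H)$, which is exactly $GK$-homogeneity.

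For this I would use the standard Auslander-Gorenstein argument. Suppose $0 \neq L \subseteq H$ with $j_H(L) = k > 0$, so $\mathrm{Hom}_H(L,H) = 0$. But the inclusion $L \hookrightarrow H$ is a non-zero element of $\mathrm{Hom}_H(L,H) = \mathrm{Ext}^0_H(L,H)$. This contradicts $k > 0$, and it needs only the definition of grade. Thus every non-zero submodule $N$ of ${}_HH$ has $j(N) = 0$, hence $\mathrm{GKdim}(N) = \mathrm{GKdim}(H)$ by (\ref{xxeqn2.2}). The right-hand version is identical.

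Part (ii) then follows from known results. I would first use the homogeneity in (i) to show that $H$ has an artinian classical quotient ring. Let $N$ be the prime radical. The criterion I plan to invoke is the one for noetherian rings with an exact, symmetric, ideal-invariant dimension function in which the ring is homogeneous (see \cite{KL00} or \cite{McCR87}, Small's theorem and its dimension-theoretic variants). Under this criterion, homogeneity on both sides forces $H$ to satisfy the Ore condition with respect to $\mathcal{C}(N)$. Concretely, for $c \in \mathcal{C}(N)$ one shows that $c$ is regular: $\mathrm{GKdim}(H/Hc) < \mathrm{GKdim}(H)$ by the usual argument on semiprime factors, so exactness of $\mathrm{GKdim}$ together with homogeneity kills the annihilators. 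The exactness, symmetry and ideal invariance of $\mathrm{GKdim}$ here come from Remarks \ref{xxrems2.3}(v).

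With the artinian quotient ring $Q$ in hand, I would show that $Q$ is quasi-Frobenius. Localisation is flat, and $\mathrm{injdim}(H) = d < \infty$ gives $\mathrm{injdim}(Q) \leq d$ on both sides. An artinian ring of finite injective dimension is self-injective, so $Q$ is QF. A cleaner route to self-injectivity, via the Auslander condition and $\mathrm{Ext}^i_Q(M,Q) = Q\otimes\mathrm{Ext}^i_H(M',H)$, would show that all finitely generated $Q$-modules have grade $0$; localisation kills modules of positive grade, since these have smaller $\mathrm{GKdim}$ and hence are $\mathcal{C}(N)$-torsion by homogeneity.

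I expect the main obstacle to be the regularity of elements of $\mathcal{C}(N)$, that is, the existence of the artinian quotient ring. This is where both exactness of $\mathrm{GKdim}$ (guaranteed only via $\rho$) and two-sided homogeneity are essential, and I would cite the relevant criterion rather than reprove it.
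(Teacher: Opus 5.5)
\paragraph{Main gap: the quasi-Frobenius step.}
Your part (i) is the paper's argument. Your route to an artinian classical quotient ring is also essentially the paper's: homogeneity plus exactness of $\mathrm{GKdim}$ obtained through $\rho$, fed into a Small/Levasseur-type criterion, where the paper cites \cite[Theorem 5.3]{Lev92}.

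The gap is in your main argument that $Q$ is quasi-Frobenius. The bound $\mathrm{injdim}(Q)\le d$ is fine. But ``an artinian ring of finite injective dimension is self-injective'' is false for noncommutative rings; it is a commutative intuition, where finite injective dimension equals depth $=0$.

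For example, $T=\begin{pmatrix}\mathbb{K}&\mathbb{K}\\0&\mathbb{K}\end{pmatrix}$ is finite dimensional and hereditary, so $\mathrm{injdim}(T)=\mathrm{gldim}(T)=1$ on both sides. Yet $T$ is not self-injective. Over a self-injective artinian ring projectives are injective, so finite global dimension would force $T$ to be semisimple, which it is not. One checks moreover that $T$ satisfies the Auslander condition. So even knowing that $Q$ inherits the Auslander--Gorenstein property would not rescue this line: the $GK$-Cohen Macaulay property and the homogeneity of $H$ have to enter.

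\paragraph{The fix: your second route.}
Your ``cleaner route'' is therefore not an alternative but the actual proof. It should be aimed at the vanishing of the higher Ext groups over $Q$, rather than at ``grade $0$''.

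\begin{enumerate}
\item[(a)] Let $M$ be a finitely generated $Q$-module obtained by localising a finitely generated $H$-module $M'$. Then $\mathrm{Ext}^i_Q(M,Q)$ is the localisation of $\mathrm{Ext}^i_H(M',H)$.
\item[(b)] For $i\ge 1$ the Auslander condition gives $j(\mathrm{Ext}^i_H(M',H))\ge i\ge 1$.
\item[(c)] So (\ref{xxeqn2.2}) gives $\mathrm{GKdim}(\mathrm{Ext}^i_H(M',H))<\mathrm{GKdim}(H)$. Hence this module is torsion, and it localises to $0$.
\item[(d)] Apply this to $M=Q/I$ for one-sided ideals $I$. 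Baer's criterion on each side then shows that $Q$ is self-injective, hence QF.
\end{enumerate}

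One further point about step (c). The claim that smaller $\mathrm{GKdim}$ implies torsion does not follow from homogeneity alone. Homogeneity only makes the relevant annihilator left ideal essential, and in a non-semiprime ring an essential left ideal need not contain a regular element. You need in addition, for instance, that $\mathrm{GKdim}(H/P)=\mathrm{GKdim}(H)$ for every minimal prime $P$, which follows from homogeneity together with symmetry of $\mathrm{GKdim}$ on bimodules. This whole step is what the paper takes from \cite[Corollary 6.2(1)]{ASZ98}.
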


\begin{proof}(i) If $I$ is a non-zero right or left ideal of $H$ then $j(I) = 0$ by Definition \ref{xxdefn2.1}(v) of the function $j$, so $\mathrm{GKdim}(I) = \mathrm{GKdim}(H)$ thanks to the $GK$-Cohen Macaulay condition (\ref{xxeqn2.2}).
\medskip

\noindent(ii) That $H$ has an artinian Goldie quotient ring follows from part (i) and Remark \ref{xxrems2.3}(v), since  Levasseur's \cite[Theorem 5.3]{Lev92} ensures that $Q(A)$ exists and is artinian for any noetherian algebra $A$ with finite GK-dimension, provided $GK$-dimension is exact on $A$-modules and $A$ is $GK$-homogeneous. That the quotient ring is quasi-Frobenius is \cite[Corollary 6.2(1)]{ASZ98}.
\end{proof}

It seems reasonable to suspect that the presence of the $GK$-Cohen Macaulay property amongst the hypotheses of Theorem \ref{qringthm} is a red herring, since its role in the proof is simply to guarantee that the dimension function $\rho := d - j(\,-\,)$ has desirable properties - for example exactness and symmetry on bimodules - which it may well always possess. So, since the $GK$-Macaulay property is often absent for a noetherian Hopf algebra, even one of finite $GK$-dimension (see for instance Theorem \ref{xxthm3.3}(iv)), we ask: 

\begin{question}\label{qringqn} Does Theorem \ref{qringthm} remain true if the $GK$-Cohen Macaulay property is omitted?
\end{question}

\medskip

In view of Remark \ref{gldimrem} and the hope that Theorem \ref{xxthm3.9}(i) may be a harbinger of more widely valid structural consequences of regularity, here are a couple of observations.

\begin{theorem}\label{gldimthm}Let $H$ be a noetherian Hopf $\mathbb{K}$-algebra of finite global dimension.
\begin{enumerate}%
\item[(i)] Let $A$ be a finite dimensional Hopf subalgebra of $H$. Then $A$ is semisimple.
\item[(ii)] Suppose that $H$ is Auslander regular and $GK$-Cohen Macaulay. Then $H$ is semiprime.
\end{enumerate}
\end{theorem}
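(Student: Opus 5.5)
For part (i), I would use that $H$ is free (in particular projective) as a left and right $A$-module. This is the Nichols--Zoeller type freeness of a noetherian Hopf algebra over a finite dimensional Hopf subalgebra; Skryabin's results, as used in the proof of Theorem \ref{GKoneregularthm}(iv), give it, since $H$ is noetherian and hence weakly finite. Write $d := \mathrm{gldim}(H) < \infty$ and take a projective resolution of ${}_{\varepsilon}\mathbb{K}$ over $H$ of length at most $d$, which exists by Lemma \ref{xxlem2.4}(i). Restricting it to $A$ gives a finite projective resolution of the trivial $A$-module ${}_{\varepsilon}\mathbb{K}$, because projective $H$-modules are projective $A$-modules. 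Now $A$ is a finite dimensional Hopf algebra, hence Frobenius, hence self-injective, so any finite projective resolution splits from the top down, exactly as in the proof of Theorem \ref{GKoneregularthm}(v). Therefore ${}_{\varepsilon}\mathbb{K}$ is projective over $A$, and $\mathrm{gldim}(A) = \mathrm{prdim}_A({}_{\varepsilon}\mathbb{K}) = 0$ by Lemma \ref{xxlem2.4}(i) applied to $A$. So $A$ is semisimple, which is Maschke's theorem for Hopf algebras. The only delicate input here is the projectivity of $H$ over $A$, and I would cite Skryabin for it.

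For part (ii), I would run the argument behind Theorem \ref{xxthm3.9}(i)(a) in the non-PI setting. By Theorem \ref{qringthm}, $H$ is $GK$-homogeneous and has an artinian, indeed quasi-Frobenius, Goldie quotient ring $Q$. Let $N$ be the prime radical, which is nilpotent. Since $Q$ exists, $H$ is a subring of a semilocal artinian ring, and $N$ is the intersection of the finitely many minimal primes.

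The key step is to show $N = 0$ using finite global dimension together with the Auslander--Cohen--Macaulay dimension function $\rho = d - j(-)$, which by Remark \ref{xxrems2.3}(v) is exact, symmetric, ideal invariant and equal to $\mathrm{GKdim}$. I would invoke the theorem of Stafford--Zhang \cite[Theorem 5.4(i)]{StZh94}, as used in the proof of Theorem \ref{xxthm3.9}(i)(a). Its argument shows that a noetherian Auslander regular ring with an exact, symmetric, ideal invariant, Cohen--Macaulay dimension function, which in addition is homogeneous (here $\mathrm{GKdim}$ by Theorem \ref{qringthm}(i)), is a finite direct sum of prime rings. The PI hypothesis in Theorem \ref{xxthm3.9} was only used there to guarantee these properties of the dimension function.

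The main obstacle is checking that the Stafford--Zhang argument really uses only those properties, and not the PI condition or finiteness over the centre. If it does not, I would argue directly instead. Suppose $N \neq 0$ and take a minimal prime $P$. Using Auslander regularity, $\mathrm{Ext}^{j}(H/P, H)$ has pure $\rho$-dimension, and one would compare the grades of $H/P$ and $N$. Homogeneity forces $j(H/P) = 0$, so $H/P$ embeds in a free module up to torsion. The finite global dimension should then force the annihilator of $P$ to be a direct summand, giving a splitting $H = P \oplus \mathrm{ann}(P)$ and hence semiprimeness.
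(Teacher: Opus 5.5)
Your part (i) follows the paper's argument: freeness of $H$ over $A$ from Skryabin \cite[Theorem 6.1(ii)]{Sk07}, restriction of a finite resolution of ${}_{\varepsilon}\mathbb{K}$ to $A$, and the fact that a Frobenius algebra of finite global dimension is semisimple.

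Part (ii) has a genuine gap. Everything rests on the claim that the argument of \cite[Theorem 5.4(i)]{StZh94} works without the PI hypothesis. You flag this yourself as unchecked, and nothing in the paper supports it: that theorem is only ever used there for PI algebras. You are also aiming at a stronger conclusion, a finite direct sum of prime rings, than the statement asks for. No result in the paper gives that outside the PI case. Your fallback does not close the gap. The assertion that finite global dimension ``should force'' $\mathrm{ann}(P)$ to be a direct summand, so that $H = P \oplus \mathrm{ann}(P)$, is unjustified. The preceding step (``$H/P$ embeds in a free module up to torsion'') is too imprecise to build on.

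The missing idea is simpler and uses only tools you already have. By Theorem \ref{qringthm}(ii), $H$ has a quasi-Frobenius Goldie quotient ring $Q$. Since $Q$ is an Ore localisation of $H$, it is flat over $H$.
\begin{itemize}
\item Let $V$ be a simple left $Q$-module and $V_0 \neq 0$ an $H$-submodule of $V$. Then $V \cong Q \otimes_H V_0$.
\item Take a finite projective $H$-resolution of $V_0$, which exists because $\mathrm{gldim}(H) < \infty$. Applying $Q \otimes_H -$ gives a finite projective $Q$-resolution of $V$.
\item Hence the artinian ring $Q$ has finite global dimension.
\item Projective $Q$-modules are injective, so the same top-down splitting you used in part (i) shows that every simple $Q$-module is projective.
\item Therefore $Q$ is semisimple artinian, and Goldie's theorem gives that $H$ is semiprime.
\end{itemize}
This is the paper's proof. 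It needs neither the Stafford--Zhang theorem nor any analysis of minimal primes.
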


\begin{proof}(i) By \cite[Theorem 6.1(ii)]{Sk07} $H$ is a free left and right $A$-module. Hence, since $\mathrm{gldim}(H) < \infty$, we can restrict a finite projective $H$-resolution of ${}_{\varepsilon} \mathbb{K}$ to $A$, to deduce that $\mathrm{prdim}_A ({}_{\varepsilon} \mathbb{K}) < \infty$. Since $A$ is a Hopf algebra, Lemma \ref{xxlem2.4}(i) implies that $\mathrm{gldim}(A)$ is finite. However, $A$, being a Hopf algebra, is Frobenius by \cite{LS69}. Since Frobenius algebras of finite global dimension are semisimple (because their projective modules are injective), the result follows.

\medskip

\noindent(ii) By Theorem \ref{qringthm}(ii) $H$ has a quasi-Frobenius Goldie quotient algebra $Q$. Every simple (say, left) $Q$-module $V$ can be realised as $V = Q \otimes_H V_0$ for any non-zero $H$-submodule $V_0$ of $V$. Since $H$ is regular, there is a finite projective $H$-resolution $\mathcal{P}$ of $V_0$. Since $Q$ is an Ore localisation of $H$, $Q$ is a flat right $H$-module,  so that $Q \otimes_H \mathcal{P}$ is a finite $Q$-projective resolution of $V = Q \otimes_H V_0$. Hence $\mathrm{gldim}(Q) < \infty$. But this forces $Q$ to be senisimple artinian (since projective $Q$-modules are injective). Therefore $H$ is semiprime by Goldie's theorem \cite[Theorem 2.3.6]{McCR87}. \end{proof}

\begin{remarks}\label{gldimthmremks}(i) In fact (as was pointed out to me by Quanshui Wu), in the proof of Theorem \ref{gldimthm}(i) we only need to assume that $H$ is \emph{weakly finite}, rather than noetherian, since \cite[Theorem 6.1(ii)]{Sk07} is valid under this much weaker finiteness condition.

\medskip
\noindent(ii) If Question \ref{qringqn} has a positive answer then the GK-Cohen Macaulay hypothesis can be omitted from Theorem \ref{gldimthm}(ii). 
\end{remarks} 

\bigskip

\subsection{Representation theory of regular PI Hopf algebras}\label{xxsubsecPIrep} We assume for simplicity throughout this subsection that 
\begin{equation}\label{fieldgood} \textit{ the ground field }\mathbb{K} \textit{ is algebraically closed.}
\end{equation}
Suppose that $Z$ denotes the centre of the ring $R$ and that\footnote{The $\mathrm{PI-degree}$ of the prime affine noetherian PI $\mathbb{K}$-algebra $R$ with centre $Z$ is $\sqrt{\mathrm{dim}_{Q(Z)}(Q(R))}$.}
\begin{equation}\label{Rhypo} R \textit{ is a prime affine noetherian }\mathbb{K}\textit{-algebra, a finitely generated } Z\textit{-module, } \mathrm{PI-degree}(R) = n.
\end{equation} 
So $Z$ is an affine domain by the Artin-Tate lemma, \cite[I.13.4]{BG02}. 

Then the simple $R$-modules all have finite $\mathbb{K}$-dimension, with the maximal such dimension being the PI-degree $n$ of $R$, \cite[Theorem I.13.5]{BG02}.  Moreover, \emph{most} of the simple $R$-modules $V$ have $\mathrm{dim}_{\mathbb{K}}(V) = n$. To make this claim precise, first recall that each simple $R$-module $V$ is annihilated by a maximal ideal $\mathfrak{m}_V$ of $Z$, by  the Nullstellensatz, again part of \cite[Theorem I.13.5]{BG02}. And, conversely, for each maximal ideal $\mathfrak{m}$ of $Z$, $Z/\mathfrak{m} \cong \mathbb{K}$ and $R/\mathfrak{m}R$ is a non-zero finite dimensional $\mathbb{K}$-algebra, by (\ref{fieldgood}), (\ref{Rhypo}) and Nakayama's lemma. Then, by (for example) \cite[Theorem III.1.6]{BG02},  
\begin{align*}  \mathcal{A} \; &:= \; \{\mathfrak{m} \in \mathrm{Maxspec}(Z) \, : \exists\, V  \textit{ simple } R\textit{-module,}\; \mathfrak{m}V = 0, \mathrm{dim}_{\mathbb{K}}(V) = n\}\\
&= \; \{\mathfrak{m} \in \mathrm{Maxspec}(Z) \, : \; R/\mathfrak{m}R \cong M_n (\mathbb{K}) \}.
\end{align*}
Furthermore, by \cite[Theorem III.1.7]{BG02}, $\mathcal{A}$ is a dense open subset of $\mathrm{Maxspec}(Z)$, called the \emph{Azumaya locus} of $R$.

We say now that $R$ is \emph{height one Azumaya} if the proper closed subset $\mathrm{Maxspec}(Z) \setminus \mathcal{A}$ has codimension at least 2 in $\mathrm{Maxspec}(Z)$. Denote by $\mathcal{S}$ the \emph{singular locus} of $\mathrm{Maxspec}(Z)$; that is, 
$$\mathcal{S} \; := \; \{\mathfrak{m} \in \mathrm{Maxspec}(Z) \, : \, \mathrm{gldim}(Z_{\mathfrak{m}}) = \infty \}.$$
We can now state

\begin{theorem}\label{Azumthm}(\cite[Theorem 3.8]{BG97}, \cite[Theorem III.1.8]{BG02}) Let $\mathrm{K}, \, R$ and $Z$ satisfy (\ref{fieldgood}) and (\ref{Rhypo}), and keep the notation $\mathcal{A}$ and $\mathcal{S}$ as above. Suppose that $R$ is Auslander regular and $GK$-Cohen Macaulay. If $R$ is height one Azumaya, then
$$ \mathcal{A} \; = \; \mathrm{Maxspec}(Z) \setminus \mathcal{S}.$$ 
\end{theorem}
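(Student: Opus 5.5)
We prove the two inclusions $\mathcal{A} \subseteq \mathrm{Maxspec}(Z)\setminus\mathcal{S}$ and $\mathrm{Maxspec}(Z)\setminus\mathcal{S} \subseteq \mathcal{A}$.

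\emph{First inclusion.} Let $\mathfrak{m} \in \mathcal{A}$. Over a dense open neighbourhood of $\mathfrak{m}$ the localisation $R_{\mathfrak{m}}$ is Azumaya over $Z_{\mathfrak{m}}$; this is the standard Artin--Procesi characterisation of the Azumaya locus. An Azumaya algebra is a faithfully flat central separable extension, and $Z_{\mathfrak{m}}$ is a direct summand of $R_{\mathfrak{m}}$ as a $Z_{\mathfrak{m}}$-bimodule via the reduced trace. Hence $\mathrm{gldim}(Z_{\mathfrak{m}}) \leq \mathrm{gldim}(R_{\mathfrak{m}})$. Separability is what makes this work: a $Z_{\mathfrak{m}}$-resolution can be obtained by restricting an $R_{\mathfrak{m}}$-resolution of $R_{\mathfrak{m}} \otimes_{Z_{\mathfrak{m}}} M$ and using the splitting.

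Since $R$ is Auslander regular, $\mathrm{gldim}(R_{\mathfrak{m}}) \leq \mathrm{gldim}(R) < \infty$, because central localisation is flat. So $\mathfrak{m} \notin \mathcal{S}$. Only regularity of $R$ is used in this direction.

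\emph{Second inclusion.} Let $\mathfrak{m}$ be a maximal ideal with $Z_{\mathfrak{m}}$ regular. The plan has three steps.

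\begin{enumerate}
\item[(a)] Show that $R_{\mathfrak{m}}$ is a maximal Cohen--Macaulay $Z_{\mathfrak{m}}$-module. This is where the Auslander regular and $GK$-Cohen Macaulay hypotheses enter, exactly as in the proof of Theorem \ref{xxthm3.9}(i)(b),(e) via \cite[Theorem 4.8, Theorem 3.7]{BrMcL17}. Since $R$ is affine prime PI, Schelter's theorem makes all maximal ideals have equal height. As $Z_{\mathfrak{m}}$ is a regular local ring, the Auslander--Buchsbaum formula then shows that $R_{\mathfrak{m}}$ is a free $Z_{\mathfrak{m}}$-module, necessarily of rank $n^2$.
\item[(b)] Use the height one Azumaya hypothesis. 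Every height one prime $\mathfrak{p} \subseteq \mathfrak{m}$ of $Z$ lies in the Azumaya locus, since the non-Azumaya locus has codimension at least $2$. So $R_{\mathfrak{p}}$ is Azumaya over $Z_{\mathfrak{p}}$. Consider the discriminant, meaning the determinant of the reduced trace form on the free module $R_{\mathfrak{m}}$. It is invertible at every height one prime, so it generates an ideal not contained in any height one prime of the normal (indeed regular) local domain $Z_{\mathfrak{m}}$. Because $Z_{\mathfrak{m}}$ is a UFD, a principal ideal in no height one prime is the unit ideal, so the discriminant is a unit.
\item[(c)] A free module of rank $n^2$ with nondegenerate reduced trace form is Azumaya over its centre. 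Hence $R/\mathfrak{m}R \cong M_n(\mathbb{K})$ and $\mathfrak{m} \in \mathcal{A}$.
\end{enumerate}

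An alternative to (b)--(c) is the reflexive-module argument. The natural map $R_{\mathfrak{m}} \otimes R_{\mathfrak{m}}^{op} \to \mathrm{End}_{Z_{\mathfrak{m}}}(R_{\mathfrak{m}})$ is a homomorphism between free $Z_{\mathfrak{m}}$-modules of the same rank. It is an isomorphism in codimension one, and its determinant then shows it is an isomorphism.

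The main obstacle is step (a): extracting freeness of $R_{\mathfrak{m}}$ over the regular local ring $Z_{\mathfrak{m}}$ from the noncommutative homological hypotheses. I would pass through the Cohen--Macaulay property of $R$ as a $Z$-module, using that the grade $j_R$ agrees with commutative depth data via Ext over $Z$ for a finite central extension. This is the content cited from \cite{BrMcL17}, and it may require care with the localisation at $\mathfrak{m}$.
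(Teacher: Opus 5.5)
Your proof is correct. It is essentially the standard argument behind the results cited for this theorem; the paper gives no proof of its own beyond \cite{BG97}, \cite{BG02} and Remark \ref{Azumrems}(i). That argument runs as follows: Azumaya points are smooth by restricting finite $R_{\mathfrak{m}}$-resolutions to $Z_{\mathfrak{m}}$; at a smooth point, $R_{\mathfrak{m}}$ is $Z_{\mathfrak{m}}$-free by the Cohen--Macaulay property over the centre plus Auslander--Buchsbaum; and a determinant argument (the discriminant, or the map $R_{\mathfrak{m}}\otimes_{Z_{\mathfrak{m}}}R_{\mathfrak{m}}^{op}\to\mathrm{End}_{Z_{\mathfrak{m}}}(R_{\mathfrak{m}})$) then turns Azumaya in codimension one into Azumaya.

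One small correction. Splitting $Z_{\mathfrak{m}}\hookrightarrow R_{\mathfrak{m}}$ ``via the reduced trace'' needs $n$ invertible in $\mathbb{K}$, which fails for instance for $U(\mathfrak{g})$ in characteristic $p$. Take the splitting instead from the freeness of $R_{\mathfrak{m}}$, extending $1\notin\mathfrak{m}R_{\mathfrak{m}}$ to a basis. For the same reason, your characteristic-free alternative via $R_{\mathfrak{m}}\otimes_{Z_{\mathfrak{m}}}R_{\mathfrak{m}}^{op}\to\mathrm{End}_{Z_{\mathfrak{m}}}(R_{\mathfrak{m}})$ is the safer way to finish (b)--(c).
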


\begin{remarks}\label{Azumrems}(i) For any pair $Z \subseteq R$ satisfying (\ref{fieldgood}) and (\ref{Rhypo}) it is not hard to show that every Azumaya point of $\mathrm{Maxspec}(Z)$ is smooth; that is,
$$ \mathcal{A} \; \subseteq \; \mathrm{Maxspec}(Z) \setminus \mathcal{S};$$
thus the height one Azumaya condition is only needed for the reverse inclusion.

\medskip
\noindent (ii) There are large classes of Hopf $\mathbb{K}$-algebras to which Theorem \ref{Azumthm} applies: if $\mathrm{K}$ is algebraically closed and $H$ is a prime affine noetherian Hopf $\mathbb{K}$-algebra satisfying a polynomial identity and with $\mathrm{gldim}(H) < \infty$, then by Theorem \ref{xxthm3.9}(i) $H$ is a finite module over its centre, and is Auslander-regular and $GK$-Cohen Macaulay. The key issue then is whether $H$ is height one Azumaya.
This has been shown to be so in many cases - for  example, quantised enveloping algebras and quantised function algebras with deformation parameter a root of unity are dealt with in detailed discussions in \cite[$\S$4]{BG97}; see also \cite[$\S$III.8.5]{BG02}. 

\medskip
\noindent (iii) The height one Azumaya hypothesis fails for some  prime regular PI Hopf algebras, and - with it - the conclusion of Theorem \ref{Azumthm}. Trivially, the hypothesis and the theorem both fail for any noncommutative regular prime Hopf algebra of $GK$-dimension one. More significantly, they fail also for $U(\mathfrak{sl}(2, \mathbb{K}))$ when $\mathrm{char}\mathbb{K} = 2$; see \cite[Example 3.4(ii), Theorem .104]{BG97} for details.

\medskip

\noindent (iv) One should bear in mind that, given any particular class $\mathcal{C}$ of affine noetherian PI Hopf algebras to which Theorem \ref{Azumthm} applies, that result is only the starting point in analysing the finite dimensional representation theory of the algebras in $\mathcal{C}$. For many such classes $\mathcal{C}$ - for instance enveloping algebras of finite dimensional Lie algebras in positive characteristic -  there is for each $H \in \mathcal{C}$ a canonical central Hopf subalgebra $C_H$ of $H$, with $H$ a free $C_H$-module of finite rank $n$. One can then analyse the finite dimensional representation theory of $H$ by studying the bundle of $n$-dimensional algebras $H/\mathfrak{m}H$ as $\mathfrak{m}$ ranges across $\mathrm{Maxspec}(C_H)$, a crucial case being the $n$-dimensional Hopf algebra $H/\mathfrak{m}_{\varepsilon}H$, where $\mathfrak{m}_{\varepsilon}$ is the augmentation ideal of $H$. 

For many important such classes $\mathcal{C}$ there is an underlying Poisson structure $\mathcal{P}$ on the algebra $C_H$, and it transpires that the isomorphism class of the algebra $H/\mathfrak{m}H$ is constant for maximal ideals $\mathfrak{m}$ belonging to the same symplectic leaf or - more generally - containing the same Poisson core. The interested reader should consult \cite{DCP93, BGo03} and some of the many later papers using these ideas.
\end{remarks}

\medskip

A natural question arising from Theorem \ref{Azumthm} and Remarks \ref{Azumrems} is: how does one go about calculating the Azumaya locus $\mathcal{A}$ of a given algebra $R$, given that the centre $Z(R)$ and/or its singular locus may be hard to determine? An answer that has proved successful in many cases is the use of discriminant ideal(s), developed in recent years by Yakimov and his collaborators. We sketch here a simplified version of one such result, \cite[Main Theorem]{BY18}. Moreover, as we indicate, this tactic of using discriminant ideals yields information also about the simple modules whose dimension is less than the PI-degree. Write, for $\mathfrak{m} \in \mathrm{Maxspec}(Z)$, 
$$\mathrm{Sim}_{\mathfrak{m}}(R) \; = \; \{V \textit{ simple } R\textit{-module }: \mathfrak{m}V = 0\}, $$
Since $\mathrm{dim}_{\mathbb{K}}(R/\mathfrak{m}R)$ is finite, $\mathrm{Sim}_{\mathfrak{m}}(R)$ is a finite set.

\begin{theorem}\label{BYak2018thm} Let $\mathbb{K}$, $R$ and $Z$ be as in (\ref{fieldgood}) and (\ref{Rhypo}), and assume that $Z$ is normal. Let $\mathrm{tr}$ be $\mathrm{tr}_{red}$ or $\mathrm{tr}_{st}$, respectively the reduced or the standard trace map from $R$ to $Z$.  
\begin{enumerate}
\item[(i)] The complement of the Azumaya locus of $R$ is the zero set of the ideal
$$ D_{n^2}(R/Z, \mathrm{tr}) \; := \; \langle \mathrm{det}\left[ \mathrm{tr}(y_i y_j \right]_{i,j = 1}^{n^2} \rangle , $$
as $(y_1, \ldots , y_{n^2})$ ranges through all $n^2$-tuples in $R^{n^2}$.
\item[(ii)] Let $\ell$ be a positive integer, and define $D_{\ell}(R/Z, \mathrm{tr})$ analogously to the case $\ell = n^2$. If $\ell > n^2$ then $D_{\ell}(R/Z, \mathrm{tr}) = 0$. If $\ell < n^2$ then the zero set $\mathcal{V}_{\ell}$ of $D_{\ell}(R/Z, \mathrm{tr}_{red})$ is
$$ \mathcal{V}_{\ell} \; = \; \{\mathfrak{m} \in \mathrm{Maxspec}(Z) \, : \, \sum_{V \in \mathrm{Sim}_{\mathfrak{m}}(R)}(\mathrm{dim}_{\mathbb{K}}(V))^2 < \ell \}. $$
\end{enumerate}
\end{theorem}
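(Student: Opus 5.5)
The approach is to pass to the fibres over maximal ideals of $Z$ and read everything off a single finite dimensional trace form. Fix $\mathfrak{m}\in\mathrm{Maxspec}(Z)$ and set $\overline{R}:=R/\mathfrak{m}R$, a finite dimensional $\mathbb{K}$-algebra. The trace $\mathrm{tr}:R\to Z$ (reduced or standard) descends modulo $\mathfrak{m}$ to a trace $\overline{\mathrm{tr}}:\overline{R}\to Z/\mathfrak{m}=\mathbb{K}$. Evaluating the generators of $D_\ell(R/Z,\mathrm{tr})$ at $\mathfrak{m}$ gives exactly the $\ell\times\ell$ Gram determinants of the bilinear form $(a,b)\mapsto\overline{\mathrm{tr}}(ab)$ on $\overline{R}$. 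Hence $\mathfrak{m}$ lies in the zero set of $D_\ell$ precisely when the rank of this form on $\overline{R}$ is less than $\ell$, and everything reduces to computing that rank.

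The first step is to show that the form factors through $\overline{R}/J(\overline{R})$. For the reduced trace I would use that $\mathrm{tr}_{red}$ comes from the embedding $R\hookrightarrow M_n(\overline{Q(Z)})$, so its specialisation at $\mathfrak{m}$ is the trace of an $n$-dimensional representation of $\overline{R}$. This is where normality of $Z$ enters: it makes $\mathrm{tr}_{red}(R)\subseteq Z$, and it makes the characteristic polynomial coefficients central and integral. A trace of a representation vanishes on nilpotent elements, so it kills $J(\overline{R})$. Writing $\overline{R}/J\cong\prod_{V\in\mathrm{Sim}_{\mathfrak{m}}(R)}M_{d_V}(\mathbb{K})$ with $d_V=\dim_{\mathbb{K}}V$, the specialised trace is then $\sum_V c_V\,\mathrm{Tr}_{M_{d_V}}$, where the multiplicity $c_V$ is the number of times $V$ occurs as a composition factor of the $n$-dimensional representation.

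The main obstacle is the characteristic issue. In characteristic $p$ a multiplicity $c_V$ could be divisible by $p$, in which case the block for $V$ would not contribute to the rank. I would first try to show that $c_V\neq 0$ in $\mathbb{K}$ for every $V$; failing that, I would insert hypotheses on the characteristic as in \cite{BY18}. On each block $M_{d}(\mathbb{K})$ the trace form $\mathrm{Tr}(ab)$ is nondegenerate of rank $d^2$. So if all $c_V$ are nonzero in $\mathbb{K}$, the rank is $\sum_V d_V^2$, which proves (ii) for $\mathrm{tr}_{red}$, namely $\mathcal{V}_\ell=\{\mathfrak{m}:\sum_V d_V^2<\ell\}$.

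For $\ell>n^2$ I would work generically. Over $Q(Z)$ the algebra $Q(R)$ is central simple of dimension $n^2$, so any $\ell>n^2$ elements of $R$ are $Q(Z)$-linearly dependent, and the Gram determinant vanishes. Hence $D_\ell=0$.

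Part (i) is then the case $\ell=n^2$. At every $\mathfrak{m}$ we have $\sum_V d_V^2\le\dim\overline{R}/J$, and $\sum_V d_V^2=n^2$ holds iff $\overline{R}$ is semisimple with $\sum_V d_V^2=n^2$. Since each $d_V\le n$ and $\sum_V c_Vd_V=n$, this forces $\overline{R}/J\cong M_n(\mathbb{K})$. The remaining point is $J=0$, i.e.\ $\overline{R}=M_n(\mathbb{K})$, so that $\mathfrak{m}\in\mathcal{A}$ by the description of $\mathcal{A}$ recalled before Theorem \ref{Azumthm}. I would argue this via Artin--Procesi: the existence of an $n$-dimensional simple module over $\mathfrak{m}$ already puts $\mathfrak{m}$ in $\mathcal{A}$, again by that description. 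For the standard trace, where $R$ is free or projective over $Z$ locally, the same computation works with multiplicity $\mathrm{rank}_Z R/n$ on each block, and on the Azumaya locus the form is the nondegenerate matrix trace form.
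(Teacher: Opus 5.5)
The paper gives no proof of this theorem; it is quoted from \cite{BY18}, so your sketch has to stand on its own. Several steps are fine: the reduction to the rank of $(a,b)\mapsto\overline{\mathrm{tr}}(ab)$ on $\overline R=R/\mathfrak mR$, the vanishing of $D_\ell$ for $\ell>n^2$, and the shape $\overline{\mathrm{tr}}_{red}=\sum_V c_V\mathrm{Tr}_V$ with $\sum_V c_Vd_V=n$. The last of these does need a real specialisation argument, for instance an $R$-stable lattice over a valuation ring of $\overline{Q(Z)}$ dominating $Z_{\mathfrak m}$.

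The genuine gap is that you never prove that \emph{every} $V\in\mathrm{Sim}_{\mathfrak m}(R)$ is a composition factor of this $n$-dimensional representation $W$, i.e.\ that $c_V\ge 1$. Without it you only get $\mathrm{rank}=\sum_{V:\,c_V\neq0\text{ in }\mathbb K}d_V^2\le\sum_V d_V^2$. That gives only the inclusion $\{\mathfrak m:\sum_V d_V^2<\ell\}\subseteq\mathcal V_\ell$. Your fallback of imposing a characteristic hypothesis does not help, because the problem here is $c_V=0$ in $\mathbb Z$, not in $\mathbb K$.

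The missing ingredient is Cayley--Hamilton. For $r\in R$ the reduced characteristic polynomial $\chi_r$ has coefficients in $Z$ (this is where normality really matters), and $\chi_r(r)=0$. Choose $r$ mapping to the central idempotent $e_V$ of the $V$-block of $\overline R/J(\overline R)$. If $V$ did not occur in $W$, then $r$ would act nilpotently on $W$, so $\chi_r\equiv t^n$ mod $\mathfrak m$. Hence $\bar r^n=0$ in $\overline R$, contradicting the fact that $\bar r$ acts as the identity on $V$.

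Your part (i) also uses this lemma silently, when you bound $\sum_V d_V^2$ over all $V$ via $\sum_V c_Vd_V=n$. In fact (i) for $\mathrm{tr}_{red}$ needs neither the lemma nor any characteristic assumption. The rank is at most $\sum_{c_V\ge1}d_V^2\le\bigl(\sum_V c_Vd_V\bigr)^2=n^2$, with equality only if there is a single $V$ with $c_V=1$ and $d_V=n$, which puts $\mathfrak m$ in $\mathcal A$. Conversely, on $\mathcal A$ the form is the matrix trace form, whose Gram determinant on matrix units is $\pm1$.

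Your worry about the characteristic is justified, and the option of proving $c_V\neq0$ in $\mathbb K$ in general cannot work. As printed, (ii) is false without a hypothesis such as $\mathrm{char}\,\mathbb K=0$ or $\mathrm{char}\,\mathbb K>n$; under such a hypothesis, $1\le c_V\le n$ finishes the argument once occurrence is known. For a counterexample, take $R=U(\mathfrak h)$, with $\mathfrak h$ the Heisenberg Lie algebra $[x,y]=z$ ($z$ central), over $\mathbb K$ of characteristic $p$. Then $Z=\mathbb K[x^p,y^p,z]$ and $n=p$. For $\mathfrak m=(x^p,y^p,z)$ the algebra $R/\mathfrak mR\cong\mathbb K[x,y]/(x^p,y^p)$ has only the trivial simple module, so $c_V=p$ and $\overline{\mathrm{tr}}_{red}=0$. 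Thus $\mathfrak m\in\mathcal V_1$ although $\sum_V d_V^2=1$.

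Likewise, your own multiplicity $\mathrm{rank}_Z R/n=n$ shows that $\mathrm{tr}_{st}=n\,\mathrm{tr}_{red}$. So on $\mathcal A$ the form is $n$ times the matrix trace form, not the nondegenerate matrix trace form you claim. Part (i) for $\mathrm{tr}_{st}$ therefore requires $\mathrm{char}\,\mathbb K\nmid n$; in the Heisenberg example $D_{n^2}(R/Z,\mathrm{tr}_{st})=0$.
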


Notice that if $R = H$ is a prime regular affine noetherian PI Hopf $\mathbb{K}$-algebra then it satsifies the hypotheses of the above result by Theorem \ref{xxthm3.9}(i). Exploring the consequences of the above result and others related to it for the representation theory of PI Hopf algebras has been and remains currently a very active field, too rich to attempt to describe here. We limit ourselves to listing a few relevant papers which the interested reader may follow up - see for example \cite{NTY17, BY24, MWY25, HMQW26}.

\bigskip

\section*{Acknowledgement} I thank Quanshui Wu and James Zhang for very helpful comments.

\end{document}